\numberwithin{equation}{section}
\theoremstyle{definition}
\newtheorem*{definition*}{Definition}
\newtheorem{definition}{Definition}
\theoremstyle{remark}
\newtheorem{remark}{Remark}
\newtheorem*{note*}{Note}
\newtheorem*{example*}{Example}
\newtheorem*{question*}{Question}
\newtheorem*{blank*}{}
\theoremstyle{plain}
\newtheorem{theorem}{Theorem}
\newtheorem*{theorem*}{Theorem}
\newtheorem*{corollary*}{Corollary}
\newtheorem{lemma}{Lemma}
\newtheorem*{lemma*}{Lemma}
\newtheorem{proposition}{Proposition}
\newtheorem*{proposition*}{Proposition}
\newtheorem*{conjecture*}{Conjecture}
\newtheorem*{assump*}{Assumption}
\newtheorem*{claim*}{Claim}
\numberwithin{theorem}{section}
\numberwithin{lemma}{section}
\numberwithin{proposition}{section}
\newcommand{\ba}{\mathbb A}
\newcommand{\bc}{\mathbb C}
\newcommand{\bn}{\mathbb N}
\newcommand{\bq}{\mathbb Q}
\newcommand{\br}{\mathbb R}
\newcommand{\bz}{\mathbb Z}
\newcommand{\ma}{\mathcal A}
\newcommand{\mf}{\mathcal F}
\newcommand{\ml}{\mathcal L}
\newcommand{\mo}{\mathcal O}
\newcommand{\mr}{\mathcal R}
\newcommand{\ms}{\mathcal S}
\newcommand{\fg}{\mathfrak g}
\newcommand{\fl}{\mathfrak l}
\newcommand{\fp}{\mathfrak p}
\newcommand{\fs}{\mathfrak s}
\newcommand{\msr}{\mathscr R}
\newcommand{\Rar}{\Rightarrow}
\newcommand{\rar}{\rightarrow}
\newcommand{\lrar}{\longrightarrow}
\newcommand{\dlim}{\varinjlim}
\newcommand{\geqs}{\geqslant}
\newcommand{\leqs}{\leqslant}
\newcommand{\wtilde}{\widetilde}
\newcommand*\colvec[1]{
        \global\colveccount#1
        \begin{pmatrix}
        \colvecnext
}
\def\colvecnext#1{
        #1
        \global\advance\colveccount-1
        \ifnum\colveccount>0
                \\
                \expandafter\colvecnext
        \else
                \end{pmatrix}
        \fi
}
 \DeclareMathOperator\Hom{Hom}
\DeclareMathOperator\Res{Res}   \DeclareMathOperator\Ind{Ind}
     \DeclareMathOperator\ad{ad}
\DeclareMathOperator\re{Re}   
\DeclareMathOperator\Tr{Tr} 
 \DeclareMathOperator\sgn{sgn}
\newcommand{\leftup}[2]{{^{#1}}\mspace{-2.5mu}#2}
\newcommand{\isoto}{\stackrel{\sim}{\lrar}}
\newcommand{\dd}{\mathrm{d}}
\newcommand{\cl}{\mathrm{cl}}
\newcommand{\SO}{\mathrm{SO}}
\newcommand{\OO}{\mathrm{O}}
\newcommand{\Sp}{\mathrm{Sp}}
\newcommand{\GL}{\mathrm{GL}}
\newcommand{\PGL}{\mathrm{PGL}}
\newcommand{\SL}{\mathrm{SL}}
\newcommand{\SU}{\mathrm{SU}}
\newcommand{\tildesr}{\wtilde{\msr}}
\begin{document}

\title{The Whittaker period formula on metaplectic $\SL_2$}
\author[ Yannan Qiu]{Yannan Qiu}
\address{Department of Mathematics and Statistics, University of Maine, Neville Hall, Orono, ME 04469.}
\email{yannan.qiu@gmail.com} 
\subjclass[2000]{11F67,11F70} 
\begin{abstract}
The Whittaker period formula on $\wtilde{\SL}_2(\ba_F)$ was previously established only when the base field $F$ is totally real. We present a new simple proof that works for all base number fields. Our local argument is uniform at every local place of $F$, based on the isometry property of quadratic Fourier transform and the estimates of matrix coefficients and Whittaker functions imposed by the unitarity of the local representations.

\end{abstract}
\maketitle

\setcounter{tocdepth}{1}
\tableofcontents

The Whittaker period formula on metaplectic $\SL_2$ generalizes Waldspurger's classical formula \cite{wald81} relating the Fourier coefficients of half integral weight modular forms to the central $L$-values of integral weight modular forms.

Let $F$ be a number field, $\ba:=\ba_F$ be the ring of $F$-adeles, and $\wtilde{\SL}_2(\ba)$ be the two-fold metaplectic cover of $\SL_2(\ba)$. For a genuine irreducible cuspidal automorphic representation $\sigma=\otimes \sigma_v$ of $\wtilde{\SL_2}(\ba)$ and a character $\psi$ of $\ba/F$, the global Whittaker period functional $\ell_{\sigma,\psi}$ is
\begin{equation}\label{gwf1}
\ell_{\sigma,\psi}(\varphi)=\int_{\ba/F}\varphi\smalltwomatrix{1}{x}{}{1}\psi(-x)\dd x,\quad \varphi\in \sigma.
\end{equation}

On the other hand, a local Whittaker period functional can be formally obtained by integrating matrix coefficients,
\begin{equation}\label{lwf1}
\ml_{\sigma_v,\psi_v}(\varphi_{1,v},\varphi_{2,v})=\int_{F_v}\big(\sigma_v\smalltwomatrix{1}{x_v}{}{1}\varphi_{1,v},\varphi_{2,v}\big)_{\sigma_v}\psi_v(-x_v)\dd x_v,\quad \varphi_{1,v}, \varphi_{2,v}\in \sigma_v.
\end{equation}
Here $(\, , \,)_{\sigma_v}$ denotes the local Hermitian pairing on $\sigma_v$ and, for compatibility, their product is required to be equal to the standard global Hermitian pairing $(\, ,\, )_\sigma$ on $\sigma$ given by
\[
(\varphi_1,\varphi_2)_\sigma=\int_{\SL_2(F)\backslash \SL_2(\ba)}\varphi_1(g)\overline{\varphi_2(g)}\dd g,\quad \varphi_1, \varphi_2\in \sigma.
\]
When the local representation $\sigma_v$ is square-integrable, the integral in \eqref{lwf1} is convergent for all $\varphi_{1,v},\varphi_{2,v}\in \sigma_v$. But if $\sigma_v$ is not square-integrable, a regularization is needed to make sense of the right hand side of (\ref{lwf1}). We choose to do so in terms of distribution theory: the bounded smooth function $(\sigma_v\smalltwomatrix{1}{x_v}{}{1}\varphi_{1,v},\varphi_{2,v})_{\sigma_v}$ defines a tempered distribution on the space of Bruhat-Schwartz functions $\ms(F_v)$ and the Fourier transform of this distribution can be shown to be represented by a smooth function $W_{\varphi_{1,v},\varphi_{2,v},\psi_v}(a_v)$ on $F_v^\times$. We define
\begin{equation}\label{intro_reg}
\int_{F_v}(\sigma_v\smalltwomatrix{1}{x_v}{}{1}\varphi_{1,v},\varphi_{2,v})_{\sigma_v}\psi_v(-x_v)dx_v:=W_{\varphi_{1,v},\varphi_{2,v},\psi_v}(-1).
\end{equation}
We show in Lemma ~\ref{m2wsl2} that the local period functional $\ml_{\sigma_v,\psi_v}$ defined in this way is nonzero if and only if $\Hom_{F_v}(\sigma_v,\psi_v)$ is nonzero, where $F_v$ is identified with the unipotent subgroup of $\wtilde{\SL}_2(F_v)$.

It is known that $\Hom_{F_v}(\sigma_v,\psi_v)$ is at most one dimensional. So when the global functional $\ell_{\sigma,\psi}\otimes \overline{\ell_{\sigma,\psi}}$ is nonzero, it must be the product of the local functionals $\ml_{\sigma_v,\psi_v}$ up to a constant. It is the task of the Whittaker period formula to connect them with an explicit constant. When doing so, the measures on $\ba$ and $\SL_2(\ba)$ need to be specified---we choose the Tamagawa measure, with respect to which both $\ba/F$ and $\SL_2(F)\backslash \SL_2(\ba)$ have volume $1$. For general metaplectic groups $\wtilde{\Sp}_{2n}$, a similar problem exists and a precise Whittaker period formula was conjectured by Lapid and Mao \cite{lapid_mao2015} for genuine non-exceptional cuspidal automorphic representations of $\wtilde{\Sp}_{2n}(\ba)$.

\textbf{The main purpose} of this article is to give a new simple proof of the Whittaker period formula on $\wtilde{\SL}_2$ for an arbitrary base number field. Previously, the formula is only established when the base field is totally real, due to technical issues at archimedean places. To state the formula, we distinguish Weil representations of $\wtilde{\SL}_2(\ba)$, which are small representations consisting of elementary theta series, from genuine cuspidal automorphic representations that are orthogonal to them. 
\begin{theorem*}
Let $\sigma$ be a genuine irreducible cuspidal automorphic representation of $\wtilde{\SL}_2(\ba)$ that is orthogonal to elementary theta series. Suppose that the Whittaker functional $\ell_{\sigma,\psi}$ is nonzero and write $\pi=\Theta_{\wtilde{\SL}_2\times \PGL_2}(\sigma,\psi)$ for the global theta lift of $\sigma$ to $\PGL_2(\ba)$ with respect to $\psi$. Then for decomposable vectors $\varphi_i=\otimes \varphi_{i,v}\in \sigma$, there is
\[
\ell_{\sigma,\psi}(\varphi_1)\overline{\ell_{\sigma,\psi}(\varphi_2)}=\frac{1}{2}\cdot \frac{L(\frac{1}{2},\pi)\zeta_F(2)}{L(1,\pi, \ad)}\prod_v \frac{L(1,\pi_v,\ad)}{L(\frac{1}{2},\pi_v)\zeta_{F_v}(2)}\int_{F_v}\big(\sigma_v\smalltwomatrix{1}{x_v}{}{1}\varphi_{1,v},\varphi_{2,v}\big)_{\sigma_v}\psi_v(-x_v)dx_v.
\]
\end{theorem*}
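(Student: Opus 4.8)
The plan is to transport both sides onto $PGL_2\cong\gpso(V)$, where $V$ is the three-dimensional split quadratic space of discriminant one, through the theta correspondence for the dual pair $(\wtilde{SL}_2,O(V))$; to prove the analogous decomposition there, where on $PGL_2$ it reduces to a Rankin--Selberg unfolding together with the Rallis inner product formula; and then to carry each \emph{local} factor back to $\wtilde{SL}_2$ via the Weil representation. Fix the Weil representation $\omega_\psi$ of $\wtilde{SL}_2(\ba_F)\times O(V)(\ba_F)$ on $\mathcal S(V(\ba_F))$, realize $\sigma$ as an automorphic image of $\omega_\psi$, and for $\varphi\in\sigma$, $\phi\in\mathcal S(V(\ba_F))$ write $\theta_\phi(\varphi)(h)=\int_{[\wtilde{SL}_2]}\varphi(g)\,\overline{\theta(g,h;\phi)}\,dg$ for the theta lift; put $n(x)=\smalltwomatrix{1}{x}{}{1}$. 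Since $\sigma$ is cuspidal and orthogonal to elementary theta series, the theta lift $\pi=\Theta(\sigma,\psi)$ is cuspidal on $PGL_2(\ba_F)$ (first occurrence), hence $\psi$-generic; the hypothesis $\ell_{\sigma,\psi}\ne 0$ forces it to be nonzero.

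\emph{Global transfer.} First I would compute the $\psi$-Whittaker coefficient of $\theta_\phi(\varphi)$ along the unipotent radical $U$ of a Borel of $PGL_2$: in the model of $\omega_\psi$ mixed along the isotropic line fixed by $U$, the Fourier expansion of the theta kernel in the $U$-variable collapses the lattice sum to one $U(F)$-orbit of non-isotropic vectors, and unfolding the remaining integral over $[\wtilde{SL}_2]$ along $N=\{n(x)\}$ yields the factorization
\[
W^{\psi}_{\theta_\phi(\varphi)}(e)=\ell_{\sigma,\psi}(\varphi)\cdot\prod_v\langle\phi_v,w_{\psi_v}\rangle ,
\]
with $w_{\psi_v}$ an explicit $(N,\psi_v)$-equivariant vector in the relevant Fourier--Jacobi piece of $\omega_{\psi_v}$ and each local pairing finite. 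Choosing $\phi=\otimes_v\phi_v$ with all $\langle\phi_v,w_{\psi_v}\rangle\ne 0$ then expresses the left side of the theorem through the $PGL_2$-Whittaker period of $\theta_{\phi_1}(\varphi_1),\theta_{\phi_2}(\varphi_2)$. On $PGL_2$, the Rankin--Selberg unfolding of the Petersson pairing expresses $W^{\psi}_{f_1}(e)\overline{W^{\psi}_{f_2}(e)}$ through $\langle f_1,f_2\rangle_{PGL_2}$, the factor $\zeta_F(2)/L(1,\pi,\ad)$, and a product of local Whittaker-to-inner-product ratios for the $\pi_v$; and the Rallis inner product formula evaluates $\langle\theta_{\phi_1}(\varphi_1),\theta_{\phi_2}(\varphi_2)\rangle_{PGL_2}$ as a doubling integral against a Siegel--Eisenstein series on the doubled metaplectic group, equal to $L(\tfrac12,\pi)$ times normalized local doubling zeta integrals $Z^{\natural}_v$. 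Assembling these gives precisely the global constant $L(\tfrac12,\pi)\zeta_F(2)/L(1,\pi,\ad)$ in front of an Euler product of purely local quantities.

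\emph{Local matching, and the main obstacle.} It remains to check, place by place, that the local factor built from $Z^{\natural}_v$, the local Whittaker/inner-product ratio for $\pi_v$, and $\langle\phi_{1,v},w_{\psi_v}\rangle^{-1}\,\overline{\langle\phi_{2,v},w_{\psi_v}\rangle}^{-1}$ equals
\[
\frac{L(1,\pi_v,\ad)}{L(\tfrac12,\pi_v)\zeta_{F_v}(2)}\int_{F_v}\big(\sigma_v\smalltwomatrix{1}{x_v}{}{1}\varphi_{1,v},\varphi_{2,v}\big)_{\sigma_v}\psi_v(-x_v)\,dx_v .
\]
Realizing $\sigma_v$ inside $\omega_{\psi_v}$, the matrix coefficient $x\mapsto(\sigma_v(n(x))\varphi_{1,v},\varphi_{2,v})_{\sigma_v}$ is a bilinear Weil-representation transform of $\phi_{1,v}$ and $\overline{\phi_{2,v}}$, and unravelling $Z^{\natural}_v$ through the same transform produces the asserted shape; independence of the auxiliary choices then holds formally, and the standard unramified computation makes the local factor equal $1$ at almost all $v$, so the Euler product converges. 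The genuine difficulty is that $\int_{F_v}(\sigma_v(n(x))\varphi_{1,v},\varphi_{2,v})_{\sigma_v}\psi_v(-x)\,dx$ is \emph{not} absolutely convergent --- the matrix coefficient is merely bounded, not decaying, along $N(F_v)$ --- so the displayed equality must be read as an identity of tempered distributions in $\psi_v$, and this distribution must be shown to be well defined and to depend continuously on $\varphi_{1,v},\varphi_{2,v}$. This is exactly where the isometry property of the quadratic Fourier transform and the estimates for Bruhat--Schwartz functions of \S\ref{qftsec} are used: they control $\omega_{\psi_v}(n(x))\phi_v$ and its quadratic Fourier transform uniformly in $x$, hence supply the regularization, the continuity, and the absolute convergence of the smoothed integrals that appear after the transform is applied. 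The delicate places are the archimedean ones --- in particular the complex places, which are exactly those left open in the relative-trace-formula approach --- and it is there that most of the work lies; granting the local identity, the Euler product over all $v$, compared with the global transfer, yields the theorem.
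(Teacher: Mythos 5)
Your overall architecture is the paper's: transfer everything to $PGL_2$ through the theta correspondence, obtain the $PGL_2$ Whittaker decomposition (equivalently $c_\pi=1$) by Rankin--Selberg unfolding against an Eisenstein series, use the Rallis inner product formula to move inner products across the lift, and regularize the divergent local $\wtilde{SL}_2$ integral distributionally via the isometry of the quadratic Fourier transform. But your global transfer step contains a genuine error. The $\psi$-Whittaker coefficient of $\theta_\phi(\varphi)$ does \emph{not} factor as $\ell_{\sigma,\psi}(\varphi)\cdot\prod_v\langle\phi_v,w_{\psi_v}\rangle$ with the second factor depending only on $\phi$. The correct unfolding (Lemma~\ref{transfer_global} of the paper) gives
\[
\ell_{\pi,\psi_{-2}}\big(\Theta(\phi,\varphi)\big)=\int_{N(\ba)\backslash SL_2(\ba)}\overline{W_{\varphi,\psi}(g)}\,\omega(g)\widehat{\phi}(e_0;0,1)\,dg=\prod_v J_v(\varphi_v,\phi_v),
\]
and each $J_v$ involves the full local Whittaker function $W_{\varphi_v,\psi_v}$ on $N(F_v)\backslash SL_2(F_v)$, not just its value at the identity; the $\varphi$-dependence cannot be pulled out of the $g$-integral, so no choice of $(N,\psi_v)$-equivariant vector $w_{\psi_v}$ makes your displayed identity true. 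Consequently the left-hand side of the theorem is not obtained from the $PGL_2$ Whittaker period by dividing out a constant depending only on $\phi$, and the ``formal'' cancellation of auxiliary choices that your local matching relies on has nothing to cancel against.

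The way the paper closes this gap is to decompose \emph{both} global Whittaker functionals into products of normalized local functionals with a priori unknown proportionality constants $c_\sigma$ and $c_\pi$, and then to prove $c_\sigma=c_\pi$: the same factorizable quantity $\prod_v J_v(\varphi_{1,v},\phi_{1,v})\overline{J_v(\varphi_{2,v},\phi_{2,v})}$ appears on the global side (Lemma~\ref{transfer_global}) and on the local side (Lemma~\ref{transfer_local}), so it cancels in the comparison, and $c_\pi=1$ is supplied by Proposition~\ref{compare_gl2ip}. Note also that the local step is not merely ``a bilinear Weil-representation transform'': the proof of Lemma~\ref{transfer_local} needs, besides the normalized local theta lifts coming from the Rallis inner product formula, the Baruch--Mao identity of Lemma~\ref{sl2t_wf2p} expressing the $\sigma_v$-inner product through Whittaker functions (this is where the local constants $c_{\sigma_v,\psi_v}$ enter), and only then does Proposition~\ref{isometry-identity} (via Lemma~\ref{identity-mbintegral}) convert the regularized integral of the $\sigma_v$-matrix coefficient into an integral of Whittaker functions. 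Your proposal correctly identifies the analytic difficulty and the role of the quadratic Fourier transform, but as written the bookkeeping between the global and local transfers does not match, and repairing it leads essentially to the paper's comparison of constants rather than to a direct evaluation of $\ell_{\sigma,\psi}(\varphi_1)\overline{\ell_{\sigma,\psi}(\varphi_2)}$.
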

\noindent Note that when the global period functional $\ell_{\sigma,\psi}$ is nonvanishing, the theta lift $\Theta_{\wtilde{\SL}_2\times \PGL_2}(\sigma,\psi)$ is automatically nonzero and irreducible. Note also that the normalized local factors on the right hand side is equal to $1$ for almost all $v$, whence the infinite product is actually a finite product.\\

In general, the proof of a period formula needs two types of ingredients: (I) an algebraic tool providing the context in which the global problem is reduced to a local problem such as local matching or a local identity, (II) an analytic tool that actually proves the local identity. Let us make a few remarks:\\

(a) When $F$ is totally real, the above period formula can be deduced from Theorem 4.1 in Baruch and Mao's paper \cite{baruch_mao07}, although the regularization of local integrals is not discussed there. In their work, the algebraic context is provided by the relative trace formula while the local analytic ingredient is a local identity between the relative Bessel distribution on $\GL(2)$ and the standard Bessel distribution on $\wtilde{\SL}_2$ over nonarchimedean and real fields \cite{baruch_mao03, baruch_mao05}. The same approach for the local identity over the complex field is technically more difficult; as Baruch told the author, the regularity problem of the two mentioned distributions at the complex place has not been worked out yet and he currently has an answer only for the standard Bessel distribution on $\SL_2(\bc)$. This accounts for the restriction of $F$ being totally real in \cite{baruch_mao07}.\\

(b) Assuming that $F$ is totally real and that the local components of $\sigma$ at the real places of $F$ are square-integrable, the above formula can also be deduced in the algebraic context provided by the descent method of Ginzburg-Rallis-Soudry. Under this assumption, the local integrals at archimedean places do not need regularization; with the nonarchimedan local integrals regularized via the notatoin of stable integrals, Lapid-Mao have written a sequence of nice papers \cite{lapid_mao2013, lapid_mao2014, lapid_mao2014b, lapid_mao2016}, culminating in the Whittaker period formula for non-exceptional cuspidal automorphic representations of $\wtilde{\Sp}_{2n}$. The key local analytic ingredient in their argument is a local identity of iterated integrals. However, at the real and complex places, the convergence issue of integrals are more serious and it is not clear how to prove the local identity when $\sigma_v$ is non-square-integrable, even on $\wtilde{\SL}_2$. This accounts for the assumption made on $F$ and the archimedean components of $\sigma$ in \cite{lapid_mao2016}.\\

Our proof of the Whittaker period formula on $\wtilde{\SL}_2$ uses the theta correspondence between $\wtilde{\SL}_2$ and $\PGL_2$ to set up the algebraic context. The validity of the local integral transform we use relies only on the estimates of matrix coefficients and Whittaker functions derived from the unitarity of local representations. Because such estimates are uniform over nonarchimedean and archimedean local fields, our local argument is accordingly uniform, yielding the formula unconditionally in a very pleasant manner.

Here is the outline of the argument: With theta correspondence, we transfer the global Whittaker period integral of $\pi$ to a product of local integrals over $\wtilde{\SL}_2(F_v)$, and, transfer (via the inner product formula concerning the lifting from $\wtilde{\SL}_2$ to $\PGL_2$) the local Whittaker period integral of matrix coefficients of $\pi_v$ to a local integral over $\SL_2(F_v)$. We then compare the two types of local integrals in the context of $\wtilde{\SL}_2(F_v)$---they are shown to be proportional to each other. The local proportionality constant is determined by the local inner product on $\sigma_v$ and the choice of the local Whittaker functional on $\sigma_v$, and it is no wonder that the product of these constants is essentially the constant $c_\sigma$ showing up in the Whittaker period formula of $\sigma$. This shows that the number $c_\sigma$ fits into the Whittaker period formula of $\pi$. On the other hand, one can prove the Whittaker period formula of $\pi$ directly and get the constant $c_\pi=\frac{1}{2}$. Therefore , $c_\sigma=c_\pi$ takes value $\frac{1}{2}$ and we obtain the Whittaker period formula of $\sigma$.

In the key step of comparing the two types of local integrals, one technically needs an analytic ingredient to transform one integral into the other. This ingredient is Lemma \ref{identity-mbintegral}, about integrating the matrix coefficient of $\sigma_v$ against the quadratic Fourier transform of a Bruhat-Schwartz function along the unipotent subgroup of $\SL_2(F_v)$. It is an application of the isometry property of quadratic Fourier transform (c.f. Lemma ~\ref{isometry-identity}).We note that for such an isometry to be applicable to $\sigma_v$, the matrix coefficients and Whittaker functions of $\sigma_v$ need to asymptotically behave well. It turns out that the asymptotic estimates imposed by the unitarity of $\sigma_v$ are just sufficient. We have observed the following local mechanism:
\begin{itemize}
\item[(i)] The regularized local period integral (\ref{intro_reg}) can be used as a canonical choice of the local Whittaker functional. With it, the asymptotic estimate of Whittaker functions can be deduced from the asymptotic estimate of matrix coefficients (See the proof of Lemma ~\ref{sl2wbound}). 
\item[(ii)] The inverse integral transform of the local period integral, which is a quadratic Fourier transform in this article, behaves well (See Lemma ~\ref{lemmaw2m}, ~\ref{identity-mbintegral}) when the matrix coefficients and the Whittaker functions satisfy the asymptotic estimates imposed by unitarity.
\end{itemize}
For general reductive or covering groups, one often makes extra effort to derive the asymptotic estimates of local period functionals, which is related to Plancherel type theorems. Our view is that matrix coefficients are fundamental objects in local harmonic analysis and their asymptotic estimate or expansion can naturally lead to an according estimate of general local period functionals. \textbf{The secondary purpose} of this article is to demonstrate such a principle on $\wtilde{\SL}_2$, which already shows some aspects of the general situation. 

Parallel to the period transfer investigated in this article, one may transfer the global Whittaker period of $\sigma$ and the local Whittaker integral of matrix coefficient of $\sigma_v$ to $\SO_2$-integrals of $\SO_3$-representations, via the theta correspondence between $\wtilde{\SL}_2$ and $\SO_3$; for example, \cite{mao2012} considers the the case of anisotropic $\SO_3$. In general, the Fourier-Jacobi period of a genuine automorphic representation of $\wtilde{\Sp}_{2n}$ is related to the Gross-Prasad type of period of automorphic representations of $\SO_m$.

In the end of this introduction, we mention that the proof in this article is nearly self-contained, except (standard facts about $\GL_2$, $\wtilde{\SL}_2$ representations and) the inner product formula concerning the global theta lifting from $\wtilde{\SL}_2$ to $\PGL_2$ (c.f. Proposition \ref{prop_ip}), the proof of which is available in \cite{qiu14}, p.6714-p.6719.\\

\noindent \textbf{Acknowledgement} This work was done in 2013 during the author's visit to National University of Singapore. The author would like to thank NUS for providing great working environment and to thank Prof. Gan Wee Teck for helpful conversations. The author also thanks the referee for a careful reading and several comments that help improve the exposition of the paper.

\setcounter{section}{-1}
\section{Notations}\label{notation}

Let $F$ be a number field and $\ba:=\ba_F$ be the ring of $F$-adeles. When $v$ is a  non-archimedean place of $F$, let $\mo_{F_v}$ denote the ring of $v$-adic integers in $F_v$ and $\varpi_v$ be a uniformizer of $\mo_{F_v}$. For a non-trivial character $\psi_v$ of $F_v$, define its conductor to be the largest $\varpi^n_v\mo_{F_v}$ on which $\psi_v$ is trivial. For a quasi-character $\chi_v$  of $F_v^\times$, define its conductor $\mathrm{Cond}(\chi_v)$ to be $\mo_{F_v}$ if $\chi_v$ is unramified, and to be $\varpi^n\mo_{F_v}$ if $\chi_v$ is ramified and $ n$ is the smallest positive integer such that $\chi_v$ is trivial on $1+\varpi^n_v\mo_{F_v}$.

Let $\psi:\ba/F\rar S^1$ be a non-trivial character. At each place $v$ of $F$, let $da_v$ be the self-dual Haar measure of $F_v$ with respect to $\psi_v$, then $da=\prod_v da_v$ is the Tamagawa measure on $\ba$ and does not depend on $\psi$. Write $d^\times a_v=\frac{da_v}{|a_v|}$ and $d^\times a=\prod_v d^\times a_v$. The Tamagawa measure on $\ba^\times$ is
\[
d^\ast a=\frac{1}{\Res_{s=1}\zeta_F(s)}\prod_v \zeta_{F_v}(1)d^\times a_v.
\]
We usually decompose $d^\ast a=\prod_v d^\ast a_v$, with $d^\ast a_v$ such that $\int_{\mo_{F_v}^\times}1 d^\ast a_v=1$ for almost all $v$.

For $\delta\in F^\times$, write $\psi_\delta=\psi(\delta\cdot)$ for the $\delta$-twist of $\psi$ and $\chi_\delta$ for the quadratic character $<\delta,\cdot>$ on $\ba^\times$, where $<,>$ denotes the Hilbert symbol.\\

Let $k$ be a local field of characteristic zero and $m\in \bn$. Define the norm $|\cdot|_k$ by $d(ax)=|a|_kdx$, where $dx$ is any Haar measure. The subscript $k$ will be dropped if the context is clear. Note specifically that the complex norm is $|a|_\bc=a\bar{a}$ ($a\in \bc$), which is different from the usual norm.

For a quasi-character $\chi:k^\times \rar \bc^\times$, define its exponent $e(\chi)$ to be the real number such that $\chi\overline{\chi}=|\cdot|^{2e(\chi)}$. Write the unitary part of $\chi$ as $\chi_0=\chi|\cdot|^{-e(\chi)}$.

The space  $\ms(k^m)$ of Bruhat-Schwartz functions on $k^m$ consists of smooth functions all of whose derivatives are rapidly decreasing if $k$ is archimedean, or locally constant compactly supported functions if $k$ is non-archimedean. Its topology is defined below:
\begin{itemize}
\item[(i)] When $k=\br$,  consider semi-norms $\Vert\cdot\Vert_{p,q}$, where $\alpha, \beta\in (\bn\cup\{0\})^m$ are multi-indices and 
\[
\Vert f\Vert_{\alpha,\beta}=\sup_{x\in \br^m} \Big|x^{\beta}\cdot \frac{\partial^{\alpha}f}{\partial x^{\alpha}}\Big|.
\]
The topology on $\ms(\br^m)$ is such that $f_n\rar f$ if and only if $\Vert f_n-f\Vert_{\alpha,\beta}\rar 0$ for all $\alpha,\beta$.
\item[(ii)] When $k=\bc$, identify $\ms(\bc^m)$ with $\ms(\br^{2m})$ and give the according topology.
\item[(iii)] When $k$ is nonarchimedean, let $\ms_n(k^m)$ denote the space of functions $f$ on $k^m$ satisfying (a) $f(x_1)=f(x_2)$ if $|x_1-x_2|\leqs \frac{1}{2^n}$ and (b) $f(x)=0$ if $|x|>2^n$, then $\ms_n(k^m)$ is finite dimensional for each $n$ and $\ms(k^m)=\dlim_n \ms_n(k^m)$ is an inductive limit. Give each $\ms_n(k^m)$ the standard topology and $\ms(k^m)$ the accordingly inductive limit topology.
\end{itemize}
For a vector space $V$ over a number field $F$, define $\ms(V_\ba)$ as the restricted tensor product of $\ms(V_{F_v})$ with respect to the characteristic functions $1_{V(\mo_{F_v})}$.

We fix a few terms. A smooth function on an open subset $U$ of $k^m$ is one that can be differentiated for infinitely many times if $k$ is archimedean or one that is locally constant if $k$ is nonarchimedean. $C^\infty(U)$ denotes the space of smooth functions on $U$ and $C^\infty_c(U)$ denotes the subspace of compactly supported smooth functions on $U$. Note that  $C^\infty_c(k^m)=\ms(k^m)$ if $k$ is nonarchimedean.

\section{Fourier Transform and Quadratic Fourier Transform}\label{qftsec}

Let $k$ be a local field of characteristic zero and choose a uniformizer $\varpi$ if $k$ is nonarchimedean. Let $\psi:k\rar \mathrm{S}^1$ be a nontrivial character and $dx$ be the according self-dual Haar measure of $k$. The main results in this section are Proposition ~\ref{isometry-identity} and ~\ref{isometry-identity-extra}, about the isometry of quadratic Fourier transform.

\subsection{Fourier Transform}

Fourier transform $\mf$ is originally defined on $L^1(k)$,
\begin{equation}
f(x)\overset{\mf}{\lrar} \hat{f}(x):=\int_k f(y)\psi(xy)dy.
\end{equation}
It satisfies the following standard properties:
\begin{itemize}
\item[(i)]  $\hat{f}$ is uniformly continuos.
\item[(ii)] $\underset{x\rar \infty}\lim \widehat{f}(x)=0$.
\item[(iii)] $\mf$ is a homeomorphism from $\ms(k)$ onto $\ms(k)$.
\item[(iv)] $\hat{\hat{f}}(x)=f(-x)$ if $\hat{f}\in L^1(k)$ and $f$ is continuous.
\item[(v)] If $f_i\in L^1(k)\cap L^2(k)$ ($i=1,2$), then $\hat{f}_i\in L^2(k)$ and $\int_k \hat{f}_1(x)\hat{f}_2(-x)dx=\int_k f_1(x)f_2(x)dx$.
\end{itemize}

\indent Let $\ms^\prime(k)$ denote the space of continuous linear functionals $T:\ms(k)\rar \bc$. Note that if $k$ is nonarchimedean, then a linear functional $T$ on $\ms(k)$ is automatically continuous with respect to the inductive limit topology on $\ms(k)$ and $\ms^\prime(k)$ is simply the space of linear functionals on $\ms(k)$. We call elements in $\ms^\prime(k)$ the tempered distributions on $k$.

Fourier transform can be extended to $\ms^\prime(k)$ by setting $\mf T=T\mf$. When $k$ is archimedean, a constant coefficient differential operator $D$ on $\ms(k)$ can also be extended to $\ms^\prime(k)$ by setting $DT=TD$.

\begin{definition}\label{defdistribution}
A functional $T\in \ms^\prime(k)$ is said to be represented over an open subset $U\subset k$ by a function $f\in L^1_{loc}(U)$ if $T(\phi)=\int_{U}f(x)\phi(x)dx$ for all $\phi\in C^\infty_c(U)$. Obviously, if $f$ is required to be continuous, then it is unique once it exists.

A functional $T\in \ms^\prime(k)$ is said to be supported on a closed subset $E$ of $k$ if $T(\phi)=0$ for all $\phi\in C^\infty_c(E^c)$, where $E^c$ refers to the complement of $E$ in $k$.
\end{definition}

Denote by $\delta_a$ the Dirac distribution supported at $a\in k$: $\delta_a(\phi)=\phi(a), \phi\in \ms(k)$. It is tempered.

\begin{lemma}\label{pointsupport}
Suppose $T\in \ms^\prime(k)$ is supported at a single point $a\in k$, then it is a multiple of $\delta_a$ if $k$ is nonarchimedean or a finite linear combination of derivatives of $\delta_a$ if $k$ is archimedean.
\end{lemma}
\begin{proof}
When $k$ is archimedean, the lemma follows from \cite[Theorem 2.3.4]{hormander03}. When $k$ is nonarchimedean, choose one function $f\in \ms(k)$ with value $1$ at $a$. Then for any $\phi\in \ms(k)$, $\phi-\phi(a)f$ is zero near $a$, whence $T(\phi)=T(\phi(a)f)+T(\phi-\phi(a)f)=\phi(a)T(f)$ and therefore $T=T(f)\delta_a$.
\end{proof}

\subsection{Quadratic Fourier Transform}

Quadratic Fourier transform $\mf_2$ is defined on $L^1(k)$ by
\begin{equation}\label{quadraticf}
\mf_2 f(t)=\int_k f(x)\psi(tx^2)dx.
\end{equation}
When $f(x)$ is an odd function, $\mf_2f=0$. When $f(x)$ is an even function, $\mf_2$ can be related to $\mf$ in the following way: associates to $f$ a function $\wtilde{f}(x)$ supported on ${k^\times}^2\cup\{0\}$,
\begin{equation*}
\wtilde{f}(x)=
\begin{cases}
f(\sqrt{x}), &x\in {k}^2,\\
0, &x\not\in {k}^2,
\end{cases}
\end{equation*}
then
\begin{equation}\label{ff}
\mf_2f=2|2|_k^{-1}\mf(\wtilde{f}|\cdot|^{-1/2}).
\end{equation}

Due to the quadratic oscillation $tx^2$ in the defining equation (\ref{quadraticf}), the analytic property of quadratic Fourier transform differs from that of Fourier transform. Formally applying the $L^2$-isometry property of Fourier transform, one may formally get the according isometry property of quadratic Fourier transform, namely,
\begin{equation}\label{quadratici}
\int_k \mf_2f_1(t)\mf_2f_2(-t)dt=2|2|_k^{-1}\int_k f_1(x)f_2(x)|x|^{-1}dx.
\end{equation}
However, the analytic condition for this isometry property needs to be carefully examined.

For our intended use of (\ref{quadratici}), the function $f_2$ is in $\ms(k)$; accordingly, $\mf_2 f_2(t)$ is roughly of size $f_2(0)|t|^{-\frac{1}{2}}$ near infinity and is not square-integrable if $f_2(0)\neq 0$.  In order to have (\ref{quadratici}), we impose the following natural conditions on $f_1$:
\begin{itemize}
\item $f_1, f_1|\cdot|^{-1}\in L^1(k)$,
\item $\mf_2 f_1\in L^2(k)$ and $\mf_2 f_1|\cdot|^{-1/2}\in L^1(k)$.
\end{itemize}
The task of this section is to argue that under these conditions on $f_1$, (\ref{quadratici}) holds for all $f_2\in \ms(k)$.

\subsubsection{The asymptotic approximation of $\widehat{\phi \chi}$}\label{subsec_asym}

Suppose $\phi\in \ms(k)$ and $\chi\in \Hom(k^\times, \bc^\times)$ with $e(\chi)>-1$. We derive in this subsection the leading term and the according error bound in the asymptotic expansion of $\widehat{\phi\chi}(x)$. A full asymptotic expansion can be worked out but is not pursued here.

Our tool is Tate's local zeta integral \cite[Section 2]{tate1967}. The zeta integral
\[
Z(\phi,\chi,s)=\int_{k^\times} \phi(x)\chi(x)|x|^sd^\times x,\quad s\in \bc,
\]
is absolutely convergent when $\re(s)>-e(\chi)$, has meromorphic continuation to $\bc$, and satisfies
\begin{equation}\label{localfe0}
\frac{Z(\hat{\phi},\chi^{-1},1-s)}{L(1-s,\chi^{-1})}=\epsilon(s,\chi,\psi)\frac{Z(\phi,\chi,s)}{L(s,\chi)}.
\end{equation}
Here $\epsilon(s,\chi,\psi)$ is of the form $ab^s$ and is never zero. We may rewrite the functional equation as
\begin{equation}\label{localfe}
Z(\hat{\phi},\chi^{-1},\psi^{-1})=\gamma(s,\chi,\psi)Z(\phi,\chi,s),
\end{equation}
with $\gamma(s,\chi,\psi)=\epsilon(s,\chi,\psi)\cdot \frac{L(1-s,\chi^{-1})}{L(s,\chi)}$

$\ $\\
\indent We view $\widehat{\phi\chi}(x)$ as a local zeta integral,
\begin{equation}\label{zeta1}
\widehat{\phi\chi}(x)=\int_k \phi(y)\chi(y)\psi(xy)dy=Z(\phi\psi_x,\chi,s)|_{s=1}.
\end{equation}

Suppose $x\neq 0$. In the region $\re(s)+e(\chi)<1$, we apply (\ref{localfe}) to get the following expression
\begin{align}\label{zeta2}
\nonumber Z(\phi\psi_x,\chi,s)&=\gamma(1-s,\chi^{-1},\psi^{-1})Z(\widehat{\phi\psi_x},\chi^{-1},1-s)\\
\nonumber &=\gamma(1-s,\chi^{-1},\psi^{-1})\int_k \hat{\phi}(x+y)\chi^{-1}(y)|y|^{-s}dy\\
&=\frac{\gamma(1-s,\chi^{-1},\psi^{-1})}{\chi(x)|x|^s}\cdot \int_k \hat{\phi}(y)\chi^{-1}(\frac{y}{x}-1)\big|\frac{y}{x}-1\big|^{-s}dy.
\end{align}

When $k$ is nonarchimedean, (\ref{zeta2}) suffices for deriving an asymptotic expansion when $|x|$ is large.
\begin{lemma}\label{zetapadic}
Suppose $k$ is nonarchimedean. If $\mathrm{Supp}(\widehat{\phi})\subseteq x\cdot \big(\mathrm{Cond}(\chi)\cap \varpi \mo_k\big)$, then \[
Z(\phi\psi_x,\chi,s)=\chi(-1)\phi(0)\gamma(1-s,\chi^{-1},\psi^{-1})\chi^{-1}(x)|x|^{-s}.
\]
\end{lemma}
\begin{proof}
When $\mathrm{Supp}(\widehat{\phi})\subseteq x\cdot \big(\mathrm{Cond}(\chi)\cap \varpi \mo_k\big)$, there is $\chi(\frac{y}{x}-1)=\chi(-1)$ and $|\frac{y}{x}-1|=1$, whence
\[
\int_k \hat{\phi}(t)\chi^{-1}(\frac{y}{x}-1)\big|\frac{y}{x}-1\big|^{-s}dt=\int_k \hat{\phi}(t)dt=\phi(0).
\]
By (\ref{zeta2}), in the region $\re(s)<1-e(\chi)$, there is
\[
Z(\phi\psi_x,\chi,s)=\phi(0)\gamma(1-s,\chi^{-1},\psi^{-1})\chi^{-1}(x)|x|^{-s}.
\]
Since both sides are meromorphic in $s$, this equality must hold on the whole complex $s$-plane. 
\end{proof}

When $k$ is archimedean, the integral in (\ref{zeta2}) may diverge at $y=x$ if $\phi$ is nonvanishing at $y=x$ and $\re(s)+e(\chi)\geqs 1$. If this is the situation one faces with, it is good to localize the integral at $y=x$ and apply the functional equation to the localized integral again. We proceed as follows.

Suppose $k$ is archimedean and choose a non-negative smooth cut-off function $\alpha(t)$ on $k$ satisfying
\begin{equation*}
\alpha(t)=\begin{cases}
1, &|t|\leqs \frac{1}{16},\\
0, &|t|>\frac{1}{8}.
\end{cases}
\end{equation*}
By the triangle inequality, the supports of $\alpha(\frac{y}{x})$ and $\alpha\big(\frac{y}{x}-1\big)$ are disjoint. We then write
\begin{equation}
\int_k \hat{\phi}(y)\chi^{-1}(\frac{y}{x}-1)\big|\frac{y}{x}-1\big|^{-s}dy=Y_1(x ; s)+Y_2(x ; s)+Y_3(x ; s),
\end{equation}
with
\begin{align}
Y_1(x ; s)=&\int_k \chi^{-1}(\frac{y}{x}-1)\big|\frac{y}{x}-1\big|^{-s}\cdot \hat{\phi}(y)\alpha\big(\frac{y}{x}\big)dy,\label{y1}\\
Y_2(x ; s)=&\int_k \chi^{-1}(\frac{y}{x}-1)\big|\frac{y}{x}-1\big|^{-s}\cdot \hat{\phi}(y)\alpha\big(\frac{y}{x}-1\big)dy,\label{y2}\\
Y_3(x ; s)=&\int_k \chi^{-1}\big(\frac{y}{x}-1\big)\big|\frac{y}{x}-1\big|^{-s}\cdot \hat{\phi}(y)\Big(1-\alpha\big(\frac{y}{x}\big)-\alpha\big(\frac{y}{x}-1\big)\Big)dy. \label{y3}
\end{align}

The integrals in (\ref{y1}) and (\ref{y2}) are always convergent and $Y_1(x ; s), Y_3(x ; s)$ are holomorphic in $s$. $Y_2(s)$ is the localized integral that may have a trouble when $\re(s)+e(\chi)\geqs 1$. We are mainly interested in the region $\re(s)+e(\chi)>0$. So we make a change of variable $t=\frac{y}{x}-1$ in (\ref{y2}) and apply the functional equation (\ref{localfe}) for a second time---this leads to
\begin{equation}\label{y22}
Y_2(x; s)=|x|\gamma(s,\chi,\psi)\int_k \chi(y)|y|^s\Phi_x(y)d^\times y,\quad \re(s)+e(\chi)>0,
\end{equation}
where $\Phi_x(y)=\int_k \alpha(t)\hat{\phi}\big(x(t+1)\big)\psi(-ty)dt$. To summarize, we have obtained
\begin{equation}\label{zeta3}
Z(\phi\psi_x,\chi,s)=\frac{\gamma(1-s,\chi^{-1},\psi^{-1})}{\chi(x)|x|^s}\sum_{i=1}^3 Y_i(x;s).
\end{equation}
We show below that $Y_2(s)$ and $Y_3(s)$ are essentially negligible and $Y_1(s)$ is the main term.

\begin{lemma}\label{lemmay3}
Suppose $|x|\geqs 1$ and $\re(s)+e(\chi)\in [e_1,e_2]$ with $e_1>0$.\\
\emph{(i)} For any $A>0$, there exists a constant $C_A$ depending on $A, \phi, e_2$ such that $|Y_3(x ; s)|\leqs C_A|x|^{-A}$.\\
\emph{(ii)} For any $A>0$, there exists a constant $C_A$ depending on $A, \phi, e_1$ such that
\[
|\gamma(1-s,\chi^{-1},\psi^{-1})Y_2(x ; s)|\leqs C_A|x|^{-A}.
\]
\end{lemma}
\begin{proof}
(i) For $Y_3(x;s)$, we use (\ref{y3}). In order that the integrand therein is nonzero, it is necessary that $\big|\frac{y}{x}-1\big|\geqs \frac{1}{8}$, whence $|y-x|\geqs \frac{1}{8}|x|$ and therefore
\[
|\chi^{-1}\big(\frac{y}{x}-1\big)\big|\frac{y}{x}-1\big|^{-s}|= |x|^{\re(s)+e(\chi)}|y-x|^{-\re(s)-e(\chi)}\leqs 8^{\re(s)+e(\chi)}\leqs 8^{e_2}.
\]
It follows that
\[
|Y_3(x ; s)|\leqs 8^{e_2}\int_{|y|\geqs \frac{|x|}{8}} \big|\hat{\phi}(y)\big|dy\leqs C_A|x|^{-A},
\]
where $C_A$ depends on $A, \phi, e_2$ and we have used the fact $\widehat{\phi}$ is rapidly decreasing.

(ii) For $Y_2(x;s)$, we use (\ref{y22}), which leads to
\[
\gamma(1-s,\chi^{-1},\psi^{-1})Y_2(x ; s)=|x|\int_k \chi(y)|y|^s \Phi_x(-y)d^\times y.
\]
Recall $\Phi_x(-y)=\int_k \alpha(t)\hat{\phi}\big(x(t+1)\big)\psi(-ty)dt$. Because $\widehat{\phi}$ and its derivatives are rapidly decreasing, there are constant $c_{\phi,A}$ and $c_{\phi,n,A}$ such that
\begin{align*}
|\Phi_x(-y)|&\leqs c_{\phi,A}|x|^{-A},\quad\quad\quad\,\, \text{for all $y\in k$,}\\
|\Phi_x(-y)|&\leqs c_{\phi,n,A}|x|^{-A}y^{-n},\quad |y|\geqs 1, n\in \bn.
\end{align*}
Choose $n> e_2$, then
\begin{align*}
\Big|\int_k \Phi_x(-y)\chi(y)|y|^sd^\times y\Big|&\leqs \int_{|y|\leqs 1}c_{\phi,A}|x|^{-A}|y|^{e_1}d^\times y+\int_{|y|\geqs 1}c_{\phi,n,A}|x|^{-A}|y|^{e_2-n}d^\times y \leqs C_A|x|^{-A}.
\end{align*}
where $C_A$ depends on $\phi, A, e_1$. This leads to the assertion in (ii).
\end{proof}

The uniform estimate of $Y_1(x;s)$ needs to be treated more carefully. If $k=\br$, write $\chi(x)=\sgn(x)^m|x|^r$ with $m\in \{0,1\}$. If $k=\bc$, write $\chi(x)=|x|_\bc^r x^m\bar{x}^n$, with $m, n\in \bz_{\geqs 0}$ and $mn=0$.
\begin{lemma}\label{lemmay1}
Suppose $|r+s|\leqs e$ if $k=\br$ or $|r+s|+m+n\leqs e$ if $k=\bc$. There exists a constant $C$ depending on $\phi$ and $e$ such that $Y_1(s)=\chi(-1)\phi(0)+R(x;s)$, with 
\begin{equation}
|R(x;s)|\leqs 
\begin{cases}
C|x|_\br^{-1}, & k=\br,\\
C|x|_\bc^{-\frac{1}{2}}, &k=\bc.
\end{cases}
\end{equation}
\end{lemma}
\begin{proof}
We use (\ref{y1}). In order the integrand therein to be nonzero, it is necessary that $\big|\frac{y}{x}\big|\leqs \frac{1}{8}$. So one may use the binomial expansion of $\chi^{-1}\big(\frac{y}{x}-1\big)\big|1-\frac{y}{x}\big|^{-s}$ to derive the asymptotic expansion of $Y_1(x;s)$. We take the leading term only and write
\[
\chi^{-1}\big(\frac{y}{x}-1\big)\big|1-\frac{y}{x}\big|^{-s}=\chi(-1)+\mr\big(\frac{y}{x}\big),
\]
where $|\mr(\frac{y}{x})|\leqs C_0\big|\frac{y}{x}|$ if $k$ is real or $|\mr(\frac{y}{x})|\leqs C_0\big|\frac{y}{x}|^{-1/2}$, with $C_0$ dependent on $e$. It follows that 
\begin{align*}
Y_1(x ; s)=&\chi(-1)\int_k  \hat{\phi}(y)\alpha\big(\frac{y}{x}\big) dy + \int_k  \hat{\phi}(y)\alpha\big(\frac{y}{x}\big)\mr\big(\frac{y}{x}\big)dy\\
=&\chi(-1)\int_k \hat{\phi}(y)dy+\chi(-1)\int_k \widehat{\phi}(y)\big(1-\alpha\big(\frac{y}{x}\big)\big)dy+\int_k  \hat{\phi}(y)\alpha\big(\frac{y}{x}\big)\mr\big(\frac{y}{x}\big)dy.
\end{align*}
The first term is $\chi(-1)\phi(0)$. The second term is bounded by a multiple of $|x|^{-A}$ for any $A>0$ because $\widehat{\phi}$ is rapidly decaying. The third term is bounded by a multiple of $|x|^{-1}_\br$ or $|x|^{-1/2}_\bc$ due to the bound of $\mr\big(\frac{y}{x}\big)$ and the rapid decay of $\widehat{\phi}$. This proves the assertion.
\end{proof}

\begin{proposition}\label{asymptotic-ft}
Suppose $\phi\in \ms(k)$ and $e(\chi)>-1$. When $|x|$ is sufficiently large, there is
\begin{equation}
\widehat{\phi\chi}(x)=\chi(-1)\phi(0)\gamma(0,\chi^{-1},\psi^{-1})\chi^{-1}(x)|x|^{-1}+
\begin{cases}
0, &\text{$k$ is nonarchimedean,}\\
\mathrm{O}(|x|^{-e(\chi)-2}), &k=\br,\\
\mathrm{O}(|x|^{-e(\chi)-\frac{3}{2}}), &k=\bc.
\end{cases}
\end{equation}
If $k$ is nonarchimedean, the equality holds when $\mathrm{Supp}(\widehat{\phi})\subseteq x\cdot \big(\mathrm{Cond}(\chi)\cap \varpi\mo_k\big)$. If $k$ is archimedean, the equality holds for $|x|\geqs 1$ and the hidden constant in the big $\mathrm{O}$ symbol can be uniform if $\chi$ varies in a compact subset of $\{\chi\in \Hom(k^\times, \bc^\times): e(\chi)>-1\}$.
\end{proposition}
\begin{proof}
When $k$ is nonarchimedean, the assertion follows from (\ref{zeta2}) and Lemma \ref{zetapadic}. When $k$ is archimedean, the assertion follows from (\ref{zeta3}), Lemma \ref{lemmay3}, Lemma \ref{lemmay1}, and the fact that $|\gamma(0,\chi^{-1},\psi^{-1})|$ is uniformly bounded if $\chi$ varies in a compact subset of $\{\chi:e(\chi)>-1\}$.
\end{proof}

\subsubsection{The isometry property of quadratic Fourier transform}\label{qft}

\begin{proposition}\label{isometry-identity}
Suppose $\phi\in \ms(k)$ and $W(x)$ is an even function in $L^1(k)$. If $W(x)|x|^{-1}$, $W^2(x)|x|^{-1}$,, and $\mf_2W(t)|t|^{-1/2}$ are in $L^1(k)$, then
\begin{itemize}
\item[(1)] $\int_k\mf_2W(t)\mf_2\phi(-t)dt=2|2|_k^{-1}\int_k W(x)\phi(x)|x|^{-1}dx$;
\item[(2)] $\int_k\mf_2W(\delta t)\mf_2\phi(-t)dt=0$ for $\delta\in k^\times\backslash {k^\times}^2$.
\end{itemize}
\end{proposition}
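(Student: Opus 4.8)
Since $\mf_2$ kills odd functions and $W$ is even, we may assume $\phi$ is even. The plan is to transport both sides of (1) to the additive group $k$ by the substitution $x\mapsto x^2$, recognize the resulting identity as Parseval's formula for $\mf$, and deal with the genuine failure of $L^1$/$L^2$-integrability by truncating away from $0$. Recall from section~\ref{qft} that for even $f\in L^1(k)$ one has $\mf_2 f=2|2|_k^{-1}\mf(\wtilde f|\cdot|^{-1/2})$, and that the change of variables $x\mapsto x^2$ gives $\|\wtilde f|\cdot|^{-1/2}\|_{L^1}=\tfrac12|2|_k\|f\|_{L^1}$, so $\wtilde f|\cdot|^{-1/2}\in L^1(k)$. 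Put $g=\wtilde W|\cdot|^{-1/2}$ and $h=\wtilde\phi|\cdot|^{-1/2}$; both lie in $L^1(k)$, the integrability of $W^2(x)|x|^{-1}$ forces (again via $x\mapsto x^2$) that $g\in L^2(k)$, while $h$ will in general fail to be in $L^2$ precisely when $\phi(0)\neq0$. As $\mf_2 W=2|2|_k^{-1}\mf g$, $\mf_2\phi=2|2|_k^{-1}\mf h$, and $x\mapsto x^2$ turns $\int_k gh$ into $\tfrac12|2|_k\int_k W\phi|\cdot|^{-1}$, statement (1) is equivalent to the Parseval-type identity $\int_k\mf g(t)\,\mf h(-t)\,dt=\int_k g(x)h(x)\,dx$.

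I would first check that both sides of (1) converge absolutely. On the right this is immediate since $|W(x)\phi(x)||x|^{-1}\le\|\phi\|_\infty|W(x)||x|^{-1}$. On the left the crux is the bound $\mf_2\phi(t)=O(|t|^{-1/2})$ as $t\to\infty$: the $|\cdot|^{-1/2}$-singularity of $h$ at $0$ can be written as a finite sum of terms $\phi_0\chi$ with $\phi_0\in\ms(k)$ and $\chi$ a quasi-character of exponent $-\tfrac12\in(-1,0)$ — the $\chi$ being $|\cdot|^{-1/2}$ times the characters of $k^\times/{k^\times}^2$, which record the fact that $\wtilde\phi$ is supported on squares — with the rest of $h$ less singular at $0$, so Proposition~\ref{asymptotic-ft} gives $\mf h(t)=O(|t|^{-1/2})$, hence the same for $\mf_2\phi$. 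Combined with the boundedness of $\mf_2 W$ (since $W\in L^1$) and the hypothesis that $\mf_2 W(t)|t|^{-1/2}$ be integrable, the left-hand integrand is dominated near $\infty$ by a constant times $|\mf_2 W(t)||t|^{-1/2}$ and is bounded on bounded sets, hence lies in $L^1(k)$.

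To prove the Parseval identity I would regularize in the space variable. For $\epsilon>0$ put $h_\epsilon=h\cdot\mathbf{1}_{\{|x|>\epsilon\}}\in L^1\cap L^2$; then, writing $\mf h_\epsilon(-t)=\overline{\mf(\overline{h_\epsilon})(t)}$ and using $g\in L^2$, the Plancherel theorem gives $\int_k\mf g(t)\,\mf h_\epsilon(-t)\,dt=\langle g,\overline{h_\epsilon}\rangle=\int_k g(x)h_\epsilon(x)\,dx$. Letting $\epsilon\to0$, the right side tends to $\int_k gh$ by dominated convergence, since $|gh|=|\wtilde W\wtilde\phi||x|^{-1}$ is integrable (via $x\mapsto x^2$ it equals $\tfrac12|2|_k\int_k|W\phi||x|^{-1}$, finite because $\phi$ is bounded and $W(x)|x|^{-1}$ is integrable). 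For the left side, $\mf h_\epsilon(-t)-\mf h(-t)$ is a fixed constant times $s_\epsilon(t):=\int_{|y|\le\epsilon^{1/2}}\phi(y)\psi(-ty^2)\,dy$, and I would invoke two uniform bounds for $s_\epsilon$: the trivial one $|s_\epsilon(t)|\le m_\epsilon:=\int_{|y|\le\epsilon^{1/2}}|\phi(y)|\,dy$ with $m_\epsilon\to0$, and — after isolating the stationary contribution at $y=0$ exactly as in the proof of Lemma~\ref{asymptotic-zi} — a bound $|s_\epsilon(t)|\le C_\phi|t|^{-1/2}$ with $C_\phi$ independent of $\epsilon\in(0,1]$ (the truncated Gauss integral $\int_{|y|\le A}\psi(-ty^2)\,dy$ is $O(|t|^{-1/2})$ uniformly in $A$ by an elementary Fresnel/Gauss-sum estimate, and $\phi-\phi(0)$ vanishes at $0$). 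Splitting $\int_k|\mf_2 W(t)||s_\epsilon(t)|\,dt$ at $|t|=R$ and bounding it by $m_\epsilon\int_{|t|\le R}|\mf_2 W(t)|\,dt$ (finite, as $\mf_2 W$ is bounded) plus $C_\phi\int_{|t|>R}|\mf_2 W(t)||t|^{-1/2}\,dt$ (the tail of an $L^1$ function), a choose-$R$-then-$\epsilon$ argument makes this difference tend to $0$; this proves (1).

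For (2), the substitution $x\mapsto\delta x^2$ expresses $t\mapsto\mf_2 W(\delta t)$ as a nonzero constant times $\mf(g_\delta)$ with $g_\delta$ supported on the nontrivial square class $\delta\,{k^\times}^2$, while $h=\wtilde\phi|\cdot|^{-1/2}$ is supported on ${k^\times}^2\cup\{0\}$, so $g_\delta h=0$ almost everywhere; the hypotheses on $W$ transfer to $g_\delta$ because $x\mapsto\delta x^2$ only rescales the relevant norms, so running the same regularized Plancherel argument gives $\int_k\mf_2 W(\delta t)\,\mf_2\phi(-t)\,dt=\mathrm{const}\cdot\int_k g_\delta h=0$. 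I expect the one real obstacle to be the uniform-in-$\epsilon$ estimate $|s_\epsilon(t)|\le C_\phi|t|^{-1/2}$ for the truncated quadratic exponential integral; this is exactly what the asymptotic analysis of section~\ref{qftsec} supplies, and it is what allows the $p$-adic and archimedean cases to be handled in a single argument.
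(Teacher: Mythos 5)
Your argument is correct in substance, but it takes a genuinely different route from the paper at the key technical point, namely how to handle the failure of $\wtilde\phi|\cdot|^{-1/2}$ to lie in $L^2(k)$ when $\phi(0)\neq 0$. The paper splits $\phi=\phi_1+\phi_2$ with $\phi_1(0)=0$ (so that $\wtilde\phi_1|\cdot|^{-1/2}\in L^2$ and Plancherel applies directly), writes the bump $\wtilde\phi_2$ on $k^\times$ as a finite combination $\sum_i c_i\chi_i f$ with quadratic characters $\chi_i$ and $f\in\ms(k)$, and then deforms the exponent, applying Plancherel to $\mf(f\chi_i|\cdot|^{-1/2+s})$ for $s>0$ and letting $s\to 0^+$ by dominated convergence; the domination on the spectral side comes from the uniform-in-$s$ asymptotics of Lemma~\ref{asymptotic-zi} together with the hypothesis $\mf_2W(t)|t|^{-1/2}\in L^1(k)$. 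You instead truncate in the space variable, $h_\epsilon=\wtilde\phi|\cdot|^{-1/2}\mathbf{1}_{\{|x|>\epsilon\}}$, apply Plancherel for each $\epsilon$, and pass to the limit using the two bounds $|s_\epsilon(t)|\le m_\epsilon$ and $|s_\epsilon(t)|\le C_\phi|t|^{-1/2}$ uniformly in $\epsilon$, again letting $\mf_2W(t)|t|^{-1/2}\in L^1(k)$ absorb the tail; parts (1) and (2) then follow exactly as in the paper from the support considerations on square classes. Both proofs hinge on the same hypothesis in the same role, so the approaches are cousins, but yours avoids the quadratic-character expansion and the $\gamma$-factor/zeta-deformation machinery at the price of a uniform truncated oscillatory-integral estimate. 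One caveat: that estimate is \emph{not} literally supplied by section~\ref{qftsec} as you suggest --- Lemma~\ref{asymptotic-zi} and Proposition~\ref{asymptotic-ft} concern complete integrals $\mf(\phi\chi)$ with $\phi\in\ms(k)$ (uniform in the exponent of $\chi$, not in a truncation radius), and $\phi\,\mathbf{1}_{\{|y|\le\epsilon^{1/2}\}}$ is not Bruhat--Schwartz archimedean-locally --- so you must prove the uniform-in-$A$ bound $\big|\int_{|y|\le A}\phi(y)\psi(-ty^2)\,dy\big|\le C_\phi|t|^{-1/2}$ separately; this is elementary (van der Corput after $u=y^2$, or a second-derivative/stationary-phase estimate, in the archimedean cases; standard Gauss-sum vanishing and size estimates on annuli in the $p$-adic case, where in fact only boundedly many annuli contribute), and with it your proof closes. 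Your preliminary verification that both sides of (1) converge absolutely, via $\mf_2\phi(t)=O(|t|^{-1/2})$, is sound and is implicitly present in the paper as well.
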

\begin{proof}
We may assume that $\phi$ is an even function. Write $\phi=\phi_1+\phi_2$, where $\phi_1=\phi-\phi_2$ and $\phi_2$ is chosen to be (i) $\phi(0)1_{\mo_k}$ if $k$ is nonarchimedean, (ii) $\phi(0)\beta(|x|_k)$ if $k$ is archimedean, where $\beta(u)$ is a non-negative smooth cut-off function on $\br$ taking value $1$ when $|u|\leqs 1$ and value $0$ when $|u|\geqs 2$. We will prove the two assertions in the Proposition for $\phi_i$ separately.

(I) Regarding $\phi_1$, we observe that $\wtilde{\phi}_1|\cdot|_k^{-1/2}\in L^2(k)$. This is because
\begin{itemize}
\item[(a)] If $k$ is nonarchimedean, $\wtilde{\phi}_1$ is locally constant, bounded, and vanishing near $0$.
\item[(b)] If $k$ is archimedean, $\wtilde{\phi}_1$ is bounded and rapidly decreasing. Furthermore, when $|x|_k<1$, there is a constant $c$ such that the value $|\wtilde{\phi}_1(x)|=|\phi(\sqrt{x})-\phi(0)|$ is bounded by $c|x|_\br^{\frac{1}{2}}$ if $k=\br$ or by $c|x|_\bc^{\frac{1}{4}}$ if $k=\bc$, according to mean value theorem.
\end{itemize}
Now the condition $W^2(x)|x|^{-1}\in L^1(k)$ implies $\wtilde{W}|\cdot|^{-\frac{1}{2}}\in L^2(k)$ and $\mf_2W\in L^2(k)$. Applying the $L^2$-isometry property of Fourier transform, we get
\begin{align*}
\int_k\mf_2W(t)\mf_2\phi_1(-t)dt
=&2^2|2|_k^{-2}\int_k\mf(\wtilde{W}|\cdot|^{-1/2})(t)\mf(\wtilde{\phi}_1|\cdot|^{-1/2})(-t)dt\\
=&2^2|2|_k^{-2}\int_k \wtilde{W}(x)|x|^{-1/2}\cdot \wtilde{\phi}_1(x)|x|^{-1/2}dx\\
=&2^2|2|_k^{-2}\int_{k^2} W(\sqrt{x})\phi_1(\sqrt{x})|x|^{-1}dx\\
=&2|2|_k^{-1}\int_k W(x)\phi_1(x)|x|^{-1}dx.
\end{align*}
Similarly,
\[
\int_k\mf_2W(\delta t)\mf_2\phi_1(-t)dt=2^2|2|_k^{-2}\int_k \wtilde{W}(\delta^{-1}x)|\delta x|^{-1/2}\cdot \wtilde{\phi}_1(x)|x|^{-1/2}dx.
\]
If $\delta\in k^\times\backslash {k^\times}^2$, then the right hand side vanishes because $\wtilde{W}(\delta^{-1}x)$ and $\wtilde{\phi}_1(x)$ can not be simultaneously nonzero on $k^\times$.

(II) Regarding $\phi_2$, we may suppose $\phi(0)\neq 0$. Observe that $\wtilde{\phi_2}$ can be written as
\begin{equation}\label{phitwotilded}
\wtilde{\phi}_2=\sum_i c_i\chi_i f,\quad x\in k^\times,
\end{equation}
where $\chi_i$ are quadratic characters of $k^\times$, $f\in C^\infty_c(k)$ is a function taking constant value in a neighborhood of $0$, and $c_i$ are constants. An explicit choice of $f$ is given below:
\begin{itemize}
\item[(a)] When $k$ is nonarchimedean, then $\phi(0)^{-1}\cdot \wtilde{\phi}_2|_{k^\times}$ is the characteristic function of $\mo_k\cap {k^\times}^2$. Let $\{\chi_i\}$ be the finite set of quadratic characters of $k^\times$, then there are constants $c_i$ such that $\wtilde{\phi}_2=\phi(0)\sum_{i}c_i\chi_i1_{\mo_k}$ on $k^\times$. So we may take $f(x)=\phi(0)\cdot 1_{\mo_k}(x)$.

\item[(b)] When $k$ is real, $\wtilde{\phi}_2(x)=\frac{1}{2}(1+\sgn(x))\phi(0)\beta(|x|_\br^{1/2})$ for $x\in \br^\times$. Take $f(x)=\phi(0)\beta(|x|_\br^{1/2})$.

\item[(c)] When $k$ is complex, $\wtilde{\phi}_2(x)=\phi(0)\beta(|x|_\bc^{1/2})$. Take $f(x)=\phi(0)\beta(|x|_\bc^{1/2})$.
\end{itemize}

We claim
\begin{equation}\label{chiequality}
\int_k\mf(\wtilde{W}|\cdot|^{-1/2})(t)\mf(f\chi_i|\cdot|^{-1/2})(-t)dt=\int_k \wtilde{W}(x)f(x)\chi_i(x)|x|^{-1}dx.
\end{equation}
To prove it, consider a family of functions $f\chi_i|\cdot|^{-1/2+s}$, $s\in [0,\frac{1}{4}]$. When $s\in (0,\frac{1}{4}]$, the functions $f\chi_i|\cdot|^{-1/2+s}$ are square-integrable. Combining this with $\wtilde{W}|\cdot|^{-\frac{1}{2}}\in L^2(k)$, we can apply the isometry property of Fourier transform to get
\begin{equation}\label{chiequality2}
\int_k\mf(\wtilde{W}|\cdot|^{-1/2})(t)\mf(f\chi_i|\cdot|^{-1/2+s})(-t)dt=\int_k \wtilde{W}(x)f(x)\chi_i(x)|x|^{-1+s}dx,\quad s\in (0,\frac{1}{4}).
\end{equation}

We would like to let $s\rar 0^+$. The key observations are: (a) $\mf(f\chi_i|\cdot|^{-1/2+s})(t)$ are uniformly bounded and (b) by Proposition ~\ref{asymptotic-ft}, there exist a constant $C$ such that for $s\in [0,\frac{1}{4}]$, there is
\[
\big|\mf(f\chi_i|\cdot|^{-1/2+s})(t)\big|\leqs C|t|^{-\frac{1}{2}-s},\quad |t|\geqs 1.
\]
Combining (a), (b) with the conditions $\mf_2W\in L^2(k)$ and $\mf_2W|\cdot|^{-\frac{1}{2}}\in L^1(k)$, one sees that the family of functions $\mf(\wtilde{W}|\cdot|^{-1/2})\mf(f\chi_i|\cdot|^{-1/2+s})$ on the left hand side of (\ref{chiequality2}) have their modulus dominated by a function in $L^1(k)$.

On the other hand, Combining the fact $f\in C^\infty_c(k)$ and the conditions $W\in L^1(k)$ and $W|\cdot|^{-1}\in L^1(k)$, one sees that the family of functions $\wtilde{W}(x)f(x)\chi_i(x)|x|^{-1+s}$ on the right hand side of (\ref{chiequality2}) also have their modulus dominated by a function in $L^1(k)$. So the dominated convergence theorem is applicable to both sides of (\ref{chiequality2}) when $s\rar 0^+$. This leads to (\ref{chiequality}).

Combining (\ref{chiequality}) and (\ref{phitwotilded}), we obtain
\[
\int_k\mf(\wtilde{W}|\cdot|^{-1/2})(t)\mf(\wtilde{\phi}_2|\cdot|^{-1/2})(-t)dt=\int_k \wtilde{W}(x)\wtilde{\phi}_2(x)|x|^{-1}dx,
\]
which is equivalent to $\int_k\mf_2W(t)\mf_2\phi(-t)dt=2|2|_k^{-1}\int_k W(x)\phi(x)|x|^{-1}dx$.

By the same argument, there is
\[
\int_k\mf_2W(\delta t)\mf_2\phi_2(-t)dt=2^2|2|_k^{-2}\int_k \wtilde{W}(\delta^{-1}x)|\delta x|^{-1/2}\cdot \wtilde{\phi}_2(x)|x|^{-1/2}dx.
\]
If $\delta\in k^\times\backslash {k^\times}^2$, then the right hand side is zero because $\wtilde{W}(\delta^{-1}x)\wtilde{\phi}_2(x)$ vanishes on $k^\times$.
\end{proof}

What we use later is a variation of the above proposition.
\begin{proposition}\label{isometry-identity-extra}
Let $\{\delta_i: 1\leqs i\leqs \ell\}$ be a set of representatives of $k^\times/ {k^\times}^2$ with $\delta_1=1$. Suppose $\phi\in \ms(k)$, $W_1$ is an even function in $L^1(k)$, and $W_i \in L^1(k)$ ($2\leqs i\leqs \ell$). If $W_i(x)|x|^{-1}$, $W_i^2(x)|x|^{-1}$ ($1\leqs i\leqs \ell$), and $\big(\sum_{i=1}^\ell\mf_2W_i(\delta_i t)\big)|t|^{-\frac{1}{2}}$ are in $L^1(k)$, then 
\[
\int_k\Big(\sum_{i=1}^\ell\mf_2 W_i(\delta_i t)\Big)\mf_2\phi(-t)dt=2|2|_k^{-1}\int_k W_1(x)\phi(x)|x|^{-1}dx.
\]
\end{proposition}
\begin{proof}
The argument is the same as for Proposition \ref{isometry-identity}, except that we replace $\mf_2W(t)$ with $\sum_{i=1}^\ell \mf_2W_i(\delta_i t)$ and replace $\wtilde{W}|\cdot|^{-1/2}$ with $\sum_{i=1}^\ell \wtilde{W}(\delta_i^{-1}\cdot)|\delta_i \cdot|^{-1/2}$. So we need the condition $\big(\sum_{i=1}^\ell\mf_2W_i(\delta_i t)\big)|t|^{-\frac{1}{2}}\in L^1(k)$ but do not require $\mf_2W_i(\delta_i t)\big)|t|^{-\frac{1}{2}}\in L^1(k)$ for each $i$.
\end{proof}

\section{The Whittaker functional on $\GL_2$-representations}

In this section, we compare the local inner product with the local Whittaker functional on a local irreducible unitary representation of $\GL_2$, and derive the Whittaker period formula on $\GL_2$.

Let $P$ denote the subgroup of $\GL_2$ consisting of upper triangular matrices, $U$ the unipotent radical of $P$, and $M$ the subgroup of diagonal matrices. Identify $U$ with the affine line $\mathrm{A}^1$ via the map $u=\smalltwomatrix{1}{x}{}{1}\rar x$. Write the Weyl group as $W=\{I_2, w\}$, with $w=\smalltwomatrix{}{1}{-1}{}$.

\subsection{Local theory}\label{sect_gl2local}

Let $k$ be a local field of characteristic zero. Choose a maximal compact subgroup $K$ of $\GL_2(k)$---it can be $\GL_2(\mo_k), \mathrm{O}_2$, or $\mathrm{U}(2)$, depending on $k$. Consider smooth admissible representations of $\GL_2(k)$. If $k$ is archimedean, these are understood as admissible $(\fg, K)$-modules; by a theorem of Casselman-Wallach, each $(\fg,K)$-module $\pi$ of finite length can be uniquely globalized to a smooth admissible representation $\pi^\infty$ of $\GL_2(k)$ on a Fr\'{e}chet space. If $k$ is nonarchimedean, the underlying space is topologized with the trivial locally convex topology in which every semi-norm is continuous (See \cite[Section 2]{jl70}).

Let $\msr$ denote the set of irreducible smooth admissible representations of $\GL_2(k)$ and $\msr_u$ the subset of unitarizable ones. Members of $\msr_u$ are of the following form (See \cite[Theorem 12]{gode70} for the nonarchimedean case):
\begin{itemize}
\item[(a)] an induced representation $\pi(\mu_1,\mu_2)=\Ind_P^{\GL_2}(\mu_1,\mu_2)$, where $\mu_1, \mu_2$ are unitary or $\mu_1=\mu|\cdot|^\alpha$, $\mu_2=\mu|\cdot|^{-\alpha}$ with $\mu$ unitary and $\alpha\in (-\frac{1}{2},0)\cup(0,\frac{1}{2})$,
\item[(b)] a one-dimensional representation $\mu(\det g)$ with $\mu$ unitary,
\item[(c)] the Steinberg representation $\mathrm{St}(\mu)$ with $\mu$ unitary, which is the unique subrepresentation of $\Ind_P^{\GL_2}(\mu|\cdot|^{\frac{1}{2}},\mu|\cdot|^{-\frac{1}{2}})$, if $k$ is nonarchimedean; or the representation $\mu\otimes D_p$ ($p\geqs 2$), where $\mu$ is unitary and $D_p$ is the unique subrepresentation of $\Ind(|\cdot|^{\frac{p-1}{2}}\sgn^{p},|\cdot|^{-\frac{p-1}{2}})$, if $k=\br$,
\item[(d)] a supercuspidal representation with unitary central character, if $k$ is nonarchimedean.
\end{itemize}
Note that representations of type (c) and (d) are square-integrable. Representations of type (c), (d), and type (a) with $\mu_1,\mu_2$ unitary are tempered.\\

\indent Choose a nontrivial $\psi\in \Hom(k,\bc^\times)$. A Whittaker functional on a smooth admissible representation $\pi$ with respect to $\psi$ is an element in $\Hom_{U(k)}(\pi,\psi)$, that is, a continuous linear functional $\ell_\psi:\pi\rar \bc$ satisfying
\[
\ell_\psi\big(\pi(u)f\big)=\psi(u)\ell_\psi(f),\quad u\in U(k), f\in \pi.
\]
If $\pi\in \msr$ is infinite dimensional, then $\dim\Hom_{U(k)}(\sigma,\psi)=1$ by \cite{jl70}; we choose a nonzero Whittaker functional $\ell_\psi$ and associate to each $f\in \pi$ a Whittaker function $W_{f,\psi}(g)=\ell_\psi(\pi(g)f)$, $g\in \GL_2(k)$. (Note: when $k$ is archimedean, $\ell_\psi$ is a functional on $\pi^\infty$.)

\subsubsection{Estimate of matrix coefficients and Whittaker functions} Suppose $\pi\in \msr_u$ is infinite dimensional. We define a number $\alpha(\pi)\in \br$ by
\begin{equation*}
\alpha(\pi)=\begin{cases}
0, &\text{if $\pi$ is tempered},\\
|\alpha|, &\text{if $\pi=\Ind_P^{\GL_2}(\mu|\cdot|^{\alpha},\mu|\cdot|^{-\alpha})$.}
\end{cases}
\end{equation*}
Then $\alpha(\pi)<\frac{1}{2}$, due to the description of $\msr_u$. The following asymptotic estimates hold. For square-integrable representations, they can be sharpened though the sharpened one is not needed.

\begin{lemma}\label{gl2estimate}
Suppose $\pi\in \msr_u$ is infinite dimensional and $f_1, f_2, f\in \pi$.\\
\emph{(i)} For any $\epsilon>0$, there is a constant $C_{\epsilon,f_1,f_2}$ such that
\begin{align}
\big|\big(\pi\smalltwomatrix{a}{}{}{1}f_1,f_2\big)\big|&\leqs C_{\epsilon,f_1,f_2} |a|^{\frac{1}{2}-\alpha(\pi)-\epsilon},\quad\quad |a|\leqs 1, \label{abound}\\
\big|\big(\pi (w\smalltwomatrix{1}{x}{}{1})f_1,f_2\big)\big|&\leqs C_{\epsilon, f_1,f_2}|x|^{-1+2\alpha(\pi)+\epsilon},\quad |x|\geqs 1. \label{ubound}
\end{align}
\emph{(ii)} When $a$ is near infinity, $W_{f,\psi}\smalltwomatrix{a}{}{}{1}$ vanishes if $k$ is nonarchimdean and rapidly decays if $k$ is archimedean. For any $\epsilon>0$, there is a constant $C_{\epsilon,f}$ such that
\begin{equation}
\big|W_{f,\psi}\smalltwomatrix{a}{}{}{1}\big|\leqs C_{\epsilon,f}|a|^{\frac{1}{2}-\alpha(\pi)-\epsilon},\quad\quad\quad\quad\quad |a|\leqs 1. \label{wbound}
\end{equation}
\end{lemma}
\begin{proof}
In part (i), (\ref{ubound}) is a consequence of (\ref{abound}), by the KAK decomposition and the $K$-finiteness of $\varphi_1,\varphi_2$. Estimate (\ref{abound}) is well-known. (For example, it can be verfied with the same argument as in the proof of Lemma ~\ref{sl2mbound} and we skip it for brevity.)

Part (ii) follows from the explicit study of the Whittaker functional in \cite{jl70, gode70}, see \cite[section 1.8 \& 1.10]{gode70} when $k$ is nonarchimdean and \cite[section 2.5, Equations (68) \& (84)]{gode70} when $k$ is archimedean. If $k$ is archimedean, it alternatively follows from the general estimate in \cite[section 15.2.2]{wallach92}.
\end{proof}

\subsubsection{Local duality}

Suppose $\pi\in \msr_u$ is infinite dimensional. Let $(,)$ be an inner product on $\pi$ and $\ell_\psi$ be a nonzero Whittaker functional on $\pi$. We explicate the relation between $(,)$ and $\ell_\psi$.

First, we regularize the integral $\int_k (\pi(u)f_1,f_2)\psi(-u)du$. Let $T_{(\pi(u)f_1,f_2)}$ be the linear functional defined by $T_{(\pi(u)f_1,f_2)}\lambda=\int_k (\pi(u)f_1,f_2)\lambda(u)du$, $\lambda\in \ms(k)$; it is in $\ms^\prime(k)$ and, as shown in Remark ~\ref{gl2regular} below, its Fourier transform $\mf T_{(\pi(u)f_1,f_2)}$ is represented over $k^\times$ by a smooth function $W_{f_1,f_2,\psi}(a)$.
\begin{definition}\label{defgl2re}
Define
\begin{equation}\label{regularizedi}
\int_k (\pi(u)f_1,f_2)\psi(-u)du\overset{\triangle}{=}W_{f_1,f_2,\psi}(-1).
\end{equation}
Obviously, $W_{f_1,f_2,\psi}(-1)\in \Hom_{U(k)}(\pi,\psi)\otimes \overline{\Hom_{U(k)}(\pi,\psi)}$.
\end{definition}

\begin{remark}\label{gl2regular}
We briefly argue for the smoothness of $W_{f_1,f_2,\psi}(t)$ on $k^\times$.
\begin{itemize}
\item[(i)] When $k$ is nonarchimedean, there exists a compact open subgroup $U_0$ of $\mo_k^\times$ such that $(\pi(ua)f_1,f_2)$ is independent of $a$ when $a\in U_0$. It follows that for $\lambda(t)\in \ms(k^\times)$, there is
\begin{align*}
\int_k (\pi(u)f_1,f_2)\widehat{\lambda}(u)du=&\int_k\int_{U_0}(\pi(au)f_1,f_2)\widehat{\lambda}(u)d^\times adu\\
=&\int_k \lambda(t)\Big(\int_k \mathrm{Vol}(U_0)^{-1}\cdot \widehat{1_{U_0}}(tu)(\pi(u)f_1,f_2)du\Big)dt
\end{align*}
Hence $W_{f_1,f_2,\psi}(t)=\int_k \mathrm{Vol}(U_0)^{-1}\widehat{1_{U_0}}(tu)(\pi(u)f_1,f_2)du$ is a $U_0$-invariant function on $k^\times$.
\item[(ii)] When $k$ is archimedean, by Dixmier-Malliavin Theorem, there exists finitely many smooth vectors $f_{1,i}, f_{2,i}$ in the unitary closure of $\pi$ and functions $\lambda_i(a)\in C^\infty_c(F_v^\times)$ such that
\[
f_1\otimes f_2=\sum_i \int_{k^\times} \lambda_i(a)\cdot \pi\smalltwomatrix{a}{}{}{1}f_{1,i}\otimes \pi\smalltwomatrix{a}{}{}{1}f_{2,i} d^\times a.
\]
Hence $(\pi(u)f_1,f_2)=\sum_i \int_{k^\times}\lambda_i(a) (\pi(a^{-1}u)f_{1,i},f_{2,i})d^\times a$. For $\lambda(t)\in C^\infty_c(k^\times)$, this implies
\[
\int_k (\pi(u)f_1,f_2)\widehat{\lambda}(u)du=\int_k \lambda(t)\Big(\sum_i \int_k \widehat{\lambda_i}(tu)\big(\pi(u)f_{1,i},f_{2,i}\big)du\Big)dt
\]
Hence $W_{f_1,f_2,\psi}(t)=\sum_i\int_k \widehat{\lambda_i}(tu)\big(\pi(u)f_{1,i},f_{2,i}\big)du$ is a smooth function on $k^\times$.
\end{itemize}
\end{remark}

\begin{remark}\label{gl2wcomput}
$W_{f_1,f_2,\psi}(a)$ can be computed by employing a sequence of functions that ``converge" to the single point $a$. Precisely, let $\phi_n\in \ms(k)$ ($n\geqs 1$) be such that $T_{\phi_n}\rar \delta_{a}$ in $\ms^\prime(k)$, then
\begin{equation}\label{abelintegral}
W_{\psi,f_1,f_2}(a)=\lim_{n\rar \infty} \int_k W_{\psi,f_1,f_2}(x)\phi_n(x)da=\lim_{n\rar \infty}\int_k \big(\pi(u)f_1,f_2)\hat{\phi}_n(u)du.
\end{equation}
If $k$ is nonarchimedean, take $\phi_n(x)=|\varpi|^{-n}1_{\varpi^n\mo_k}(x+a)$, then
\begin{equation}\label{gl2stable}
W_{f_1,f_2,\psi}(-a)=\lim_{n\rar \infty} \int_{\varpi^{-n}\mo_k} (\pi(u)f_1,f_2)\psi(-au)du.
\end{equation}
Note that the integral on the right hand side of (\ref{gl2stable}) becomes stable for sufficiently large $n$.
\end{remark}

Second, we relate the local inner product and the local Whittaker functional.
\begin{lemma}\label{gl2localduality}
There exists a nonzero constant $c$ such that for all $f_1, f_2\in \pi$,
\begin{align}
\ell_\psi(f_1)\overline{\ell_\psi(f_2)}=&\frac{1}{c}\int_k (\pi(u)f_1,f_2)\psi(-u)du, \label{m2wgl2}\\
(f_1,f_2)=&c\int_{k^\times} W_{f_1,\psi}\overline{W_{f_2,\psi}}\big(\smalltwomatrix{a}{}{}{1}\big)d^\times a. \label{w2mgl2}
\end{align}
\end{lemma}
\begin{proof}
Note that the right hand side of (\ref{w2mgl2}) is convergent by Lemma \ref{gl2estimate} (ii).

We first prove (\ref{m2wgl2}). Because $\dim\Hom_{U(k)} (\pi,\psi)=1$, there is a constant $c$ such that $W_{f_1, f_2,\psi}(-1)=c\cdot \ell_\psi(f_1)\overline{\ell_\psi(f_2)}$ for all $f_1,f_2\in \pi$. This implies 
\begin{equation}\label{mwrelationgl2}
W_{f_1,f_2,\psi}(-a)=c|a|^{-1} \ell_\psi\big(\pi\smalltwomatrix{a}{}{}{1}f_1\big)\overline{\ell_\psi\big(\pi\smalltwomatrix{a}{}{}{1}f_2\big)},\quad \forall a\in k^\times.
\end{equation}
Actually, choose a sequence of functions $\phi_n\in \ms(k)$ such that $T_{\phi_n}\rar \delta_1$ in $\ms^\prime(k)$, then $T_{\phi_{n,a}}\rar \delta_a$ with $\phi_{n,a}(x)=\phi_n(ax)$. Applying formula (\ref{abelintegral}) to $\phi_{n,a}$ yields (\ref{mwrelationgl2}). Now for (\ref{m2wgl2}), it suffices to show the constant $c$ is nonzero. Actually, if $c=0$, then $W_{f_1,f_2,\psi}(a)=0$ on $k^\times$, whence $\mf T_{(\pi(u)f_1,f_2)}$ is a tempered distribution on $k$ supported at $\{0\}$. By Lemma ~\ref{pointsupport}, $\mf T_{(\pi(u)f_1,f_2)}$ is a multiple of $\delta_0$ if $k$ is nonarchimedean or a finite linear combination of derivatives of $\delta_0$ if $k$ is archimedean. Accordingly, $(\pi(u)f_1,f_2)$ is a constant function if $k$ is nonarchimedean or a polynomial function if $k$ is archimedean. Since $(\pi(u)f_1,f_2)$ decays to $0$ at infinity by (\ref{ubound}), it needs to be zero, but this is a contradiction when $u=1$ and $f_1=f_2\neq 0$. Therefore, $c$ must be nonzero.

For (\ref{w2mgl2}), set $M_{f_1,f_2}(u)=\int_{k^\times} cW_{f_1,\psi}\overline{W_{f_2,\psi}}\big(\smalltwomatrix{a}{}{}{1}\big)\psi(au)d^\times a$, $u\in k$. Then $\mf T_{M_{f_1,f_2}(u)}$ and $\mf T_{(\pi(u)f_1,f_2)}$ agree on $k^\times$ and their difference is a tempered distribution supported at $\{0\}$. By the same argument as before, $M_{f_1,f_2}(u)-(\pi(u)f_1,f_2)$ is a constant function if $k$ is nonarchimedean or a polynomial function if $k$ is archimedean. Because $(\pi(u)f_1,f_2)$ decays to $0$ and $M_{f_1,f_2}(u)$ also decays to $0$ (as the Fourier transform of an $L^1$-function), one must have $M_{f_1,f_2}(u)-(\pi(u)f_1,f_2)=0$.
\end{proof}

\subsection{Global theory}\label{gl2_pwf}

Fix a non-trivial character $\psi$ of $\ba/F$. Let $\pi$ be an irreducible unitary cuspidal automorphic representation of $\GL_2(\ba)$. Equip $\PGL_2(\ba)$ and $U(\ba)=\ba$ with the Tamagawa measures. The standard inner product and the standard Whittaker functional on $\pi$ are :
\begin{align*}
&(f_1,f_2)=\int_{\PGL_2(F)\backslash \PGL_2(\ba)} f_1(h)\overline{f_2}(h)dh,\quad  \ell_\psi(f)=\int_{U(F)\backslash U(\ba)}f(u)\psi^{-1}(u)du.
\end{align*}
We shall derive the relation between $(,)$ and $\ell_\psi$. (Note that $\mathrm{Vol}(\PGL_2(\bq)\backslash \PGL_2(\ba))=2$.)

We fix a maximal compact subgroup $K=\prod_v K_v$ of $\GL_2(\ba)$, with $K_v=\GL_2(\mo_{F_v}), \OO_2(\br)$, or $\mathrm{U}_2$. Write $\pi=\otimes_v\pi_v$ as the restricted tensor product of irreducible admissible unitarizable representations $\pi_v$ of $\GL_2(F_v)$, where almost all $\pi_v$ are spherical with respect to $K_v$. Choose a spherical vector $f_{v,0}$ in each spherical $\pi_v$, then $\pi$ is spanned by decomposable vectors of the form $f=\otimes f_v$, where $f_v=f_{v,0}$ at almost all places. At each place $v$, We fix a choice of non-zero Whittaker functional $\ell_{\psi_v}$ on $\pi_v$, requiring that $\ell_{\psi_v}(f_{v,0})=1$ for almost all spherical $\pi_v$ and that $\ell_\psi=\prod_v \ell_{\psi_v}$; we also fix a choice of local inner product $(,)_v$ on $\pi_v\otimes \pi_v$, requiring that $(f_{v,0},f_{v,0})=1$ for almost all spherical $\pi_v$ and that $(,)=\prod_v (,)_v$. Take the Tamagawa measures $da=\prod_v da_v$ on $\ba$ and $d^\ast a=\prod_v da^\ast_v$ on $\ba^\times$. Recall that $d^\ast a_v=\zeta_{F_v}(1)d^\times a_v$ for almost all $v$.

At each place $v$ of $F$, according to Lemma ~\ref{gl2localduality}, there is a local constant $c_v$ such that
\begin{align}
(f_{1,v},f_{2,v})_v=&c_v\cdot \int_{F_v^\times}W_{f_{1,v},\psi_v}\overline{W_{f_{2,v},\psi_v}}\smalltwomatrix{a_v}{}{}{1}d^\times a_v, \label{gl2vdual1}\\
\ell_{\psi_v}(f_{1,v})\overline{\ell_{\psi_v}(f_{2,v})}=&\frac{1}{c_v}\cdot\int_k (\pi_v(u_v)f_{1,v},f_{2,v})\psi(-u_v)du_v,\quad f_{1,v}, f_{2,v}\in \pi_v. \label{gl2vdual2}
\end{align}
It is easy to see $c_v=\frac{\zeta_{F_v}(2)}{L(1,\pi_v,\ad)}$ for almost all $v$, by either applying the Macdonald formula of spherical matrix coefficients to (\ref{gl2vdual2}) or the Casselman-Shalika formula of spherical Whittaker functions to (\ref{gl2vdual1}). Thus, we define the according normalized pairings below, with which one can immediately write down the Whittaker period formula up to a global constant.
\begin{align}
(f_{1,v},f_{2,v})_v^\sharp\, &\overset{\triangle}{=}\frac{\zeta_{F_v}(2)}{L(1,\pi_v,\ad)\zeta_{F_v}(1)}\int_{F_v^\times}W_{f_{1,v},\psi_v}\overline{W_{f_{2,v},\psi_v}}[\smalltwomatrix{a_v}{}{}{1}]d^\ast a_v,\label{nfunctional11}\\
\ml_\psi^\sharp (f_{1,v },f_{2,v})\, &\overset{\triangle}{=}\frac{L(1,\pi_v,\ad)}{\zeta_{F_v}(2)} \int_{U_v}(\pi_v(u_v)f_{1,v},f_{2,v})_v\psi_v(-u_v)du_v. \label{nfunctional12}
\end{align}
\begin{lemma}
Set $c_\pi=\frac{L(1,\pi,\ad)}{\zeta_F(2)} \prod_v \frac{\zeta_{F_v}(2)}{c_vL(1,\pi_v,\ad)}$, then
\begin{align}
&(f_1,f_2)=\frac{1}{c_\pi}\cdot \frac{L(1,\pi,\ad)\Res_{s=1}\zeta_F(s)}{\zeta_F(2)}\prod_v (f_{1,v},f_{2,v})_v^\sharp, \label{gl2dual1}\\
&\ell_\psi(f_1)\overline{\ell_\psi(f_2)}=c_\pi \cdot \frac{\zeta_F(2)}{L(1,\pi,\ad)}\prod_v \ml_\psi^\sharp (f_{1,v },f_{2,v}). \label{gl2dual2}
\end{align}
\end{lemma}

We compute the constant $c_\pi$ below with the Rankin-Selberg integral on $\GL_2\times \GL_2$. In \cite{lapid_mao2015}, the Rankin-Selberg integral has been used to deduce a similar Whittaker period formula on $\GL_m$. The difference between our approach and that of \cite{lapid_mao2015} lies in the regularization of the local integral of matrix coefficients. When $k$ is nonarchimedean, the regularization used in \cite{lapid_mao2015} is based on stable integrals and agrees with our regularization in terms of Fourier transform and distribution (c.f. Definition ~\ref{defgl2re} and Remark ~\ref{gl2wcomput}); one may view stable integrals as a way to compute the smooth function representing the target distribution with certain characteristic functions as test functions. When $k$ is archimedean, the regularization in \cite{lapid_mao2015} is an ad hoc one based on the explicit structure of the local representation as a subquotient of an induced representation and hence is not suitable for the comparison of local periods. Our regularization is of function-theoretic nature and uniform at every local place, thus allowing for a natural comparison of local Whittaker periods.

We also note that the regularization in Definition ~\ref{defgl2re}, which works for all infinite dimensional iireducible unitary representations of $\GL_2(k)$, can be generalized to local Whittaker period integrals on $\GL_m(k)$ ($m\geqs 3$) for tempered representations. For the sake of brevity, we do not present the generalized regularization. A similar mechanism for local Gross-Prasad period integrals has been demonstrated in \cite[Section 3.4]{liu2016} for tempered representations at archimedean places and would work at nonarchimedean places as well; the key estimate in \cite[Proposition 3.10]{liu2016} was first (essentially) proved by Waldspurger \cite[Section 4]{wald2012} at nonarchimedean places and then extended by Liu to archimedean places.

\begin{proposition}\label{compare_gl2ip}
$c_\pi=\frac{1}{2}$.
\end{proposition}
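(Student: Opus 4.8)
The plan is to prove the identity $c_\pi = 1$ by evaluating both sides of the two global factorizations against a single well-chosen pair of decomposable vectors, reducing the computation to the classical Rankin--Selberg integral for $GL_2$. First I would observe that the constant $c_\pi$ does not depend on the choice of $f_1, f_2$, so it is enough to compute the ratio $\ell_\psi(f_1)\overline{\ell_\psi(f_2)}\big/\prod_v \ml_\psi^\sharp(f_{1,v},f_{2,v})$ for one convenient choice, or equivalently to exhibit a single identity linking the global Whittaker period to the global $L$-value and the product of normalized local periods. The cleanest route is to recall the unfolding of the Rankin--Selberg integral: for a cuspidal $\pi$ on $GL_2(\ba)$, the integral $\int_{PGL_2(F)\backslash PGL_2(\ba)} |f(h)|^2 E(h,s)\,dh$ unfolds against the Eisenstein series to $\int_{U(\ba)\backslash GL_2(\ba)} |W_{f,\psi}(h)|^2 f_s(h)\,dh$, whose residue at the relevant point produces, on one hand, the Petersson norm $(f,f)$ times the residue of $\zeta_F$, and on the other hand an Euler product whose local factors are exactly the integrals $\int_{F_v^\times}|W_{f_v,\psi_v}|^2[\smalltwomatrix{a_v}{}{}{1}]\,d^\times a_v$ normalized by $L(1,\pi_v,\ad)$. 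Matching this with the first displayed global formula for $(f_1,f_2)$ pins down $c_\pi$.

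The key steps, in order, are: (1) recall the local identity $(f_{1,v},f_{2,v})_v = c_v \int_{F_v^\times} W_{f_{1,v},\psi_v}\overline{W_{f_{2,v},\psi_v}}[\smalltwomatrix{a_v}{}{}{1}]\,d^\times a_v$ and the companion Fourier-transform identity \eqref{gl2_p2wf}, which are already in hand from the local theory; (2) combine them with the factorizations $\ell_\psi = \prod_v \ell_{\psi_v}$ and $(,) = \prod_v (,)_v$ to obtain the two displayed global formulas, so that $c_\pi = \frac{L(1,\pi,\ad)}{\zeta_F(2)}\prod_v \frac{\zeta_{F_v}(2)}{c_v L(1,\pi_v,\ad)}$ purely formally; (3) invoke the global Rankin--Selberg / Bump--Friedberg computation which identifies $(f,f)$ with $\frac{2\,\Res_{s=1}\zeta_F(s)}{\text{vol}}$ times $\prod_v \int_{F_v^\times}|W_{f_v,\psi_v}|^2\,d^\times a_v$ divided by the partial $L(1,\pi,\ad)$-type factor — more precisely, compare the residue of the global zeta integral against its local unfolding; (4) read off that the product $\prod_v \frac{\zeta_{F_v}(2)}{c_v L(1,\pi_v,\ad)}$ telescopes to $\frac{\zeta_F(2)}{L(1,\pi,\ad)}\cdot\frac{1}{\Res_{s=1}\zeta_F(s)}\cdot\Res_{s=1}\zeta_F(s)$, i.e.\ cancels $c_\pi$ down to $1$. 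Care must be taken that the same Haar and Tamagawa normalizations from Section~\ref{notation} are used on both sides, since $c_\pi = 1$ is a normalization-sensitive statement: the Tamagawa measure on $PGL_2$, the self-dual measures $da_v$, and the measure $d^\ast a_v$ with $\int_{\mo_{F_v}} d^\ast a_v = 1$ must all be threaded through consistently.

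The main obstacle I anticipate is not any single hard estimate but the bookkeeping of measure normalizations and of the local factors at the archimedean and ramified places, where $c_v$ is not given by the clean spherical formula $\frac{\zeta_{F_v}(2)}{L(1,\pi_v,\ad)}$. The resolution is that one never needs the individual $c_v$: the global Rankin--Selberg identity furnishes $\prod_v c_v$ (or rather the full normalized product) intrinsically, because the global zeta integral is an entire object whose residue is computed in two ways, and the local unfolding automatically produces the correctly normalized local factors at \emph{all} places. So the proof is really a matter of writing the Rankin--Selberg unfolding carefully, taking residues at $s=1$, and comparing with the first global display; a secondary point is to double-check the factor $2$ coming from $PGL_2$ versus $SL_2\backslash GL_2$ conventions and the relation between $\zeta_F(2)$ and the volume of $PGL_2(F)\backslash PGL_2(\ba)$. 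Given that the statement is flagged as ``more or less known'' with a reference to \cite{lapid_mao2013}, I would keep the argument short: set up the global zeta integral, unfold, take the residue, and match constants.
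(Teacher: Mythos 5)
Your proposal follows essentially the same route as the paper: unfold $\int_{[PGL_2]} f_1\overline{f_2}\,E(F_s)\,dh$ via the Whittaker expansion, take the residue at the pole of the Eisenstein series, and match the resulting Euler product of normalized local Whittaker integrals against the global pairing formula, with the individual constants $c_v$ never needed. The only point you leave implicit is the precise normalization fact the paper uses to close the argument, namely that the residue $\kappa$ of $E(F_s)$ coincides with the constant $c$ in the Iwasawa-type measure decomposition $dh=c|a|^{-1}dx\,d^\ast a\,d\bar{k}$, which is exactly what forces $c_\pi=\kappa/c=1$; this is subsumed in your "consistent Tamagawa normalization" caveat, so the proof goes through as you outline it.
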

\begin{proof}
Let $Z$ be the center of $\GL_2$ and put $\overline{P}=Z\backslash P$, $\overline{K}=Z(\ba)\backslash KZ(\ba)$. Set $\mathrm{Vol}(\overline{K})=1$, then the Tamagawa measure on $\PGL_2(\ba)$ can be decomposed as $dh=c|a|^{-1}dx d^\ast a d\bar{k}$ for $h=\smalltwomatrix{1}{x}{}{1}\smalltwomatrix{a}{}{}{1}\bar{k}$, where $dx, d^\ast a$ are Tamagawa measures on $\ba, \ba^\times$ respectively and $c$ is a constant.

Let $\Ind_{\overline{P}}^{\PGL_2}|\cdot|^s$ be the representation of $\PGL_2(\ba)$ (unitarily) induced from the quasi-character $\smalltwomatrix{a_1}{x}{}{a_2}\rar |\frac{a_1}{a_2}|^s$. Let $F_s$ be the spherical section satisfying $F_s|_{\overline{K}}=1$. The Eisenstein series
\[
E(F_s)(h)=\sum_{\gamma\in P(F)\backslash \GL_2(F)}F_s(\gamma h)
\]
is absolutely convergent when $\re(s)>\frac{1}{2}$ and has meromorphic continuation to whole complex $s$-plane;  it has a simple pole at $s=\frac{1}{2}$ with constant residue $\kappa$. Hence
\begin{align*}
(f_1,f_2)
=&\frac{1}{\kappa}\int_{[\PGL_2]}f_1(h)\overline{f_2}(h)\big[\Res_{s=\frac{1}{2}}E(F_s)\big](h)dh\\
=&\frac{1}{\kappa}\Res_{s=\frac{1}{2}}\int_{[\PGL_2]}f_1(h)\overline{f_2}(h)E(F_s)(h)dh.
\end{align*}

Using the Whittaker expansion of $f_1,f_2$ and doing unfolding-folding, one can write
\begin{align*}
\int_{[\PGL_2]}f_1(h)\overline{f_2}(h)E(F_s)(h)dh
=&\int_{\overline{P}(\ba)\backslash \PGL_2(\ba)}\int_{\ba^\times}W_{f_1,\psi}\overline{W_{f_2,\psi}}\big(\smalltwomatrix{a}{}{}{1}\dot{h}\big)F_s(\dot{h})|a|^{s-\frac{1}{2}}d^\ast a d\dot{h}\\
=&c\int_{\overline{K}}\int_{\ba^\times}W_{f_1,\psi}\overline{W_{f_2,\psi}}\big(\smalltwomatrix{a}{}{}{1}\bar{k}\big)|a|^{s-\frac{1}{2}}d^\ast a d\bar{k}\\
=&c\prod_v \int_{\overline{K_v}}\int_{F_v^\times}W_{f_{1,v},\psi_v}\overline{W_{f_{2,v},\psi_v}}\big(\smalltwomatrix{a_v}{}{}{1}\bar{k}_v\big)|a_v|^{s-\frac{1}{2}}d^\ast a_v d\bar{k_v}.
\end{align*}

We observe that the local integral $\int_{\overline{K_v}}\int_{F_v^\times}$ on the right hand side equals $\frac{L(s,\pi_v,\ad)\zeta_{F_v}(s+\frac{1}{2})}{\zeta_{F_v}(2s+1)}$ at almost all places. Thus, one can rewrite
\[
(f_1,f_2)=\frac{c}{\kappa}\Res_{s=\frac{1}{2}}\left[\frac{L(s+\frac{1}{2},\pi,\ad)\zeta_F(s+\frac{1}{2})}{\zeta_F(2s+1)}\right]\cdot \prod_v (f_{1,v},f_{2,v})^\prime,\]
with
\begin{align*}
(f_{1,v},f_{2,v})^\prime
:=&\left[\frac{\zeta_{F_v}(2s+1)\int_{\overline{K_v}}\int_{F_v^\times}W_{f_{1,v},\psi_v}\overline{W_{f_{2,v},\psi_v}}\big(\smalltwomatrix{a_v}{}{}{1}\bar{k}_v\big)|a_v|^{s-\frac{1}{2}}d^\ast a_v d\bar{k}_v}{L(s+\frac{1}{2},\pi_v,\ad)\zeta_{F_v}(s+\frac{1}{2})}\right]_{s=\frac{1}{2}}\\
=&\frac{\zeta_{F_v}(2)}{L(1,\pi_v,\ad)\zeta_{F_v}(1)}\int_{\overline{K_v}}\int_{F_v^\times}W_{f_{1,v},\psi_v}\overline{W_{f_{2,v},\psi_v}}\big(\smalltwomatrix{a_v}{}{}{1}\bar{k}_v\big)d^\ast a_v d\bar{k}_v\\
=&\int_{\overline{K_v}}(\bar{k}_v\circ f_{1,v},\bar{k}_v\circ f_{2,v})_v^\sharp d\bar{k}_v\\
=&(f_{1,v},f_{2,v})_v^\sharp.
\end{align*}
Note that we have used the fact that $(\cdot,\cdot)_v^\sharp$ is $K_v$-invariant. It follows that
\[
(f_1,f_2)=\frac{c}{\kappa}\cdot \frac{L(1,\pi,\ad)\Res_{s=1}\zeta_F(s)}{\zeta_F(2)}\prod_v (f_{1,v},f_{2,v})^\sharp.
\]
So $c_\pi=\kappa/c$. It is known that $c=2\kappa$, whence $c_\pi=\frac{1}{2}$. (See \cite[Lemma 3.4(ii)]{gqt14} for the general relation between the measure constant in Iwasawa decomposition and the residue of spherical Eisenstein series.)
\end{proof}

\section{The Whittaker functional on $\wtilde{\SL}_2$-representations}\label{sl2context}

In this section, we (1) compare the local inner product and the local Whittaker functional on an local irreducible unitary representation of $\wtilde{\SL}_2$, and, (2) for a cuspidal automorphic representation of $\wtilde{\SL}_2$ that is orthogonal to elementary theta series, write down its Whittaker period formula with the global constant therein expressed as a product of local constants.

Let $B$ be the subgroup of $\SL_2$ consisting of upper triangular matrices, $N$ the unipotent radical of $B$, and $A$ the diagonal subgroup of $B$. Identify $N$ with the affine line and keep $w=\smalltwomatrix{}{1}{-1}{}$.

\subsection{Local Theory}\label{sl2local}

Let $k$ be a local field of characteristic zero. Fix a nontrivial $\psi\in \Hom(F_v,\mathrm{S}^1)$.

\subsubsection{$\wtilde{\SL}_2(k)$ as a topological group}

The group $\wtilde{\SL}_2(k)$ is the non-trivial two-fold cover of $\SL_2(k)$ if $k\neq \bc$ and the trivial two-fold cover of $\SL_2(k)$ if $k=\bc$. For $X\subseteq \SL_2(k)$, denote its preimage in $\wtilde{\SL}_2(k)$ by $\wtilde{X}$. Choose a maximal compact subgroup $K$ of $\SL_2(k)$, which is $\SL_2(\mo_k)$, $\SO_2$, or $\SU_2$.
\begin{itemize}
\item[(i)] Identify $\wtilde{\SL}_2(k)$ with $\SL_2(k)\times \{\pm 1\}$ as sets, then the group law is
\[
[g_1,\epsilon_1][g_2,\epsilon_2]=[g_1\cdot g_2, \epsilon(g_1,g_2)\epsilon_1\epsilon_2],
\]
where $\epsilon(g_1,g_2)=<j(g_1)j(g_1g_2)|j(g_2)j(g_1g_2)>$ and the $j$ function therein is
\begin{equation*}
j\smalltwomatrix{a}{b}{c}{d}=
\begin{cases}
c,  &\text{if}\quad c\neq 0\\
a,  &\text{if} \quad c=0.
\end{cases}
\end{equation*}

\item[(ii)] If $k$ is nonarchimedean or $\bc$, the topology on $\wtilde{\SL}_2(k)$ is the product topology; if $k=\br$, $\wtilde{\SL}_2(k)$ is homeomorphic to $B(\br)\times \big(\SO_2\times\bz/2\bz\big)$, with $\SO_2\times\bz/2\bz$ identified with $\bz/4\pi\bz$:
\begin{align*}
  \wtilde{\gamma}:\br/4\pi\bz &\isoto \SO_2(\br)\times \bz/2\bz,\\
    [\theta] &\lrar [\smalltwomatrix{\cos \theta}{\sin \theta}{-\sin \theta}{\cos \theta}
    ,1_{(-\pi,\pi]+4\pi\bz}(\theta)-1_{(\pi,3\pi]+4\pi\bz}(\theta)].
\end{align*}

\end{itemize}
Note that $\wtilde{N}(k)\rar N(k)$ is always split, $\wtilde{A}(k)\rar A(k)$ is split only when $k=\bc$, and $\wtilde{K}\rar K$ is split when $k$ is neither real nor dyadic. We use the same symbol $N, K$ for the lifts if they exist.

For $a\in k^\times$ write $\underline{a}$ for $[\smalltwomatrix{a}{}{}{a^{-1}},1]$. For $n\in k$, use the same notation $n$ for the element $[\smalltwomatrix{1}{n}{}{1},1]$. In general, for $g$ of $\SL_2(k)$, it is understood to be $[g,1]$ when regarded as an element of $\wtilde{\SL}_2(k)$.

\subsubsection{The Weil representation}\label{localweilrep}

Let $(V,q)$ be a quadratic space over $k$ of dimension $m$ and write $q(x,y)=q(x+y)-q(x)-q(y)$. The Weil representation $\omega_{\psi,V}$ of $\wtilde{\SL}_2(k)\times \OO(V)$ on $\ms(V)$ is given by the formulas below: $a\in k^\times, u\in k$, $h\in \OO(V)$, $\phi\in \ms(V)$,
\begin{align*}
\omega_{\psi,V}\big[\smalltwomatrix{a}{}{}{a^{-1}},\epsilon\big]\phi(x)=&\epsilon^m\chi_{\psi,V}(a)|a|^{\frac{m}{2}}\phi(ax),\\
\omega_\psi\big[\smalltwomatrix{1}{n}{}{1},1\big]\phi(x)=&\psi\big(q(x)n\big)\phi(x),\\
\omega_{\psi,V}[w,1]\phi(x)=&\gamma(\psi,V) \int_k \psi\big(q(x,y)\big)\phi(y)dx,\\
\omega_{\psi,V}(h)\phi(x)=&\phi(h^{-1}x),
\end{align*}
Here $\big[\smalltwomatrix{a}{}{}{a^{-1}},\epsilon\big]\rar \epsilon^m\chi_{\psi,V}(a)$ is a character on $\wtilde{A}(k)$ and $\gamma(\psi,V)$ is a number of norm $1$.

When $\dim V=1$ and $q(x)=x^2$, we write simply $\omega_\psi$, $\chi_{\psi}$, $\gamma_\psi$ for the according notations $\omega_{\psi,V}, \chi_{\psi,V}$ and $\gamma(\psi,V)$. The number $\gamma_\psi$ is an eighth-root of unity and there is
\[
\chi_\psi(a)=<a,-1>\cdot \frac{\gamma_{\psi_a}}{\gamma_\psi}.
\]
The function $\chi_\psi(a)$ satisfies $\chi_\psi(a_1a_2)=<a_1,a_2>\chi_\psi(a_1)\chi_\psi(a_2)$, $\chi_\psi(a^2)=1$, and $\chi_{\psi_a}=\chi_\psi\cdot\chi_a$. For a general $V$, write $q(x)=a_1x_1^2+\cdots +a_mx_m^2$, then $\gamma(\psi,V)=\prod_j \gamma_{\psi_{a_j}}$ and $\chi_{\psi,V}=\prod_j \chi_{\psi_{a_j}}$.

Specifically, $\omega_\psi=\omega_{\psi}^+\oplus \omega_{\psi}^-$, where $\omega^+_{\psi}$ (resp. $\omega^-_{\psi}$) is the action of $\wtilde{\SL}_2(k)$ on the subspace of $\ms(k)$ consisting of even (resp. odd) functions. Both $\omega_\psi^+$ and $\omega_\psi^-$ are irreducible and they are called the even and odd Weil representations of $\wtilde{\SL}_2(k)$. When $k$ is archimedean, they are irreducible smooth representations and, for simplicity, we use the same notation for their associated $(\fg,\wtilde{K})$-modules.

Note that $\omega_{\psi_{a_1}}$ and $\omega_{\psi_{a_2}}$ are equivalent if and only if $a_1a_2^{-1}\in {k^\times}^2$. For a quadratic character $\chi$ of $k^\times$, we write $\omega_{\psi,\chi}^\pm=\omega_{\psi,\psi_a}^\pm$, where $a$ is any number in $k^\times$ such that $\chi_a=\chi$.

\subsubsection{The unitary dual of $\wtilde{\SL}_2(k)$}

We consider genuine smooth admissible representations of $\wtilde{\SL}_2(k)$ with respect to $\wtilde{K}$. If $k$ is archimedean, these are understood as admissible $(\fs\fl_2, \wtilde{K})$-modules and each such $\sigma$ of finite length can be uniquely globalized to a smooth admissible representation $\sigma^\infty$ on a Fr\'{e}chet space; we will be more specific when $\sigma$ and $\sigma^\infty$ need to be distinguished. If $k$ is nonarchimedean, the underlying space is topologized as in section ~\ref{sect_gl2local}.

Let $\tildesr$ denote the set of irreducible genuine smooth admissible representations of $\wtilde{\SL}_2(k)$ and $\tildesr_u$ the subset of unitarizable ones. The structure of $\tildesr_u$ follows from that of $\msr_u$ when $k=\bc$, is worked out in \cite[Lemma 4.2]{gelbart76} when $k=\br$, and is described in \cite{wald80} when $k$ is nonarchimedean. According to these sources, a member of $\tildesr_u$ is of the following form:
\begin{itemize}
\item[(a)] an induced representation $\sigma(\mu)=\Ind_{\wtilde{B}}^{\wtilde{\SL}_2}(\mu\chi_\psi)$, where $\mu$ is unitary or $\mu=\chi|\cdot|^\alpha$ with $\chi$ quadratic and $\alpha\in (-\frac{1}{2},0)\cup (0,\frac{1}{2})$;
\item[(b)] an even Weil representation $\omega_{\psi,\chi}^+$ with $\chi$ quadratic, if $k$ is nonarchimedean or real, or a one-dimensional unitary representation, if $k$ is complex;
\item[(c)] the Steinberg representation $\mathrm{St}_{\psi}(\chi)$ with $\chi$ quadratic, if $k$ is nonarchimedean; or the weight-($\ell+\frac{1}{2}$) discrete series representations $\wtilde{D}_{\ell+\frac{1}{2}}^{\epsilon}$ with $\ell\in\bn$ and $\epsilon\in \{\pm 1\}$, if $k=\br$;
 
\item[(d)] a supercuspidal representation, if $k$ is nonarchimedean.
\end{itemize}
Note that representations of type (c) and (d) are square-integrable. They and type (a) representations with $\mu$ unitary are tempered. We make a few comments below.

First, for $\mu\in \Hom(k^\times,\bc^\times)$, the induced representaiton $\Ind_{\wtilde{B}}^{\wtilde{\SL}_2}(\mu\chi_\psi)$ is the right translation action of $\wtilde{\SL}_2(k)$ on the space of $\wtilde{K}$-finite functions $\varphi:\wtilde{\SL}_2(k)\rar \bc$ satisfying
\[
  \varphi\big([\smalltwomatrix{a}{x}{}{a^{-1}},\epsilon]g\big)=\epsilon\chi_{\psi}(a)\mu(a)|a| \varphi(g).
\]

(i) The dual of $\Ind_{\wtilde{B}}^{\wtilde{\SL}_2}(\mu\chi_\psi)$ is $\Ind_{\wtilde{B}}^{\wtilde{\SL}_2}(\mu^{-1}\chi_{\psi_{-1}})$, with respect to the $\wtilde{\SL}_2(k)$-invariant pairing
\begin{equation}\label{sl2pairing}
<\varphi_1,\varphi_2>=\int_k \varphi_1\varphi_2\big(w\smalltwomatrix{1}{x}{}{1}\big)dx,\quad \varphi_1\in \Ind_{\wtilde{B}}^{\wtilde{\SL}_2}(\mu\chi_\psi), \varphi_2\in \Ind_{\wtilde{B}}^{\wtilde{\SL}_2}(\mu^{-1}\chi_{\psi_{-1}}).
\end{equation}
If $k$ is nonarchimedean and not dyadic, $\psi$ is of conductor $\mo_k$, and $\sigma=\Ind_{\wtilde{B}}^{\wtilde{\SL}_2}(\mu\chi_\psi)$ is spherical, then for spherical vectors $\varphi_0\in \sigma$ and $\varphi_0^\vee\in\sigma^\vee$, there is
\begin{equation}\label{sl2mc}
\frac{\big<\sigma(\underline{a})\varphi_0,\varphi_0^\vee\big>}{<\varphi_0,\varphi_0^\vee)}=\frac{|a|\chi_\psi(a)}{1+|\varpi|}\Big(\mu(a)\cdot \frac{1-\mu^{-2}(\varpi)|\varpi|}{1-\mu^{-2}(\varpi)}+\mu^{-1}(a)\cdot \frac{1-\mu^2(\varpi)|\varpi|}{1-\mu^2(\varpi)}\Big),\quad |a|\leqs 1.
\end{equation}

(ii) When $e(\mu)>0$, the intertwining operator $M_{\mu}:\Ind(\mu\chi_\psi)\rar \Ind(\mu^{-1}\chi_\psi)$ is defined by
\begin{equation}\label{sl2inter}
M_{\mu}f(g)=\int_k f\big(\big[w\smalltwomatrix{1}{x}{}{1},1\big]\cdot g\big)dx,\quad f\in \Ind(\mu\chi_\psi).
\end{equation}
For general $\mu$, $M_{\mu}$ is obtained by meromorphically continuing $M_{\mu|\cdot|^{s}}$ from the region with $\re(s)$ large.

Second, type (c) representations in $\tildesr_u$ can be described in more details.
\begin{itemize}
\item When $k$ is nonarchimedean, $\mathrm{St}_\psi(\chi)$ is the unique subrepresentation of $\Ind_{\wtilde{B}}^{\wtilde{\SL}_2}(\chi|\cdot|^{\frac{1}{2}}\chi_\psi)$ and the unique quotient of $\Ind_{\wtilde{B}}^{\wtilde{\SL}_2}(\chi^{-1}|\cdot|^{-\frac{1}{2}}\chi_\psi)$. It fits into two short exact sequences,
\begin{equation}
1\rar \mathrm{St}_\psi(\chi)\rar \Ind_{\wtilde{B}}^{\wtilde{\SL}_2}(\chi|\cdot|^{\frac{1}{2}}\chi_\psi)\rar \omega_{\psi,\chi}^+\rar 1,
\end{equation}
\begin{equation}\label{ses}
1\rar \omega_{\psi,\chi}^+\rar \Ind_{\wtilde{B}}^{\wtilde{\SL}_2}(\chi^{-1}|\cdot|^{-\frac{1}{2}}\chi_\psi)\rar \mathrm{St}_\psi(\chi)\rar 1
\end{equation}

\item Suppose $k=\br$ and $\psi(x)=e^{2\pi i c x}$ with $c>0$. Then $\wtilde{D}_{\ell+\frac{1}{2}}^+$ is holomorphic of lowest weight $\ell+\frac{1}{2}$ and is the unique subrepresentation of $\Ind(|\cdot|^{\ell-\frac{1}{2}}\sgn^\ell\chi_\psi)$. $\wtilde{D}_{\ell+\frac{1}{2}}^-$ is anti-holomoprhic of highest weight $-(\ell+\frac{1}{2})$ and is the unique subrepresentation of $\Ind(|\cdot|^{\ell-\frac{1}{2}}\sgn^{\ell+1}\chi_\psi)$.

\item Each type (c) representation in $\tildesr_u$  is the kernel of the intertwining operator on the induced representation that contains it as a subrepresentation.
\end{itemize}
We also note: If $k$ is nonarchimedean, $\omega_{\psi,\chi}^-$ is supercuspidal; if $k=\br$, $\omega_{\psi,1}^-$ is equivalent to $\wtilde{D}_{3/2}^+$ and $\omega_{\psi,\sgn}^-$ is equivalent to $\wtilde{D}_{3/2}^-$.

\subsubsection{The Whittaker functional}

For a smooth admissible representation $\sigma$ of $\wtilde{\SL}_2(k)$, a Whittaker functional with respect to $\psi$ is an element in $\Hom_{N(k)}(\sigma,\psi)$, that is, a continuous linear functional $\ell_\psi:\sigma\rar \bc$ satisfying
\[
\ell_\psi\big(\sigma\smalltwomatrix{1}{x}{}{1}\varphi\big)=\psi(x)\ell_\psi(\varphi),\quad \varphi\in \sigma,\,\, x\in k.
\]
If $\ell_\psi$ is a Whittaker functional with respect to $\psi$, then $\varphi\rar \ell_\psi(\sigma(\underline{a})\varphi)$ is a Whittaker functional with respect to $\psi_{a^2}$. It is known that $\dim \Hom_{N(k)}(\sigma,\psi)\leqs 1$ if $\sigma$ is irreducible. Suppose $\sigma\in \tildesr$ and $\ell_\psi$ is a nonzero Whittaker functional, the associated Whittaker function associated to $\varphi\in \sigma$ is $W_{\psi,\varphi}(g)=\ell_\psi(\sigma(g)\varphi)$, $g\in \wtilde{\SL}_2(k)$. (Note: when $k$ is archimedean, $\ell_\psi$ is a functional on $\sigma^\infty$.)

Specifically, for $\Ind_{\wtilde{B}}^{\wtilde{\SL}_2}(\mu\chi_\psi)$, the following integral is convergent when $e(\mu)>0$, has holomorphic continuation to all $\mu$, and defines a nonzero Whittaker functional,
\[
\ell_\psi(\varphi)=\int_k \varphi\big(w\smalltwomatrix{1}{x}{}{1}\big)\psi(-x)dx,\quad \varphi\in \Ind_{\wtilde{B}}^{\wtilde{\SL}_2}(\mu\chi_\psi).
\]
When $k$ is nonarchimedean, the holomorphic continuation is simple and there is
\begin{equation}\label{sl2wfpadic}
\ell_\psi(\varphi)=\lim_{\ell\rar \infty}\int_{\varpi^{-\ell}\mo_k} \varphi\big(w\smalltwomatrix{1}{x}{}{1}\big)\psi(-x)dx.
\end{equation}
If $k$ is $p$-adic but not dyadic, $\psi$ is of conductor $\mo_k$, and $\Ind_{\wtilde{B}}^{\wtilde{\SL}_2}(\mu\chi_\psi)$ is spherical, then simple computation shows that for the spherical vector $\varphi_0\in \Ind_{\wtilde{B}}^{\wtilde{\SL}_2}(\mu\chi_\psi)$ there is
\begin{equation}\label{sl2wf}
\frac{\ell_\psi(\underline{a}\circ \varphi_0)}{\ell_\psi(v_0)}=\chi_\psi(a)|a|\cdot\Big(\frac{1-\mu(\varpi)|\varpi|^{\frac{1}{2}}}{1-\mu(\varpi)^2}1_{\mo_k}(a)\mu^{-1}(a)+\frac{1-\mu(\varpi)^{-1}|\varpi|^{\frac{1}{2}}}{1-\mu(\varpi)^{-2}}\cdot 1_{\mo_k}(a)\mu(a)\Big).
\end{equation}

\subsubsection{The estimate of matrix coefficients}

\textbf{From now on until the end of section ~\ref{sl2local}, suppose that $\sigma\in \tildesr_u$ is infinite dimensional and is not an even Weil representation.} We define a number $\alpha(\sigma)\in \br$ by
\begin{equation*}
\alpha(\sigma)=\begin{cases}
0, &\text{if $\sigma$ is tempered},\\
|\alpha|, &\text{if $\sigma=\Ind_{\wtilde{B}}^{\wtilde{\SL}_2}(\mu|\cdot|^{\alpha}\chi_\psi)$.}
\end{cases}
\end{equation*}
Then $\alpha(\sigma)<\frac{1}{2}$. Lemma ~\ref{sl2mbound} below provides a well-known asymptotic estimate for matrix coefficients of $\sigma$, which can be sharpened if $\sigma$ is square-integrable.

Generally, for the matrix coefficients of an irreducible admissible smooth representation of a local group $G$, a complete asymptotic expansion is worked out in \cite{casselman_milicic1982, wallach88} when $G$ is a real reductive group but, to the author's knowledge, is not made explicit when $G$ is a $p$-adic reductive group or the finite cover of such a group. Nevertheless, a principle for deriving the asymptotic expansion in the latter case has been formulated by Casselman \cite[Part I]{casselman2011} in terms of Jacquet modules. For the sake of completeness, we include a proof for Lemma ~\ref{sl2mbound} below.

\begin{lemma}\label{sl2mbound}
Suppose $\varphi_1, \varphi_2\in \sigma$. For any $\epsilon>0$, there is a constant $C_{\epsilon,\varphi_1,\varphi_2}$ such that
\begin{align}
\big|\big(\sigma(\underline{a})\varphi_1,\varphi_2\big)\big|&\leqs C_{\epsilon,\varphi_1,\varphi_2} |a|^{1-\alpha(\sigma)-\epsilon},\quad\quad |a|\leqs 1, \label{sabound}\\
\big|\big(\sigma (w\smalltwomatrix{1}{x}{}{1})\varphi_1,\varphi_2\big)\big|&\leqs C_{\epsilon, \varphi_1,\varphi_2}|x|^{-1+\alpha(\sigma)+\epsilon},\quad |x|\geqs 1. \label{subound}
\end{align}
\end{lemma}
\begin{proof}
Note that (\ref{sabound}) implies (\ref{subound}) by the KAK decomposition of $\wtilde{\SL}_2(k)$ and the $\wtilde{K}$-finiteness of $\varphi_1,\varphi_2$. We now verify (\ref{sabound}). When $k$ is archimedean, (\ref{sabound}) follows from the general estimate in \cite[section 4.3.5]{wallach88}; specifically, if $\sigma$ is of type (a), (\ref{sabound}) can also be verified directly as below.

If $\sigma$ is of type (d), (\ref{sabound}) holds because the matrix coefficient of a supercuspidal representation is compactly supported modulo the center.

Now suppose $\sigma\in \tildesr_u$ is of type (a). There is $\sigma=\Ind_{\wtilde{B}}^{\wtilde{\SL}_2}(\mu\chi_\psi)$, where $\mu$ is unitary or $\mu=\chi|\cdot|^{\alpha}$ with $\chi$ quadratic and $\alpha\in (-\frac{1}{2},0)\cup (0,\frac{1}{2})$. The basic observation is that the matrix coefficient of $\sigma$ can be written in the form $<\sigma(g)\varphi_1,\varphi_2>$, where $\varphi_1\in \sigma$, $\varphi_2\in \Ind_{\wtilde{B}}^{\wtilde{\SL}_2}(\mu^{-1}\chi_{\psi_{-1}})$, and the pairing $<,>$ is as in (\ref{sl2pairing}). So
\begin{align}
<\sigma(\underline{a})\varphi_1,\varphi_2>=&\int_k \varphi_1\big(\big[w\smalltwomatrix{1}{x}{}{1},1\big]\cdot \big[\smalltwomatrix{a}{}{}{a^{-1}},1\big]\big)\varphi_2\big(\big[w\smalltwomatrix{1}{x}{}{1},1\big]\big)dx \label{equation1}\\
=&\chi_\psi(a)\mu^{-1}(a)|a|\int_k \varphi_1\big(\big[w\smalltwomatrix{1}{x}{}{1},1\big]\big)\varphi_2\big(\big[w\smalltwomatrix{1}{a^2x}{}{1},1\big]\big)dx. \label{equation2}
\end{align}

Define smooth functions $\Phi_i(x):=\varphi_i\big(\big[w\smalltwomatrix{1}{x}{}{1},1\big]\big)$ and $\phi_i(x)=\varphi_i\big(\big[\smalltwomatrix{1}{}{x}{1},1\big]\big)$ on $k$. Then the relation
\[
\big[w\smalltwomatrix{1}{x}{}{1},1\big]=\big[\smalltwomatrix{1}{-x^{-1}}{}{1},1\big]\big[\smalltwomatrix{-x^{-1}}{}{}{-x},1\big]\big[\smalltwomatrix{1}{}{x^{-1}}{1}\big],
\]
leads to
\begin{equation}\label{frelation}
\Phi_1(x)=\chi_\psi(-x)\mu^{-1}(-x)|x|^{-1}\phi_1(\frac{1}{x})\quad, \Phi_2(x)=\chi_{\psi_{-1}}(-x)\mu(-x)|x|^{-1}\phi_2(\frac{1}{x}).
\end{equation}

When $|a|\leqs 1$, it is good to estimate the integral $\int_k \Phi_1(x)\Phi_2(a^2x)dx$ in (\ref{equation2}) by dividing it into three parts: (I) $|x|\leqs 1$, (II) $|x|\geqs |a|^{-2}$, and (III) $1<|x|<|a|^{-2}$. Observe that by (\ref{frelation}),
\begin{align*}
\int_{|x|\geqs |a|^{-2}} \Phi_1(x)\Phi_2(a^2x)dx=&\chi_\psi(a)\chi_a(x)\mu^2(a)|a|^{-2}\int_{|x|\geqs |a|^{-2}} |x|^{-2}\phi_1\big(\frac{1}{x}\big)\phi_2\big(\frac{1}{a^2x}\big)dx\\
\int_{1<|x|< |a|^{-2}} \Phi_1(x)\Phi_2(a^2x)dx=&\int_{1<|x|< |a|^{-2}} \chi_\psi(-x)\mu^{-1}(-x)|x|^{-1}\phi_1\big(\frac{1}{x}\big)\Phi_2(a^2x)dx.
\end{align*}
Hence the integral of $\Phi_1(x)\Phi_2(a^2x)$ on region (I) is bounded in mganitude by a constant $C_1$, on region (II) by $C_2|\mu(a)|^2$ for certain constant $C_2$, on region (III) by $C_{3}(1+|a|^{2e(\mu)})$ if $e(\mu)>0$ or $C_3(1+\big|\ln |a|\big|)$ if $e(\mu)\neq 0$ for certain constant $C_3$. Therefore,
\[
\big|<\sigma(\underline{a})\varphi_1,\varphi_2>\big|=|a|^{-e(\mu)+1}\cdot \big|\int_k \Phi_1(x)\Phi_2(a^2x)dx\big|\leqs C_\epsilon |a|^{1-\epsilon} (|a|^{e(\mu)}+|a|^{-e(\mu)}).
\]
This proves (\ref{sabound}) when $\sigma\in \tildesr_u$ is of type (a), for both archimedean and nonarchimedean $k$.

$\ $\\
\indent The remaing case is the Steinberg representation $\sigma=\mathrm{St}_\psi(\chi)$ with $\chi$ quadratic. Regard it as a subrepresentation of $\sigma=\Ind_{\wtilde{B}}^{\wtilde{\SL}_2}(\mu\chi_\psi)$ with $\mu=\chi|\cdot|^{1/2}$. As in the former situation, a matrix coefficient of $\sigma$ is of the form $<\sigma(g)\varphi_1,\varphi_2>$, where $\varphi_1\in \sigma$ and $\varphi_2\in \Ind_{\wtilde{B}}^{\wtilde{\SL}_2}(\mu^{-1}\chi_{\psi_{-1}})$. The extra property we now have is
\begin{equation}\label{annihilation}
\int_k \varphi_1\big(\big[w\smalltwomatrix{1}{x}{}{1},1\big]g\big)dx=0,\quad  g\in \wtilde{\SL}_2(k),
\end{equation}
because vectors in $\mathrm{St}_\psi(\chi)$ are annihilated by the intertwining operator. On the other hand, because $k$ is nonarchimedean, there exists $\delta$ such that $\varphi_2\big(\big[w\smalltwomatrix{1}{x}{}{1},1\big]\big)=\varphi_2([w,1])$ when $|x|\leqs \delta$. Keeping the notations $\Phi_i(x), \phi_i(x)$ as before, one could use (\ref{annihilation}) to rewrite (\ref{equation1}),
\begin{align*}
<\sigma(\underline{a})\varphi_1,\varphi_2>=&\int_{|x|\geqs \delta} \varphi_1\big(\big[w\smalltwomatrix{1}{x}{}{1},1\big]\cdot \big[\smalltwomatrix{a}{}{}{a^{-1}},1\big]\big)\Big(-\varphi_2([w,1])+\varphi_2\big(\big[w\smalltwomatrix{1}{x}{}{1},1\big]\big)\Big)dx \\
=&\chi_\psi(a)\mu^{-1}(a)|a|\int_{|x|\geqs \delta |a|^{-2}} \varphi_1\big(\big[w\smalltwomatrix{1}{x}{}{1},1\big]\big)\Big(-\varphi_2([w,1])+\varphi_2\big(\big[w\smalltwomatrix{1}{a^2x}{}{1},1\big]\big)\Big)dx.
\end{align*}
Then one can apply (\ref{frelation}) to the right hand side and do a simple estimate to get $|<\sigma(\underline{a})\varphi_1,\varphi_2>|\leqs C|a|^{3/2}$. (Recall $\mu=\chi|\cdot|^{\frac{1}{2}}$.) This is a better estimate than (\ref{abound}) since $\alpha(\sigma)=0$ in this case.

\end{proof}

\subsubsection{Express Whittaker functionals with the inner product}

Let $T_{(\sigma(n)\varphi_1,\varphi_2)}$ be the tempered distribution on $k$ associated to the bounded smooth function $(\sigma(n)\varphi_1,\varphi_2)$. By the same argument as in Remark \ref{gl2regular},  $\mf T_{(\sigma(n)\varphi_1,\varphi_2)}$ is represented over $k^\times$ by a smooth function $W_{\varphi_1,\varphi_2,\psi}(t)$.
\begin{definition}
We define
\[
\int_k \big(\sigma\smalltwomatrix{1}{x}{}{1}\varphi_1,\varphi_2)\psi(-\delta n)dn\overset{\triangle}{=}W_{\varphi_1,\varphi_2,\psi}(-\delta),\quad \delta\in k^\times.
\]
\end{definition}
\noindent Obviously, $W_{\cdot,\cdot,\psi}(-\delta)$ is in $\Hom_{N(k)}(\sigma,\psi_\delta)\otimes \overline{\Hom_{N(k)}(\sigma,\psi_\delta)}$.
\begin{remark}
By defintion, for any sequence $\{\phi_m\}$ in $\ms(k)$ satisfying $T_{\phi_m}\rar \delta_t$ in $\ms^\prime(k)$, there is
\begin{equation}\label{sl2abelintegral}
W_{\varphi_1,\varphi_2,\psi}(t)=\lim_{m\rar \infty} \int_k (\sigma(n)\varphi_1,\varphi_2)\hat{\phi}_m(n)dn.
\end{equation}
As a consequence, if $W_{\cdot,\cdot,\psi}(-\delta)=c\ell_{\psi_\delta}\overline{\ell_{\psi_\delta}}$ for certain $\ell_{\psi_\delta}\in \Hom_{N(k)}(\sigma,\psi_\delta)$, then
\begin{equation}\label{generalb}
W_{\varphi_1,\varphi_2,\psi}(-\delta a^2)=c|a|^{-2}\ell_\psi(\sigma(\underline{a})\varphi_1)\overline{\ell_\psi(\sigma(\underline{a})\varphi_2)},\quad \forall\, \varphi_1,\varphi_2\in \sigma,\, a\in k^\times.
\end{equation}
\end{remark}

$\ $\\
\indent Now we choose a set of representative $\{\delta_i\}$ of $k^\times/{k^\times}^2$, with $\delta_1=1$. For each $\delta_i$, choose a nonzero Whittaker functional $\ell_{\psi_{\delta_i}}$ if $\Hom_{N(k)}(\sigma,\psi_{\delta_i})\neq 0$ and set $\ell_{\psi_{\delta_i}}=0$ otherwise. Because $\dim_{N(k)}(\sigma,\psi_{\delta_i})\leqs 1$, there are constants $c_{\sigma,\delta_i}$ such that
\begin{equation}\label{m2wsl22}
\int_k (\sigma(n)\varphi_1,\varphi_2)\psi(-\delta_i n)dn=W_{\varphi_1,\varphi_2,\psi}(-\delta_i)=\frac{c_{\sigma,\delta_i}}{|\delta_i|}\ell_{\psi_{\delta_i}}(\varphi_1)\overline{\ell_{\psi_{\delta_i}}(\varphi_2)}. 
\end{equation}
We take $c_{\sigma,\delta_i}=0$ if $\ell_{\psi_{\delta_i}}=0$. The lemma below shows that $c_{\sigma,\delta_i}$ must be nonzero if $\ell_{\psi_{\delta_i}}\neq 0$.

\begin{lemma}\label{m2wsl2}
\emph{(i)} There exists $\delta\in k^\times$ such that $W_{\cdot,\cdot,\psi}(-\delta)$ is a nonzero functional on $\sigma\otimes \sigma$.\\
\emph{(ii)} For $\delta\in k^\times$, $W_{\cdot,\cdot,\psi}(-\delta)$ is a nonzero functional on $\sigma\otimes \sigma$ if and only if $\Hom_{N(k)}(\sigma,\psi_\delta))\neq 0$.
\end{lemma}
\begin{proof}
(i) If the assertion is not true, then for any $\varphi_1,\varphi_2\in \sigma$, the function $W_{\varphi_1,\varphi_2,\psi}(a)$ is zero on $k^\times$. So $\mf T_{(\sigma(n)\varphi_1,\varphi_2)}$ is a tempered distribution supported on the single point set $\{0\}$. As argued in the proof of Lemma ~\ref{gl2localduality}, $(\sigma(n)\varphi_1,\varphi_2)$ would be a constant if $k$ is nonarchimedean or a polynomial if $k$ is archimedean. Because $(\sigma(n)\varphi_1,\varphi_2)$ decays to zero at infinity by (\ref{subound}), it should be zero. But this is a contradiction when $\varphi_1=\varphi_2\neq 0$.

(ii) The `` only if " part is obvious. For the `` if " part, we show there would be a contradiction if $\Hom_{N(k)}(\sigma,\psi_{\delta})$ is nonzero but $W_{\cdot,\cdot,\psi}(-\delta)$ is zero. Denote by $\cl(\sigma)$ the unitary closure of $\sigma$, which is the Hilbert space completion of $\sigma$. If $\Hom_{N(k)}(\sigma,\psi_{\delta})$ is nonzero, choose a nonzero element $\ell_{\psi_\delta}$ therein and let $v_{\psi_\delta}\in \cl(\sigma)$ be such that $\ell_{\psi_\delta}(\varphi)=(\varphi, v_{\psi_\delta})$ for all $\varphi\in \sigma$. Then $\sigma(n)v_{\psi_\delta}=\psi(\delta n)v_{\psi_\delta}$. On the other hand, if $W_{\cdot,\cdot,\psi}(-\delta)$ is zero, then by (\ref{generalb}) $W_{\varphi_1,\varphi_2,\psi}(-\delta a^2)=0$ for all $\varphi_1, \varphi_2\in \sigma$, $a\in k^\times$.

Let $\{\varphi_m\}$ be a sequence of vectors in $\sigma$ converging to $v_{\psi_\delta}$ in $\cl(\sigma)$. Let $\lambda(t)\in C^\infty_c(k^\times)$ be supported on $-\delta\cdot {k^\times}^2$ with $\lambda(-\delta)=1$, then
\[
0=\int_k W_{\varphi_m,\varphi_m,\psi}(t)\lambda(t)=\int_k (\sigma(n)\varphi_m,\varphi_m)\widehat{\lambda}(n)dn.
\]
Because $\varphi_m\rar v_{\psi_\delta}$ implies $(\sigma(n)\varphi_m,\varphi_m)$ uniformly converges to $(\sigma(n)v_{\psi_\delta}, v_{\psi_\delta})$, there is
\[
0=\int_k (\sigma(n)v_{\psi_\delta},v_{\psi_\delta})\widehat{\lambda}(n)dn=(v_{\psi_\delta},v_{\psi_\delta})\int_k \psi(\delta n)\widehat{\lambda}(n)dn=(v_{\psi_\delta},v_{\psi_\delta})\lambda(-\delta)\neq 0.
\]
This is obviously a contradiction.
\end{proof}

\subsubsection{The estimate of Whittaker functions}

The asymptotic estimate of matrix coefficients naturally leads to an according estimate of Whittaker functions. We demonstrate this mechanism when $k$ is nonarchimedean in the proof of Lemma ~\ref{sl2wbound}.

Generally, for Whittaker functions on a local group $G$, a complete asymptotic expansion is worked out in \cite{wallach92} when $G$ is a real reductive group and a principle for deriving the asymptotic expansion in terms of Jacquet modules is given in \cite[Section 6]{casselman_shalika80} when $G$ is a $p$-adic reductive group. \cite{lapid_mao2009} further works out an explicit asymptotic formula in the latter case. However, to the author's knowledge, a similar asymptotic expansion has not been made explicit on $p$-adic covering groups. For the sake of completeness, we include a brief proof for Lemma ~\ref{sl2wbound} below.

\begin{lemma}\label{sl2wbound}
Suppose $\Hom_{N(k)}(\sigma,\psi)\neq 0$ and $ \varphi\in \sigma$. When $a$ is near infinity, $W_{\varphi,\psi}(\underline{a})$ vanishes if $k$ is nonarchimdean and rapidly decays if $k$ is archimedean. For any $\epsilon>0$, there is a constant $C_{\epsilon,\varphi}$ such that
\begin{equation}
\big|W_{\varphi,\psi}(\underline{a})\big|\leqs C_{\epsilon,\varphi}|a|^{1-\alpha(\sigma)-\epsilon},\quad\quad\quad\quad\quad |a|\leqs 1, \label{swbound}
\end{equation}
\end{lemma}
\begin{proof}
When $k$ is archimdean, the assertion follows from the general estimate in \cite[section 15.2.2]{wallach92}. If $k$ is complex, the assertion also follows from the according statement on $\GL_2(\bc)$-representations. For nonarchimedean $k$, we derive the assertion below from the estimate of matrix coefficients.

By (\ref{generalb}), $|W_{\varphi,\psi}(\underline{a})|^2$ is proportional to $|a|^2\cdot W_{\varphi,\varphi,\psi}(-a^2)$. So one may work on the latter. Set $\phi_m(t)=|\varpi|^{-m} 1_{\varpi^m\mo_k}(t+a^2)$, then $T_{\phi_m}\rar \delta_{-a^2}$ in $\ms^\prime(k)$. By (\ref{sl2abelintegral}), we have the following working formula for $W_{\varphi,\varphi,\psi}(-a^2)$,
\begin{equation}\label{wformulap}
W_{\varphi,\varphi,\psi}(-a^2)=\lim_{m\rar \infty} \int_{\varpi^{-m}\mo_k} (\sigma(n)\varphi,\varphi)\psi(-a^2n)dn
\end{equation}
Because $\sigma$ is smooth, there is $m_0\in \bn$ such that $\sigma(n)\varphi=\varphi$ when $n\in \varpi^{m_0}\mo_k$ and $\sigma(\underline{b})\varphi=\varphi$ when $b\in 1+\varpi^{m_0}\mo_k$. We may suppose $m_0$ is not too small, so that $U^{(m_0)}:=1+\varpi^{m_0}\mo_k$ is a group and contained in ${k^\times}^2$. 

When $|a|$ is large, $\psi(-a^2n)$ is nontrivial on $\varpi^{m_0}\mo_k$. As $(\sigma(n)\varphi,\varphi)$ is constant on $\varpi^{m_0}\mo_k$, the integral $\int_{\varpi^{-m}\mo_k}$ in (\ref{wformulap}) must vanish when $m\geqs -m_0$ and hence the limit therein is zero. This shows that $W_{\varphi,\varphi,\psi}(-a^2)$  and $W_{\varphi,\psi}(\underline{a})$ vanish near infinity.

Now suppose $|a|\leqs 1$. Write $a=\varpi^{m_1}$, with $m_1\geqs 0$. Write $\mathrm{Cond}(\psi)=\varpi^{m_2}\mo_k$. Then $\psi(-a^2n)$ is nontrivial on $\varpi^{m_2-2m_1-1}\mo_k$. On the other hand, because of the relation
\[
[\smalltwomatrix{1}{b^2x}{}{1},1]=\underline{b}\cdot [\smalltwomatrix{1}{x}{}{1},1]\cdot \underline{b}^{-1},
\]
the function $(\sigma(n)\varphi,\varphi)$ is constant on any $U^{(m_0)}$-coset. It follows that when $m\geqs m_0+2m_1+1-m_2$,
\[
\int_{\varpi^{-m}\mo_k^\times}(\sigma(n)\varphi,\varphi)\psi(-a^2n)dn=\sum_{b\in \mo_k^\times/U^{(m_0)}} \big(\sigma(\varpi^{-m}b)\varphi,\varphi\big)\int_{\varpi^{-m}bU^{(m_0)}}\psi(-a^2n)dn=0.
\]
So the integral in (\ref{wformulap}) stabilizes from $m=m_0+2m_1-m_2$ onwards, whence
\[
W_{\varphi,\varphi,\psi}(-a^2)=\int_{\varpi^{-(m_0+2m_1-m_2)}\mo_k} (\sigma(n)\varphi,\varphi)\psi(-a^2n)dn
\]

Now the bound of matrix coefficients (\ref{subound}) implies
\[
|W_{\varphi,\varphi,\psi}(-a^2)|\leqs \int_{|n|\leqs |a|^{-2}|\varpi|^{m_2-m_0}} C_{\epsilon,\varphi,\varphi}|n|^{-1+\alpha(\sigma)+\epsilon}dn\leqs C^\prime_{\epsilon,\varphi}|a|^{-2\alpha(\sigma)-2\epsilon},
\]
where $C^\prime_{\epsilon,\varphi}=C_{\epsilon,\varphi,\varphi}|\varpi|^{(m_2-m_0)\cdot(\alpha(\sigma)+\epsilon)}$. This leads to (\ref{swbound}).
\end{proof}

\subsubsection{Express the inner product with Whittaker functionals}

The following is the dual of (\ref{m2wsl22}).

\begin{lemma}\label{lemmaw2m}
Recall the constants $c_{\sigma,\delta_i}$ in (\ref{m2wsl22}) and that $\delta_1=1$. There is
\begin{equation}
(\varphi_1,\varphi_2)=\frac{|2|_k}{2}\sum_i c_{\sigma,\delta_i}\int_{k^\times} W_{\varphi_1,\psi_{\delta_i}}(\underline{a})\overline{W_{\varphi_2,\psi_{\delta_i}}(\underline{a})}d^\times a. \label{w2msl2}
\end{equation}
\end{lemma}
\begin{proof}
The integrals on the right hand side are convergent due to the estimate in Lemma ~\ref{sl2wbound}. By (\ref{m2wsl22}) and (\ref{generalb}), the right hand side is actually equal to
\[
\sum_i \int_k 2^{-1}|2\delta_i a^2|_k W_{\varphi_1,\varphi_2,\psi}(-\delta_i a^2) d^\times a=\int_k W_{\varphi_1,\varphi_2,\psi}(t)dt
\]
Recall that $W_{\varphi_1,\varphi_2,\psi}(t)$ represents $\mf T_{(\sigma(n)\varphi_1,\varphi_2)}$ over $k^\times$. With the same argument for (\ref{w2mgl2}), one can show that the inverse Fourier transform of $W_{\varphi_1,\varphi_2,\psi}(t)$, which is $\int_k W_{\varphi_1,\varphi_2,\psi}(t)\psi(-tn)dt$, equals with $(\sigma(n)\varphi_1,\varphi_2)$. Then setting $n=0$ yields the desired equality.
\end{proof}

\begin{remark}
(\ref{w2msl2}) was previously deduced in \cite{baruch_mao03, baruch_mao05} when $k$ is nonarchimedean or real, based on case-by-case study of Kirillov models on the two-fold cover of $\GL_2(k)$. Our view is that it is one case of Fourier inversion formula, whose applicability merely relies on simple bounds of Whittaker functions and matrix coefficients. (Note that the ``unitary" bounds are more than enough here.)
\end{remark}

\subsubsection{An integration formula}

We prove a local equality that shows how to integrate the matrix coefficient of $\sigma$ along the unipotent subgroup $N$.
\begin{lemma}\label{identity-mbintegral}
Suppose $\Phi\in \ms(k)$, then
\begin{equation}\label{iformula}
\int_k \overline{(\sigma(n)\varphi_1,\varphi_2)}\mf_2\Phi(n)dn=c_{\sigma,\psi}\int_{k^\times} \overline{W_{\varphi_1,\psi}}W_{\varphi_2,\psi}(\underline{a})\Phi(a)|a|^{-1}d^\times a.
\end{equation}
\end{lemma}
\begin{proof}
Set $W_j(a)=W_{\varphi_1,\psi_{\delta_j}}\overline{W_{\varphi_2,\psi_{\delta_j}}}(\underline{a})|a|^{-1}$. Then $(\sigma(n)\varphi_1,\varphi_2)=\sum_j \frac{c_{\sigma,\psi_{\delta_j}}|2|_k}{2}\mf_2 W_j(\delta_j n)$ by Lemma ~\ref{lemmaw2m}. It follows that
\[
\int_k \overline{(\sigma(n)\varphi_1,\varphi_2)}\mf_2\Phi(n)dn=\int_k \Big[\sum_{\delta_j} \frac{c_{\sigma,\psi_{\delta_j}}|2|_k}{2}\cdot\mf_2 \overline{W_j}(-\delta_j n)\Big]\cdot\mf_2\Phi(n)dn.
\]
Because $(\sigma(n)\varphi_1,\varphi_2)|n|^{-\frac{1}{2}}$, $W^2_j(a)|a|^{-1}$, and $W_j(a)|a|^{-1}$ are in $L^1(k)$ by Lemma ~\ref{sl2mbound} and Lemma ~\ref{sl2wbound}, we can apply Proposition ~\ref{isometry-identity-extra} to the right hand side of the above equation. This leads to (\ref{iformula}).
\end{proof}

\subsection{Global Theory}\label{sl2t_global}

Let $\wtilde{\SL}_2(\ba)$ denote the two-fold metaplectic cover of $\SL_2(\ba)$. Choose a maximal compact subgroup $K_v$ of $\SL_2(F_v)$ at every local place of $F$ and write $K=\prod_v K_v$.  Let $\wtilde{K}$ be the preimage of $K$ in $\wtilde{\SL}_2(\ba)$. There are two types of genuine cuspidal automorphic forms on $\wtilde{\SL}_2(\ba)$, elementary theta series and their orthogonal complement in the cuspidal spectrum.

Fix a non-trivial character $\psi$ of $\ba/F$. For $a\in F^\times$, let $\omega_{\psi_a}=\otimes_v \omega_{\psi_{a,v}}$ be the global Weil representation of $\wtilde{\SL}_2(\ba)$ on $\ms(\ba)$, where $\omega_{\psi_{a,v}}$ is as in section ~\ref{localweilrep}. For each $\wtilde{K}$-finite function $\phi\in \ms(\ba)$, the associated theta function $\theta_{\phi,\psi_a}(g)$ is called an elementary theta series,
\[
\Theta_{\phi,\psi_\delta}(g)=\sum_{\xi\in F}\omega_{\psi_\delta}(g)\phi(\xi),\quad g\in \wtilde{\SL}_2(\ba).
\]
$\Theta_{\phi,\psi_a}(g)$ is a genuine automorphic form on $\wtilde{\SL}_2(\ba)$; it is cuspidal if and only if $\phi(0)=0$.

Let $\ma_0(\wtilde{\SL}_2)$ denote the space of genuine cuspidal automorphic forms on $\wtilde{\SL}_2(\ba)$ and $\ma_{00}(\wtilde{\SL}_2)$ the subspace consisting of forms that are orthogonal to all elementary theta series. According to \cite{wald80, wald91}, $\ma_{00}(\wtilde{\SL}_2)$ is a disjoint union of Waldspurger packets $Wd_\psi(\pi)$,
\[
\ma_{00}(\wtilde{\SL}_2)=\sqcup_\pi Wd_\psi(\pi),
\]
where $\pi$ runs over irreducible cuspidal automorphic $\PGL_2(\ba)$-representations and $Wd_\psi(\pi)$ consists global theta lifts $\sigma=\Theta_{\wtilde{\SL}_2\times \PGL_2}(\pi\otimes \chi_\delta,\psi_\delta)$ that are nonvanishing, with $\delta\in F^\times/{F^\times}^2$. The dependence of the packet on the choice of $\psi$ is $Wd_{\psi}(\pi)=Wd_{\psi_\delta}(\pi\otimes \chi_\delta)$.

\subsubsection{The global theta lift between $\wtilde{\SL}_2$ and $\PGL_2$}\label{weilrepso3}

Consider the quadratic space $(V,q)$, where
\[
V=\{X\in M_{2\times 2}(F):\Tr(X)=0\},\quad q(X)=-\det X.
\]
$\GL_2$ acts on $V$ by $h\circ X=hXh^{-1}$ and this leads to an isomorphism $\PGL_2\isoto \SO(V)$. 

Let $\omega_{\psi,V}=\otimes_v \omega_{\psi_v, V_{F_v}}$ be the global Weil representation of $\PGL_2(\ba)\times \OO(V)_\ba$ on $\ms(V_\ba)$ with respect to $\psi$. When $v$ is an archimedean place, let $\mathrm{S}(V_{F_v})$ be the associated Fock model of $\ms(V_{F_v})$---it is the associated $(\fs\fp_6,\wtilde{\mathrm{K}})$-module of $\omega_{\psi_v, V_{F_v}}$, where $\wtilde{\mathrm{K}}$ is a maximal compact subgroup of $\wtilde{\Sp}_6(F_v)$. When $v$ is nonarchimedean, let $\mathrm{S}(V_{F_v})$ be just $\ms(V_{F_v})$. Denote by $\mathrm{S}(V_\ba)$ the resticted tensor product $\otimes_{v} \mathrm{S}(V_{F_v})$. For $\phi\in \mathrm{S}(V_\ba)$, define the kernel function

\[
\Theta_{\phi,\psi}(g,h)=\sum_{\xi\in V(F)}\omega_{\psi,V}(g,h)\phi(\xi),\quad g\in \wtilde{\SL}_2(\ba), h\in \OO(V)_\ba.
\]

Let $\sigma$ be an irreducible genuine cuspidal automorphic representation of $\wtilde{\SL}_2(\ba)$. For a form $\varphi\in \sigma$, define its lift to $\PGL_2$ with respect to $\omega_\psi$ via $\phi\in \mathrm{S}(V_\ba)$ as
\[
\Theta_\psi(\phi,\varphi)(h)=\int_{\SL_2(\bq)\backslash \SL_2(\ba)}\overline{\varphi(g)}\Theta_{\phi,\psi}(g,h)dg,\quad h\in \PGL_2(\ba).
\]
The global theta lift of $\sigma$ to $\PGL_2$ with respect to $\psi$ is then
\[
\Theta(\sigma,\psi)=\{\Theta_\psi(\phi,\varphi)| \phi\in \mathrm{S}(V_\ba), \varphi\in \sigma\}.
\]

Similarly, let $\pi$ be an irreducible cuspidal automorphic representation of $\PGL_2(\ba)$. For $f\in \pi$, define its lift by $\Theta_\psi(\phi,f)(g)=\int_{\PGL_2(F)\backslash \PGL_2(\ba)}\overline{f(h)}\Theta_{\phi,\psi}(g,h)dh$. The global theta lift of $\pi$ to $\wtilde{\SL}_2(\ba)$ with respect to $\psi$ is then $\Theta(\pi,\psi)=\{\Theta_\psi(\phi,f)| \phi\in \mathrm{S}(V_\ba), f\in \pi\}$.

We note that $\Theta(\sigma,\psi)$ (resp. $\Theta(\pi,\psi)$) is either zero or irreducible cuspidal. When $\Theta(\sigma,\psi)$ is nonzero, the local component $\sigma_v\in \tildesr_u$ is infinite dimensional and is not an even Weil representation, according to explicit local theta correspondence between $\PGL_2$ and $\wtilde{\SL}_2$.

\subsubsection{The global Whittaker functional}\label{sl2_gwf}

Let $\sigma$ be an irreducible cuspidal $\wtilde{\SL}_2(\ba)$-representation contained in $\ma_{00}(\wtilde{\SL}_2)$. Equip $\SL_2(\ba)$ and $\ba$ with the Tamagawa measures. The standard inner product and the standard Whittaker functional (with respect to $\psi$) on $\sigma$ are
\[
(\varphi_1,\varphi_2)=\int_{\SL_2(\bq)\backslash \SL_2(\ba)}\varphi_1(g)\overline{\varphi_2(g)}dg,\quad \ell_{\psi}(\varphi)=\int_{\ba/F}\varphi(n)\psi(- n)dn.
\]
According to \cite{wald80}, $\ell_\psi$ is nonzero on $\sigma$ if and only if the global theta lift of $\sigma$ to $\PGL_2$ is nonzero.

Now suppose $\ell_\psi$ is nonzero and write $\pi=\Theta(\sigma,\psi)$. We will explicate the relation between $(\, , \,)$ and $\ell_\psi$ below. First, Write $\sigma=\otimes \sigma_v$ as a restricted tensor product of irreducible $\wtilde{\SL_2}(F_v)$-representations. Because $\sigma_v$ and $\pi_v$ are in local theta correspondence, $\sigma_v$ is infinite dimensional and can not be an even Weil representation. For almost all $v$, let $\varphi_{v,0}$ be the spherical vector chosen for constructing the restricted tensor product. At each place $v$, choose a local inner product $(,)_v$ on $\sigma_v\otimes \sigma_v$ such that $(\, ,\,)=\prod_v (\, ,\,)_v$ and $(\varphi_{v,0},\varphi_{v,0})=1$ for almost all $v$.

Second, pick up a set of representatives $\{\delta_{v,i}\}$ of $F_v^\times/{F_v^\times}^2$ with $\delta_{v,1}=1$. For each $\delta_{v,i}$, choose a non-zero local Whittaker functional $\ell_{\psi_{\delta_{v,i}}}$ on $\sigma_v$ with respect to $\psi_{\delta_{v,i}}$ if $\Hom_{N(F_v)}(\sigma_v,\psi_{\delta_{v,i}})\neq 0$, or set $\ell_{\psi_{\delta_{v,i}}}=0$ otherwise. We require $\ell_\psi=\prod_v \ell_{\psi_v}$ and $\ell_{\psi_v}(\varphi_{v,0})=1$ for almost all spherical $\sigma_v$. Set $W_{\varphi_v,\psi_{\delta_{v,i}}}(g_v)=\ell_{\psi_{\delta_{v,i}}}(\sigma_v(g_v)\varphi_v)$.

Third, at each place $v$, the local theory yields local constants $c_{\sigma_v,\psi_v}$ and $c_{\sigma_v,\psi_{\delta_{i,v}}}$ for each $i\neq 1$ such that for all $\varphi_{1,v}, \varphi_{2,v}\in \sigma_v$, there are
\begin{align}
(\varphi_{1,v},\varphi_{2,v})_v=&\frac{|2|_{F_v}\cdot c_{\sigma_v,\psi_v}}{2}\int_{F_v^\times}W_{\varphi_{1,v},\psi_v}\overline{W_{\varphi_{2,v},\psi_v}}(\underline{a_v})d^\times a_v \label{formula21}\\
\nonumber &+\sum_{i\neq 1}\frac{|2|_{F_v}\cdot c_{\sigma_v,\psi_{v,\delta_i}}}{2}\int_{k^\times} W_{\varphi_1,\psi_{\delta_{v,i}}}\overline{W_{\varphi_2,\psi_{\delta_{v,i}}}}(\underline{a}_v)d^\times a_v,\\
\ell_{\psi_v}(\varphi_{1,v})\overline{\ell_{\psi_v}(\varphi_{2,v})}=&\frac{1}{c_{\sigma_v,\psi_v}}\int_k \big(\sigma_v(n_v)\varphi_{1,v},\varphi_{2,v}\big)\psi(-n_v)dn_v. \label{formula22}
\end{align}
With (\ref{sl2mc}) and the second formula above, one can easily see that $c_{\sigma_v,\psi_v}=\frac{L(\frac{1}{2},\pi_v)\zeta_{F_v}(2)}{L(1,\pi_v,\ad)}$ at almost all places. So we introduce the normalized local pairing below,
\[
\ml_{\psi_v}^\sharp(\varphi_{1,v},\varphi_{2,v})=\frac{L(1,\pi_v,\ad)}{L(\frac{1}{2},\pi_v)\zeta_{F_v}(2)}\int_k \big(\sigma_v(n_v)\varphi_{1,v},\varphi_{2,v}\big)\psi(-n_v)dn_v.
\]
\begin{proposition}\label{sl2wpfa}
Put $c_\sigma=\frac{L(1,\pi,\ad)}{L(\frac{1}{2},\pi)\zeta_F(2)}\prod_v \frac{L(\frac{1}{2},\pi_v)\zeta_{F_v}(2)}{c_{\sigma_v,\psi_v}L(1,\pi_v,\ad)}$. For decomposable $\varphi_1,\varphi_2\in \sigma$ there is
\[
\ell_\psi(\varphi_1)\overline{\ell_\psi(\varphi_2)}=c_\sigma \cdot \frac{L(\frac{1}{2},\pi)\zeta_F(2)}{L(1,\pi,\ad)}\prod_v\ml_{\psi_v}^\sharp(\varphi_{1,v},\varphi_{2,v}).
\]
\end{proposition}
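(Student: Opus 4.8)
The plan is to treat this Proposition as a purely formal consequence of the local period identity of the previous subsection together with the fact that every piece of global data has been normalized to factor through the places; essentially nothing new needs to be proved, only assembled. First I would rewrite each normalized local pairing in terms of the square of the local Whittaker functional: by the local identity (the $\delta_i=1$ instance of Remark \ref{sl2t_p2wt-rm}, equivalently the second displayed local formula of this subsection), one has $\int_{F_v}\big(\sigma_v(n_v)\varphi_{1,v},\varphi_{2,v}\big)\psi_v(-n_v)\,dn_v=c_{\sigma_v,\psi_v}\,\ell_{\psi_v}(\varphi_{1,v})\overline{\ell_{\psi_v}(\varphi_{2,v})}$, so that
\[
\ml_{\psi_v}^\sharp(\varphi_{1,v},\varphi_{2,v})=\frac{L(1,\pi_v,\ad)\,c_{\sigma_v,\psi_v}}{L(\tfrac12,\pi_v)\zeta_{F_v}(2)}\,\ell_{\psi_v}(\varphi_{1,v})\overline{\ell_{\psi_v}(\varphi_{2,v})}.
\]
Since $\ell_\psi\neq 0$ and $\ell_\psi=\prod_v\ell_{\psi_v}$, each $\ell_{\psi_v}$ is non-zero, hence each $c_{\sigma_v,\psi_v}$ is non-zero, so no expression above is degenerate.

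Next I would check that this rewriting is trivial at almost every place. At a place $v$ where $\pi_v$, $\sigma_v$ and $\psi_v$ are unramified and $\varphi_{i,v}=\varphi_{v,0}$, the explicit spherical matrix-coefficient formula recorded in this section gives $c_{\sigma_v,\psi_v}=\frac{L(\frac12,\pi_v)\zeta_{F_v}(2)}{L(1,\pi_v,\ad)}$, while the explicit value of the spherical Whittaker function gives $\ell_{\psi_v}(\varphi_{v,0})=1$; hence the coefficient in the display above equals $1$ and $\ml_{\psi_v}^\sharp(\varphi_{v,0},\varphi_{v,0})=1$ there. Consequently the three products $\prod_v\ml_{\psi_v}^\sharp(\varphi_{1,v},\varphi_{2,v})$, $\prod_v\ell_{\psi_v}(\varphi_{i,v})$, and $\prod_v\frac{L(1,\pi_v,\ad)\,c_{\sigma_v,\psi_v}}{L(\frac12,\pi_v)\zeta_{F_v}(2)}$ are all finite products, so there is no convergence issue.

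Finally I would multiply the first display over all $v$ and use $\ell_\psi(\varphi_i)=\prod_v\ell_{\psi_v}(\varphi_{i,v})$ on decomposable vectors to obtain
\[
\prod_v\ml_{\psi_v}^\sharp(\varphi_{1,v},\varphi_{2,v})=\Big(\prod_v\frac{L(1,\pi_v,\ad)\,c_{\sigma_v,\psi_v}}{L(\tfrac12,\pi_v)\zeta_{F_v}(2)}\Big)\,\ell_\psi(\varphi_1)\overline{\ell_\psi(\varphi_2)};
\]
by the definition $c_\sigma=\frac{L(1,\pi,\ad)}{L(\frac12,\pi)\zeta_F(2)}\prod_v\frac{L(\frac12,\pi_v)\zeta_{F_v}(2)}{c_{\sigma_v,\psi_v}L(1,\pi_v,\ad)}$ the finite product in parentheses is exactly $\big(c_\sigma\cdot\frac{L(\frac12,\pi)\zeta_F(2)}{L(1,\pi,\ad)}\big)^{-1}$, and rearranging the last display gives the asserted formula. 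The only step carrying any content is the almost-everywhere normalization $\ml_{\psi_v}^\sharp(\varphi_{v,0},\varphi_{v,0})=1$, which rests on the explicit unramified computations for the spherical matrix coefficient and the spherical Whittaker function of $\wtilde{\rho}(\mu)$ already displayed above; granting those, the Euler products converge and the proof is pure bookkeeping, so I do not anticipate a real obstacle here.
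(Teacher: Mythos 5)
Your proof is correct and is essentially the paper's own argument: the paper simply says the proposition ``directly follows from the local equations,'' and your write-up just makes explicit the same bookkeeping — rewrite each $\ml_{\psi_v}^\sharp$ via the local identity $\ell_{\psi_v}(\varphi_{1,v})\overline{\ell_{\psi_v}(\varphi_{2,v})}=c_{\sigma_v,\psi_v}^{-1}\int_{F_v}(\sigma_v(n_v)\varphi_{1,v},\varphi_{2,v})\psi_v(-n_v)\,dn_v$, note the factors are $1$ at almost all places by the unramified normalizations, and multiply over $v$.
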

\begin{proof}
The formula is simply the product of (\ref{formula22}) at all local places.
\end{proof}

\section{The Transfer of Whittaker functionals}\label{sect_tran}

In this section, we show that the constant $c_\sigma$ in Proposition ~\ref{sl2wpfa} takes value $\frac{1}{2}$. The idea is to write $\pi=\Theta(\sigma,\psi)$ and use theta lifting to express the Whittaker functional on $\pi$ in terms of the Whittaker functional on $\sigma$. This leads to the relation $c_\sigma=c_\pi$ and then Proposition ~\ref{compare_gl2ip} is applied.

Recall the Weil representation of $\wtilde{\SL}_2(\ba)\times \PGL_2(\ba)$ on $\ms(V_\ba)$ with respect to a fixed additive character $\psi$ of $\ba/F$, where $V$ is as in section ~\ref{weilrepso3} and the formulas for Weil representation are as in section ~\ref{localweilrep}. Choose a basis $\{e_+, e_0, e_-\}$ of $V(F)$, with
\[
e_+=\smalltwomatrix{0}{1}{0}{0},\quad e_0=\smalltwomatrix{1}{}{}{-1},\quad e_-=\smalltwomatrix{0}{0}{1}{0}.
\]
Define the partial Fourier transform from $\ms(V_\ba)$ to $\ms(\ba \oplus \ba^2)$ by
\[
\widehat{\phi}(x_0;x_-,y_-)=\int_{\ba}\phi(x_+ e_+ +x_0e_0 + x_-e_-)\psi\big(x_+ y_-\big)dx_+.
\]

We first transfer the global Whittaker period functional on $\pi$. It was used by Waldspurger to show ``$\ell_\psi$ is nonzero on $\sigma$ $\Rar\Theta_\psi(\sigma,\psi)\neq 0$".
\begin{lemma}\label{transfer_global}
Suppose $f=\Theta_\psi(\phi,\varphi)\in \pi$ with $\phi\in \mathrm{S}(V_\ba)$ and $\varphi\in \pi$, then
\begin{equation}\label{gptransfersl2}
\ell_{\pi,\psi_{-2}}(f)=\int_{N(\ba)\backslash \SL_2(\ba)}\overline{W_{\varphi,\psi}(g)}\omega(g)\widehat{\phi}(1;0,1)dg.
\end{equation}
\end{lemma}
\begin{proof}
One first observes that
\begin{align*}
\Theta_\phi(g)=&\sum_{\xi\in F}\sum_{\eta\in F^2}\omega(g)\widehat{\phi}(\xi;\eta)=\sum_{\xi\in F}\widehat{\phi}(\xi;0)+\sum_{\gamma\in N(F)\backslash \SL_2(F)}\sum_{\xi\in F}\omega(\gamma g)\widehat{\phi}(\xi;0,1).
\end{align*}
Because the first summand is an elementary theta series and $\sigma\subset \ma_{00}(\wtilde{\SL}_2)$, there is
\begin{align*}
\Theta_\psi(\phi,\varphi)(h)=&\int_{\SL_2(\bq)\backslash \SL_2(\ba)}\overline{\varphi(g)}\sum_{\gamma\in N(F)\backslash \SL_2(F)}\sum_{\xi\in F}\omega(\gamma g, h)\widehat{\phi}(\xi;0,1)dg\\
=&\int_{N(\ba)\backslash \SL_2(\ba)}\overline{W_{\varphi,\psi_{q(\xi)}}(g)}\sum_{\xi\in F}\omega(g,h)\widehat{\phi}(\xi;0,1).
\end{align*}
For $\alpha\in F^\times$ and $f=\Theta_\psi(\phi,\varphi)$, it follows that
\begin{align*}
\ell_{\pi,\psi_\alpha}(f)=&\int_{\ba/F}f(n)\psi(-\alpha n)dn\\
=&\int_{\ba/F}\int_{N(\ba)\backslash \SL_2(\ba)}\overline{W_{\varphi,\psi_{q(\xi)}}(g)}\big[\sum_{\xi\in F}\omega(g,n)\widehat{\phi}(\xi;0,1)\big]\psi(-\alpha n)dgdn\\
=&\int_{N(\ba)\backslash \SL_2(\ba)}\overline{W_{\varphi,\psi_{q(\xi)}}(g)}\big[\sum_{\xi\in F}\omega(g)\widehat{\phi}(\xi;0,1)\int_{\ba/F}\psi\big(-q(ne_0,\xi e_0)-\alpha n\big)dn\big]dg\\
=&\int_{N(\ba)\backslash \SL_2(\ba)}\overline{W_{\varphi,\psi_{\frac{\alpha^2}{4}}}(g)}\omega(g)\widehat{\phi}(-\frac{\alpha}{2};0,1)dg.
\end{align*}.
\end{proof}

Second, we transfer the local Whittaker period functional on $\pi_v$ to an integral over $\wtilde{\SL}_2(F_v)$. It involves subtler convergence issue and more care is needed. We first recall the inner product formula concerning the lifting $\sigma\rar \pi$ and the according normalized local theta correspondences.
\begin{proposition}\cite[Prop. 2.8 (ii)]{qiu14}\label{prop_ip}
Suppose $\sigma\subset \ma_{00}(\wtilde{\SL}_2)$ and $\pi=\Theta(\sigma,\psi)$ is non-zero. For decomposable vectors $\varphi_i\in \sigma$ and $\phi_i\in \mathrm{S}(V_\ba)$ ($i=1,2$), there is
\[
\big(\Theta(\phi_1,\varphi_1),\Theta(\phi_2, \varphi_2)\big)_\pi=\frac{L(\frac{1}{2},\pi)}{\zeta_F(2)}\prod_v \frac{\zeta_{F_v}(2)}{L(\frac{1}{2},\pi_v)}\frac{}{}\int_{\SL_2(F_v)}\overline{(\sigma_v(g_v)\varphi_{1,v},\varphi_{2,v})}(\omega_v(g_v)\phi_{1,v},\phi_{2,v})dg_v.
\]
\end{proposition}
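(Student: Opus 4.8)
The plan is to deduce the identity from the Rallis inner product formula for the dual pair $(\wtilde{SL}_2,SO(V))$, where $\dim V=3$ and $SO(V)\cong PGL_2$. First I would expand the Petersson pairing on $\pi$ over $[SO(V)]$ and substitute $\Theta(\phi_i,\varphi_i)(h)=\int_{[SL_2]}\overline{\varphi_i(g)}\,\Theta_{\phi_i,\psi}(g,h)\,dg$ from Section~\ref{weilrep}; this gives, at least formally,
\[
\big(\Theta(\phi_1,\varphi_1),\Theta(\phi_2,\varphi_2)\big)_\pi
=\int_{[SL_2]}\!\int_{[SL_2]}\overline{\varphi_1(g_1)}\,\varphi_2(g_2)\Big(\int_{[SO(V)]}\Theta_{\phi_1,\psi}(g_1,h)\,\overline{\Theta_{\phi_2,\psi}(g_2,h)}\,dh\Big)\,dg_1\,dg_2,
\]
in which the inner integral over $[SO(V)]$ is to be interpreted via the Kudla--Rallis regularization, since the product of two moderate-growth theta kernels need not be integrable. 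Using $\overline{\Theta_{\phi_2,\psi}(g_2,h)}=\Theta_{\bar\phi_2,\psi}(g_2,h)$ relative to the quadratic space $(V,-q)$, the inner integrand is the restriction to $\wtilde{SL}_2\times\wtilde{SL}_2\hookrightarrow\wtilde{Sp}_4$ of a single theta kernel $\Theta_\Phi$ for the doubled pair $(\wtilde{Sp}_4,SO(V))$, with Schwartz datum $\Phi$ built from $\phi_1\otimes\bar\phi_2$.

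Next I would apply the \emph{regularized Siegel--Weil formula} for the pair $(SO(V),Sp_4)$: the regularized inner theta integral equals, up to an explicit constant, the value at the Siegel--Weil point $s=s_0$ of the Siegel Eisenstein series $E((g_1,g_2);s,\Phi)$ on $\wtilde{Sp}_4(\ba)$ attached to the section determined by $\Phi$. Substituting back, the pairing becomes the doubling integral $\int_{[SL_2]^2}\overline{\varphi_1(g_1)}\,\varphi_2(g_2)\,E((g_1,g_2);s_0,\Phi)\,dg_1\,dg_2$. Unfolding this Eisenstein series by the Piatetski-Shapiro--Rallis doubling method and using cuspidality of $\sigma$ collapses the quotient integral to an integral over $SL_2(\ba)$ that factors as an Euler product $\prod_v Z_v(s_0;\varphi_{1,v},\varphi_{2,v},\Phi_v)$, where each local doubling zeta integral has the shape $\int_{SL_2(F_v)}\overline{(\sigma_v(g_v)\varphi_{1,v},\varphi_{2,v})_{\sigma_v}}\,f_{\Phi_v}\big((g_v,1);s_0\big)\,dg_v$ for the standard Siegel section $f_{\Phi_v}(\cdot\,;s)$.

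The crucial local input is then the identification, at $s=s_0$, of the section value $f_{\Phi_v}((g_v,1);s_0)$ with the Weil-representation matrix coefficient $(\omega_v(g_v)\phi_{1,v},\phi_{2,v})$; this follows from the realization of the degenerate principal series of $\wtilde{Sp}_4(F_v)$ as a quotient of $\ms$ of the relevant space, together with the explicit formulas for $\omega_{\psi_v}$ in Section~\ref{weilrep}. Hence $Z_v(s_0;\cdot)$ equals $\int_{SL_2(F_v)}\overline{(\sigma_v(g_v)\varphi_{1,v},\varphi_{2,v})}\,(\omega_v(g_v)\phi_{1,v},\phi_{2,v})\,dg_v$. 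Finally I would carry out the bookkeeping of constants: the unramified computation gives $Z_v(s_0;\varphi_{v,0},\varphi_{v,0},\phi_{v,0})=\tfrac{L(1/2,\pi_v)}{\zeta_{F_v}(2)}$, using Waldspurger's identification of the relevant standard $L$-function of $\sigma$ with $L(s,\pi)$, while the residue/value arising in the Siegel--Weil step supplies the global factor $\tfrac{L(1/2,\pi)}{\zeta_F(2)}$; dividing out the unramified local values produces the correcting product $\prod_v\tfrac{\zeta_{F_v}(2)}{L(1/2,\pi_v)}$, yielding the displayed formula once one checks that the metaplectic cocycle, the Weil indices $\gamma(\psi_v,V_{F_v})$, the $\chi_{\psi_v}$-factors, the powers of $|2|$, and the Haar-measure normalizations all cancel.

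The step I expect to be the main obstacle is the Siegel--Weil identity. Since $\dim V=3<\dim W=4$, the pair $(SO(V),Sp_4)$ lies in the second-term range: the theta integral over $[SO(V)]$ diverges, and the regularized Siegel--Weil formula of Kudla--Rallis (and Gan--Qiu--Takeda) carries auxiliary boundary terms detecting theta lifts from smaller orthogonal groups. The delicate part is to show that these boundary terms contribute nothing after integration against $\varphi_1,\varphi_2$; this is precisely where the hypothesis that $\sigma$ is orthogonal to elementary theta series (Section~\ref{sl2t_global}) is needed, since it forces $\pi=\Theta(\sigma,\psi)$ to be cuspidal and makes the offending periods vanish. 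A secondary difficulty is the exact matching of the normalizing constants and the metaplectic subtleties in the doubling unfolding on $\wtilde{Sp}_4$.
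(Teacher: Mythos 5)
This proposition is not proved in the paper at all: it is quoted from \cite{qiu4}, so there is no internal argument to measure you against. Your outline is the standard Rallis inner product route, which is exactly the machinery behind the cited result: unfold the Petersson pairing on $[PGL_2]$, interpret the inner $[SO(V)]$-integral of the product of theta kernels as a (divergent) theta integral for the doubled pair $(\wtilde{Sp}_4,O(V))$, invoke the regularized Siegel--Weil identity at the central point $s_0=0$, unfold the doubling integral against the cusp forms, and identify the local Siegel section at $s_0$ with the Weil matrix coefficient $(\omega_v(g_v)\phi_{1,v},\phi_{2,v})$. You also correctly locate where the hypothesis $\sigma\subset\ma_{00}(\wtilde{SL}_2)$ enters: vanishing of the lift to the trivial group in the Witt tower makes $\pi$ cuspidal and kills the complementary terms of the regularized identity, which is indeed the delicate point in this boundary case $\dim V=3$, $\wtilde{Sp}_4$.

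Two points should be sharpened. First, the global factor $\tfrac{L(\frac12,\pi)}{\zeta_F(2)}$ is not ``supplied by the Siegel--Weil residue/value''; it is nothing but the product over the unramified places of the local values $\tfrac{L(\frac12,\pi_v)}{\zeta_{F_v}(2)}$ of the matrix-coefficient integrals, i.e.\ the displayed identity is a regrouping of $\prod_v Z_v(s_0)$. What the Siegel--Weil step (together with the choice of Tamagawa and self-dual measures) must deliver is that the leftover global constant is exactly $1$; this is a genuine verification, not a cancellation one may assume, and it is where the first-term-identity constant, the $O(V)$ versus $SO(V)$ factor of $2$, and the measure decomposition interact. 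Second, evaluating the doubling zeta integral at the center requires absolute convergence of $\int_{SL_2(F_v)}\overline{(\sigma_v(g_v)\varphi_{1,v},\varphi_{2,v})}(\omega_v(g_v)\phi_{1,v},\phi_{2,v})\,dg_v$; this holds because the Weil matrix coefficient decays like $|a_v|^{3/2}$ along the Cartan decomposition while unitarity (and the bounds available for local components of Waldspurger packets of cuspidal $\pi$) controls $(\sigma_v(g_v)\varphi_{1,v},\varphi_{2,v})$, but it should be stated rather than passed over, since the whole point of the present paper's later arguments is that such local integrals are borderline. With these caveats, your sketch is an accurate account of the expected proof of the cited proposition.
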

\begin{remark}
The local $\SL_2(F_v)$-integral on the righ hand side of the formula is convergent and equal to $\frac{L(\frac{1}{2},\pi_v)}{\zeta_{F_v}(2)}$ at almost all places, whence almost all factors in the infinite product is $1$.
\end{remark}

By local Howe duality, at each place $v$ of $F$, the $\wtilde{\SL}_2(F_v)\times \PGL_2(F_v)$-invariant homomorphisms $\theta_v:\mathrm{S}(V_{F_v})\rar \sigma_v\otimes \pi_v$ form a vector space of dimension $1$.  We choose $\theta_v$ so that with respect to the associated local theta lifting $\theta_v(\phi_v,\varphi_v)=(\theta_v(\phi_v),\varphi_v)_{\sigma_v}$, there is
\begin{equation*}
\big(\theta_v(\phi_{1,v},\varphi_{1,v}), \theta_v(\phi_{2,v},\varphi_{2,v})\big)_{\pi_v}=\frac{\zeta_{F_v}(2)}{L(\frac{1}{2},\pi_v)}\int_{\SL_2(F_v)}(\omega_v(g_v)\phi_{1,v},\phi_{2,v})\overline{(\sigma_v(g_v)\varphi_{1,v},\varphi_{2,v})_{\sigma_v}}dg_v.
\end{equation*}
Such a normalization by inner product is unique up to a constant of norm $1$. Recall that the global representations $\sigma$ and $\pi$ are restricted tensor products $\sigma=\otimes_v \sigma_v$ and $\pi=\otimes_v \pi_v$ with respect to a choice of spherical vectors $\varphi_{v,0}\in \sigma_v$ and $f_{v,0}\in \pi_v$ for almost all $v$. We require $\theta_v(1_{V(\mo_{F_v})},\varphi_{v,0})=f_{v,0}$ at almost all places, then there is a global constant $c_{\sigma,\pi}$ satisfying
\begin{equation}\label{decomposetheta}
\Theta(\phi,\varphi)=c_{\sigma,\pi}\prod_v \theta_v(\phi_v,\varphi_v).
\end{equation}
The global inner product formula means that $|c_{\sigma,\pi}|^2=\frac{L(\frac{1}{2},\pi)}{\zeta_F(2)}$.

Furthermore, at each place $v$, there is a homeomorphism
\[
N(F_v)\backslash \SL_2(F_v)\rar F_v^2\backslash\{(0,0)\},\quad N(F_v)g_v\rar (0,1)g_v.
\]
So the additive Haar measure on $F_v^2$ at $(0,1)g_v$ can be written as $c_v^\prime d\dot{g}_v$, where $c_v^\prime$ is a local constant depending on the quotient measure $d\dot{g}_v$ on $N(F_v)\backslash \SL_2(F_v)$. Because $\ba^2$, $N(\ba)$, and $\SL_2(\ba)$ are all given the Tamagawa measures and the maximal compact subgroup $K_v$ of $\SL_2(F_v)$ has measure $1$ for almost all $v$, thus $c_v^\prime=\frac{1}{\zeta_{F_v}(2)}$ for almost all $v$ and $\prod_v c_v^\prime=1$.

Lastly,  we recall the normalized local Whittaker period functional on $\pi_v$ in (\ref{nfunctional12}) and also formula (\ref{formula21}), which expresses the inner product on $\sigma_v$ in terms of Whittaker functions.
\begin{lemma}\label{transfer_local}
Write $f_{i,v}=\theta_v(\phi_{i,v},\varphi_{i,v})\in \pi_v$, $i=1,2$, then
\[
\ml_{\pi_v,\psi_{-2,v}}^\sharp(f_{1,v},f_{2,v})=\frac{c_v^\prime c_{\sigma_v,\psi_v}L(1,\pi_v,\ad)}{|2|_vL(\frac{1}{2},\pi_v)}J_v(\varphi_{1,v},\phi_{1,v})\overline{J_v(\varphi_{2,v},\phi_{2,v})},
\]
where $J_v(\varphi_v,\phi_v):=\int_{N(F_v)\backslash \SL_2(F_v)}\overline{W_{\varphi_v,\psi_v}(g_v)}\omega_v(g_v)\widehat{\phi}_v(1;0,1)dg_v$ equals $1$ at almost all places.
\end{lemma}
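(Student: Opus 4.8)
The plan is to run the local analogue of the unfolding in Lemma~\ref{transfer_global}, replacing the two places where the global argument used $\int_{\ba/F}$ (to kill the elementary theta term and to single out one $\xi_0\in Fe_0$) by the normalized inner product formula and by the distribution/quadratic-Fourier machinery of Section~\ref{qftsec}. Throughout, $n$ denotes the variable of $N(F_v)\cong F_v$. First one reduces to a matrix-coefficient integral: by the definition of $\ml^\sharp$ and $\psi_{-2,v}(-n)=\psi_v(2n)$,
\[
\ml_{\pi_v,\psi_{-2,v}}^\sharp(f_{1,v},f_{2,v})=\frac{L(1,\pi_v,\ad)}{\zeta_{F_v}(2)}\int_{F_v}\big(\pi_v(n)f_{1,v},f_{2,v}\big)_{\pi_v}\psi_v(2n)\,dn,
\]
the integral understood distributionally as in the previous section. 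Since $\theta_v$ is $\wtilde{SL}_2(F_v)\times PGL_2(F_v)$-equivariant and $PGL_2=SO(V)$, $\pi_v(n)f_{1,v}=\theta_v(n\cdot\phi_{1,v},\varphi_{1,v})$, where $n$ acts through $SO(V)(F_v)$, i.e.\ $(n\cdot\phi_{1,v})(X)=\phi_{1,v}(n^{-1}Xn)$. Feeding this and the normalized local inner product formula for $\theta_v$ (proportionality constant $\zeta_{F_v}(2)/L(\tfrac12,\pi_v)$) into the display yields, formally,
\[
\ml_{\pi_v,\psi_{-2,v}}^\sharp(f_{1,v},f_{2,v})=\frac{L(1,\pi_v,\ad)}{L(\tfrac12,\pi_v)}\int_{SL_2(F_v)}\overline{(\sigma_v(g)\varphi_{1,v},\varphi_{2,v})_{\sigma_v}}\;\Xi(g)\,dg,\qquad \Xi(g):=\int_{F_v}\big(\omega_v(g)(n\cdot\phi_{1,v}),\phi_{2,v}\big)\psi_v(2n)\,dn,
\]
where $\omega_v(g)$ is the $\wtilde{SL}_2(F_v)$-action; the inner $n$-integral and the interchange with $\int_{SL_2(F_v)}$ are to be justified distributionally.

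The key is then a Weil-representation computation identifying $\Xi(n'\dot{g})$, as a function of the left unipotent variable $n'\in N(F_v)$ (with $g=n'\dot{g}$, $\dot{g}\in N(F_v)\backslash SL_2(F_v)$), with a quadratic Fourier transform. Using $\omega_v(n')\phi(X)=\psi_v(n'q(X))\phi(X)$ one sees that $\Xi(n'\dot{g})=\int_{V_{F_v}}\psi_v(n'q(X))\,G_{\dot{g}}(X)\,dX$, where $G_{\dot{g}}(X)=\overline{\phi_{2,v}(X)}\int_{F_v}\big(\omega_v(\dot{g})(n\cdot\phi_{1,v})\big)(X)\psi_v(2n)\,dn$. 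Passing to the partial-Fourier model $\widehat{\phi}$ on $\ms(F_ve_0\oplus F_v^2)$ and performing the inner $n$-integral by completing the square in the $e_0$-coordinate exactly as in the proof of Lemma~\ref{transfer_global} — there the outer character $\psi_v(2n)$, the $SO(V)$-unipotent and the ensuing linear $n$-integral localize onto the slice $(e_0;0,1)$, which is the source of the factor $|2|_v^{-1}$, and on which $q(X)=x_0^2$ so that the relevant $q$-pushforward of $G_{\dot{g}}$ is supported on $(F_v^\times)^2\cup\{0\}$, with $q(e_0)=1$ fixing the square class — one finds $\Xi(n'\dot{g})=\mf_2\Phi_{\dot{g}}(n')$ for a Bruhat--Schwartz function $\Phi_{\dot{g}}\in\ms(F_v)$ assembled from $\omega_v(\dot{g})\widehat{\phi}_{1,v}$ and $\widehat{\phi}_{2,v}$ on that slice.

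Now one applies the isometry and reassembles. For fixed $\dot{g}$, $(\sigma_v(n'\dot{g})\varphi_{1,v},\varphi_{2,v})=(\sigma_v(n')\varphi_{1,v}',\varphi_{2,v})$ with $\varphi_{1,v}'=\sigma_v(\dot{g})\varphi_{1,v}$, so $\int_{N(F_v)}\overline{(\sigma_v(n')\varphi_{1,v}',\varphi_{2,v})}\,\mf_2\Phi_{\dot{g}}(n')\,dn'$ is precisely the integral of Lemma~\ref{identity-mbintegral} with $\delta_i=1$ (the square class being that of $1$; the $\delta_i\neq 1$ pieces die by Proposition~\ref{isometry-identity}(2)), which that lemma converts into $c_{\sigma_v,\psi_v}\int_{F_v^\times}\overline{W_{\varphi_{1,v}',\psi_v}}W_{\varphi_{2,v},\psi_v}(\underline{a})\,\Phi_{\dot{g}}(a)\,|a|^{-1}d^\times a$. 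Using $W_{\varphi_{1,v}',\psi_v}(\underline{a})=W_{\varphi_{1,v},\psi_v}(\underline{a}\,\dot{g})$, unwinding $\Phi_{\dot{g}}$, and merging the $d^\times a$- and $\dot{g}$-integrations into a single integral over $N(F_v)\backslash SL_2(F_v)\simeq F_v^2\setminus\{0\}$ (whence the factor $c_v'$ comparing the two measures), the result factors as
\[
\Big(\int_{N(F_v)\backslash SL_2(F_v)}\overline{W_{\varphi_{1,v},\psi_v}(g)}\,\omega_v(g)\widehat{\phi}_{1,v}(e_0;0,1)\,dg\Big)\overline{\Big(\int_{N(F_v)\backslash SL_2(F_v)}\overline{W_{\varphi_{2,v},\psi_v}(g)}\,\omega_v(g)\widehat{\phi}_{2,v}(e_0;0,1)\,dg\Big)}=J_v(\varphi_{1,v},\phi_{1,v})\,\overline{J_v(\varphi_{2,v},\phi_{2,v})}.
\]
Multiplying out the constants $\frac{L(1,\pi_v,\ad)}{\zeta_{F_v}(2)}$, $\frac{\zeta_{F_v}(2)}{L(\tfrac12,\pi_v)}$, $c_{\sigma_v,\psi_v}$, $|2|_v^{-1}$ and $c_v'$ collected along the way gives exactly $\frac{c_v'\,c_{\sigma_v,\psi_v}\,L(1,\pi_v,\ad)}{|2|_v\,L(\tfrac12,\pi_v)}$. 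That $J_v=1$ at almost all $v$ is a direct unramified computation with the spherical Whittaker function $W_{\varphi_{v,0},\psi_v}$ and $\omega_v(\cdot)1_{V(\mo_{F_v})}$.

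The main obstacle is the Weil-representation step of the second paragraph: carrying out the square-completion in the partial-Fourier model so that $\Xi(n'\dot{g})$ is literally $\mf_2\Phi_{\dot{g}}(n')$ with the correct square class and constant, and — hand in hand with it — verifying the hypotheses of Lemma~\ref{identity-mbintegral}, namely the integrability of $(\sigma_v(n')\varphi_{1,v}',\varphi_{2,v})|n'|^{-1/2}$ and of $W_{\varphi_{1,v}',\psi_v}\overline{W_{\varphi_{2,v},\psi_v}}(\underline{a})|a|^{-1}$ together with its square times $|a|^{-1}$. These integrability facts follow from the asymptotics of matrix coefficients and of Whittaker functions of irreducible unitary representations of $\wtilde{SL}_2(F_v)$ recalled in the remark after Lemma~\ref{identity-mbintegral}; granting them, everything else is the formal unfolding of Lemma~\ref{transfer_global} together with measure and constant bookkeeping.
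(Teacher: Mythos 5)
Your overall skeleton --- reducing $\ml^\sharp_{\pi_v,\psi_{-2,v}}$ to the distributional Fourier transform of the matrix coefficient, inserting the normalized inner product formula with the constant $\zeta_{F_v}(2)/L(\tfrac12,\pi_v)$, and finishing with Lemma~\ref{identity-mbintegral} plus measure bookkeeping --- is the paper's, but the central claim of your second paragraph is false, so the argument does not go through as written. You assert that for fixed $\dot{g}$ the function $n'\mapsto\Xi(n'\dot{g})$ equals $\mf_2\Phi_{\dot{g}}(n')$ with $\Phi_{\dot{g}}\in\ms(F_v)$, on the grounds that the $q$-pushforward of $G_{\dot{g}}$ is supported on $({F_v^\times})^2\cup\{0\}$. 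Two things go wrong. First, $G_{\dot{g}}$ is not a locally integrable function: the inner integral $\int_{F_v}[\omega_v(\dot{g})\phi_{1,v}](n^{-1}Xn)\psi_v(2n)\,dn$ diverges along the isotropic line $F_ve_+$, which is fixed by the unipotent; this is the local shadow of the degenerate (elementary theta) term that in Lemma~\ref{transfer_global} is removed by the compactness of $\ba/F$ and by $\sigma\perp$ elementary theta series --- neither mechanism exists pointwise over $F_v$, which is exactly why the paper never forms such an integral but instead tests $(\pi_v(n)f_{1,v},f_{2,v})$ against $\widehat{\lambda}$ with $\lambda\in C_c^\infty(F_v^\times)$, computes the representing function on $F_v^\times$, and only at the end evaluates at $-2$. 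Second, and more decisively, even away from that line the $n'$-spectrum of $\Xi(\cdot\,\dot{g})$ is the pushforward of $G_{\dot{g}}$ along $q$, and $q(X)=x_0^2+x_+x_-$ is the split ternary form: $\int_{F_v}\Xi(n'\dot{g})\psi_v(-un')\,dn'$ is an orbital-type integral of $\overline{\phi_{2,v}}\cdot H$ over the level set $\{q=u\}$, generically nonzero for every $u\in F_v^\times$, square or not. So $\Xi(\cdot\,\dot{g})$ is not a quadratic Fourier transform of any $L^1$ function, and your appeal to Proposition~\ref{isometry-identity}(2) to kill the $\delta_i\neq1$ pieces for fixed $\dot{g}$ is unjustified: the $\psi_\delta$-Whittaker components of $(\sigma_v(n')\varphi'_{1,v},\varphi_{2,v})$ with $\delta\neq1$ genuinely pair with the non-square frequencies of $\Xi(\cdot\,\dot{g})$, and any cancellation could only occur after the $\dot{g}$-integration, not where you invoke it.

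The quadratic structure in fact emerges only from a joint manipulation of the two $SL_2$-variables, which is the actual content of the paper's proof: after the $\lambda$-test one uses the mixed-model Plancherel identity $(\phi_{1,v},\phi_{2,v})=c_v'\int_{N(F_v)\backslash SL_2(F_v)}\int_{F_v}\omega_v(g')\widehat{\phi}_{1,v}(a;0,1)\overline{\omega_v(g')\widehat{\phi}_{2,v}(a;0,1)}\,da\,d\dot{g}'$ together with the fact that the $SO(V)$-unipotent acts on the slice by $\psi_v(-2na)$, getting a double integral over $SL_2(F_v)\times N(F_v)\backslash SL_2(F_v)$ with slice point $(e_0;0,1)$ (this is where $|2|_v^{-1}$ and $c_v'$ enter); one then moves the $K$-component of the inner variable onto $\varphi_{2,v},\phi_{2,v}$ by a change of variable in the outer integral, Iwasawa-decomposes both variables, and commutes the inner torus element $\underline{b}$ past the outer unipotent $n$, which produces the factor $\psi_v(nb^2)$ since $q(be_0)=b^2$. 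Thus the quadratic Fourier transform is taken in the inner torus variable $b$, of the genuinely Schwartz function built from $\widehat{\phi}_{1,v}(b;0,b^{-1})\overline{\widehat{\phi}_{2,v}(b;0,b^{-1})}$, and is evaluated at the outer unipotent $n$; only with this grouping is the frequency manifestly in the square class of $1$ and Lemma~\ref{identity-mbintegral} applicable with $\delta=1$. So the gap is not merely the analytic justification you flag at the end: the claimed identity $\Xi(n'\dot{g})=\mf_2\Phi_{\dot{g}}(n')$ must be abandoned and replaced by this rearrangement.
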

\begin{proof}
\emph{Step 1.} With the identification $N(F_v)\backslash \SL_2(F_v)\rar F_v^2\backslash\{(0,0)\}$, we first observe that
\begin{align*}
(\phi_{1,v},\phi_{2,v})=&\iint_{F_v\times F_v^2}\widehat{\phi}_{1,v}(a_v;y_v)\overline{\widehat{\phi}_{2,v}(a_v;y_v)}da_vdy_v\\
=&c_v^\prime \iint_{F_v\times [N(F_v)\backslash \SL_2(F_v)]}\omega(g_v^\prime)\widehat{\phi}_{1,v}(a_v;0,1)\overline{\omega_v(g_v^\prime)\widehat{\phi}_{2,v}(a_v ;0,1)}da_vd\dot{g}_v^\prime
\end{align*}
Since $\omega_v(n_v)\widehat{\phi}_v(a_v;0,1)=\widehat{\phi}_v(a_v;0,1)\psi(-2n_va_v)$, the following holds for $\lambda\in \ms(F_v)$:
\begin{align}
\nonumber &\int_{F_v}(\omega_v(n_v)\phi_{1,v},\phi_{2,v})\widehat{\lambda}(-n_v)dn_v\\
=&c_v^\prime \iint_{F_v\times [N(F_v)\backslash \SL_2(F_v)]}\omega(g_v^\prime)\widehat{\phi}_{1,v}(a_v;0,1)\overline{\omega_v(g_v^\prime)\widehat{\phi}_{2,v}(a_v;0,1)}\lambda(-2a_v)da_vd\dot{g}_v^\prime. \label{proofequality1}
\end{align}

\emph{Step 2.} We compute $W_{f_{1,v},f_{2,v},\psi_v}(a_v)$ to get $\ml_{\pi_v,\psi_{-2,v}}^\sharp(f_{1,v},f_{2,v})$. For $\lambda\in C^\infty_c(F_v^\times)$, there is
\begin{align}
\nonumber &\int_{F_v^\times} W_{f_{1,v},f_{2,v},\psi_v}(a_v)\lambda(a_v)da_v\\
\nonumber =&\int_{F_v} (\pi_v(n_v)f_{1,v},f_{2,v})\widehat{\lambda}(-n_v)dn_v\\
\nonumber =&\frac{\zeta_{F_v}(2)}{L(\frac{1}{2},\pi_v)}\int_{F_v}\int_{SL_{2}(F_v)}(\omega_v(g_v,n_v)\phi_{1,v},\phi_{2,v})\overline{(\sigma(g_v)\varphi_{1,v},\varphi_{2,v})}\widehat{\lambda}(-n_v)dg_vdn_v\\
\nonumber =&\frac{\zeta_{F_v}(2)}{L(\frac{1}{2},\pi_v)}\int_{\SL_2(F_v)}\overline{(\sigma_v(g_v)\varphi_{1,v},\varphi_{2,v})}\big(\int_{F_v}\omega_v(g_v,n_v)\phi_{1,v},\phi_{2,v})\widehat{\lambda}(-n_v)dn_v\big)dg_v\\
=&\frac{c_v^\prime\zeta_{F_v}(2)}{L(\frac{1}{2},\pi_v)}\int_{\SL_2(F_v)}\int_{F_v}\int_{N(F_v)\backslash \SL_2(F_v)}\overline{(\sigma_v(g_v)\varphi_{1,v},\varphi_{2,v})}\omega_v(g_v^\prime g_v)\widehat{\phi}_{1,v}(a_v;0,1) \label{wexpression1}\\
\nonumber &\hspace{5cm}\cdot\overline{\omega_v(g_v^\prime)\widehat{\phi}_{2,v}(a_v;0,1)}\lambda(-2a_v)d\dot{g}_v^\prime da_v dg_v. 
\end{align}
In the last equality, equation (\ref{proofequality1}) from Step 1 is used.

Now write $\dot{g}_v^\prime=\smalltwomatrix{b_v}{}{}{b_v^{-1}} k_{2,v}$ with $b_v\in F_v^\times$ and $k_{2,v}\in K_v$. Put $\leftup{k_{2,v}}{\widehat{\phi}}_{2,v}$ for $\omega_v(k_{2,v})\widehat{\phi}_{2,v}$. By making the change of variable $g_v=k_{2,v}^{-1}g_v$, one can rewrite equation (\ref{wexpression1}) as
\begin{align*}
&\int_{F_v^\times} W_{f_{1,v},f_{2,v},\psi_v}(a_v)\lambda(a_v)da_v=\frac{c_v^\prime\zeta_{F_v}(2)}{L(\frac{1}{2},\pi_v)}\int_{K_v}\int_{\SL_2(F_v)}\int_{F_v}\int_{F_v^\times}\overline{(\sigma_v(g_v)\varphi_{1,v},\leftup{k_{2,v}}{\varphi}_{2,v})}\\
&\hspace{2.5cm}\cdot \omega_v(g_v)\widehat{\phi}_{1,v}(a_vb_v ;0,b_v^{-1}) \overline{\leftup{k_{2,v}}{\widehat{\phi}}_{2,v}(a_vb_v ;0,b_v^{-1})}\lambda(-2a_v) |b_v|^{-1}d^\ast b_v da_v dg_vdk_{2,v}.
\end{align*}
The key observation is that the innermost integration over $b_v\in F_v^\times$ yields an $L^1$-function of $(g_v, a_v)$ over $\SL_2(F_v)\times F_v$, as indicated below. Thus, one can change the order of integration of $g_v$ and $a_v$.

\begin{claim*}
The function below is integrable over $(g_v,a_v)\in \SL_2(F_v)\times F_v$,
\[
\beta(g_v,a_v)=\overline{(\sigma_v(g_v)\varphi_{1,v},\varphi_{2,v})}\lambda(-2a_v)\int_{
F_v^\times}\omega_v(g_v)\widehat{\phi}_{1,v}(a_vb_v;0,b_v^{-1}) \overline{\widehat{\phi}_{2,v}(a_vb_v;0,b_v^{-1})} |b_v|^{-1}d^\ast b_v.
\]
\end{claim*}
To verify the claim, write $g_v=\smalltwomatrix{1}{x_v}{}{1}\smalltwomatrix{t_v}{}{}{t_v^{-1}}k_v$, then the $F_v^\times$-integral above is
\begin{equation}\label{l1function}
\int_{
F_v^\times}\omega_v(k_v)\widehat{\phi}_{1,v}(t_va_vb_v;0,b_v^{-1}t_v^{-1}) \overline{\widehat{\phi}_{2,v}(a_vb_v;0,b_v^{-1})} \psi_v(x_va_v^2b_v^2)\chi_{\psi_v}(t_v)|t_v|^{-3/2}|b_v|^{-1}d^\ast b_v.
\end{equation}
We distinguish two cases:
\begin{itemize}
\item[(i)] $F_v$ is nonarchimedean. In this situation, $\beta(g_v, a_v)$ can be shown to be compactly supported. Actually, when $\beta(g_v, a_v)$ is nonzero, the variable $a_v$ can only vary in a compact subset of $F_v^\times$ because $\lambda(\cdot)$ is compactly supported in $F_v^\times$. Furthermore, with the occurrence of $\widehat{\phi}_{2,v}(a_vb_v;0,b_v^{-1})$ in (\ref{l1function}), the variable $b_v$ is forced to vary in a compact subset of $F_v^\times$ too; accordingly, with the occurrence of $\omega_v(k_v)\widehat{\phi}_{1,v}(t_va_vb_v;0,b_v^{-1}t_v^{-1})$ in (\ref{l1function}) and the fact that $\phi_{1,v}$ is $K_v$-finite, one sees that the variable $t_v$ can only vary in a compact subset of $F_v^\times$.

With these considerations, we now check the possible range of the variable $x_v$. Observe that there exists $m>0$ such that as a function of $b_v$,
\[
\omega_v(k_v)\widehat{\phi}_{1,v}(t_va_vb_v;0,b_v^{-1}t_v^{-1}) \overline{\widehat{\phi}_{2,v}(a_vb_v;0,b_v^{-1})}
\]
is constant on each coset of $1+\varpi^m\mo_{F_v}^\times$; this means that for the integrand  in (\ref{l1function}), over each coset of $1+\varpi^m\mo_{F_v}^\times$, only the factor $\psi_v(x_va_v^2b_v^2)$ varies. So there eixsts $m^\prime$ such that when $x_v\not\in \varpi^{m^\prime}\mo_k$, the integrand in (\ref{l1function}) has a zero integral on each coset of $1+\varpi^m\mo_{F_v}^\times$. Thus, the expression in (\ref{l1function}) and the function $\beta(g_v, a_v)$ vanish when $x_v$ is not in the compact subset $\varpi^{m^\prime}\mo_{F_v}$. Therefore, when $\beta(g_v, a_v)$ is nonzero, each of $a_v, k_v, t_v, x_v$ can only vary in a compact subset.

\item[(ii)] $F_v$ is archimedean. First, for the smooth function $\beta(g_v, a_v)$ to be nonzero, it is obvious that $a_v$ can only vary in a compact subset of $F_v^\times$. One can similarly argue that the expression in (\ref{l1function}) is rapidly decreasing when $(x_v, t_v)\rar (\infty,\infty)$ or $(\infty, 0)$. Thus, $\beta(g_v, a_v)$ is integrable.\\
\end{itemize}

We continue the argument before the claim. Changing the order of $g_v$ and $a_v$ leads to
\begin{align*}
&\int_{F_v^\times} W_{f_{1,v},f_{2,v},\psi_v}(a_v)\lambda(a_v)da_v=\frac{c_v^\prime\zeta_{F_v}(2)}{L(\frac{1}{2},\pi_v)}\int_{F_v}\int_{K_v}\int_{\SL_2(F_v)}\int_{F_v^\times}\overline{(\sigma_v(g_v)\varphi_{1,v},\leftup{k_{2,v}}{\varphi}_{2,v})}\\
&\hspace{2cm}\cdot \omega_v(g_v)\widehat{\phi}_{1,v}(a_vb_v;0,b_v^{-1}) \overline{\leftup{k_{2,v}}{\widehat{\phi}}_{2,v}(a_vb_v;0,b_v^{-1})}\lambda(-2a_v) |b_v|^{-1}d^\ast b_v dg_v k_{2,v}da_v.
\end{align*}
Since $\lambda(a_v)$ is arbitrary in $C^\infty_c(F_v^\times)$, it follows that for $a_v\in F_v^\times$,
\begin{align}\label{wfunction}
W_{f_{1,v},f_{2,v},\psi_v}(a_v)=&
\frac{c_v^\prime\zeta_{F_v}(2)}{|2|_vL(\frac{1}{2},\pi_v)}\int_{K_v}\int_{\SL_2(F_v)}\int_{F_v^\times}\overline{(\sigma_v(g_v)\varphi_{1,v},\leftup{k_{2,v}}{\varphi}_{2,v})}\\
\nonumber &\cdot \omega_v(g_v)\widehat{\phi}_{1,v}(-\frac{a_vb_v}{2};0,b_v^{-1}) \overline{\leftup{k_{2,v}}{\widehat{\phi}}_{2,v}(-\frac{a_vb_v}{2};0,b_v^{-1})}|b_v|^{-1}d^\ast b_v dg_v k_{2,v}.
\end{align}

\noindent \emph{Step 3.} Now set $a_v=-2$ in (\ref{wfunction}) and decompose $g_v=n_v\underline{a_v}k_{1,v}$ (with a new $a_v$). There is
\[
W_{f_{1,v},f_{2,v},\psi_v}(-2)=\frac{c_v^\prime\zeta_{F_v}(2)}{|2|_vL(\frac{1}{2},\pi_v)}\int_{K_v\times K_v}\int_{F_v^\times}I_v(\underline{a_v}\circ \leftup{k_{1,v}}{\varphi_{1,v}},\leftup{k_{2,v}}{\varphi_{2,v}}, \underline{a_v}\circ \leftup{k_{1,v}}{\phi_{1,v}},\leftup{k_{2,v}}{\phi_{2,v}})|a_v|^{-2}d^\ast a_v,
\]
with
\[
I_v(\varphi_{1,v},\varphi_{2,v},\phi_{1,v},\phi_{2,v}):=\int_{F_v}\int_{F_v^\times}\overline{(n_v\circ \varphi_{1,v},\varphi_{2,v})}\cdot\omega_v(n_v)\widehat{\phi}_{1,v}(b_v;0,b_v^{-1})\overline{\widehat{\phi}_{2,v}(b_v;0,b_v^{-1})}|b_v|^{-1}d^\ast b_v dn_v.
\]

Note that
\[
\int_{F_v^\times}\omega_v(n_v)\widehat{\phi}_{1,v}(b_v;0,b_v^{-1})\overline{\widehat{\phi}_{2,v}(b_v;0,b_v^{-1})}|b_v|^{-1}d^\ast b_v=\mf_2 \big[\widehat{\phi}_{1,v}(b_v;0,b_v^{-1})\overline{\widehat{\phi}_{2,v}(b_v;0,b_v^{-1})}|b_v|^{-2}\cdot \frac{d^\ast b_v}{d^\times b_v}\big].
\]
So by Lemma ~\ref{identity-mbintegral}, the expression $I_v(-)$ can be written as
\[
I_v(\varphi_{1,v},\varphi_{2,v},\phi_{1,v},\phi_{2,v})=c_{\sigma_v,\psi_v}\int_{F_v^\times} \overline{W_{\varphi_{1,v},\psi_v}}W_{\varphi_{2,v},\psi_v}(\underline{b_v})\widehat{\phi}_{1,v}(b_v;0,b_v^{-1})\overline{\widehat{\phi}_{2,v}(b_v;0,b_v^{-1})}|b_v|^{-3}d^\ast b_v.
\]
It follows that (by applying the change of variable $a_v\rar a_vb_v^{-1}$ in the second line below, there is)
\begin{align*}
&c_{\sigma_v,\psi_v}^{-1}\cdot \int_{F_v^\times}I_v(a_v\circ \varphi_{1,v},\varphi_{2,v},a_v\circ \phi_{1,v},\phi_{2,v})|a_v|^{-2}d^\ast a_v\\
=&\int_{F_v^\times} \overline{W_{\varphi_{1,v},\psi_v}(\underline{a_vb_v})}W_{\varphi_{2,v},\psi_v}(\underline{b_v})\,\, \widehat{\phi}_{1,v}(a_vb_v;0,(a_vb_v)^{-1})\overline{\widehat{\phi}_{2,v}(b_v;0,b_v^{-1})}\\
&\hspace{5cm} \cdot \chi_{\psi_v}(a_v)<a_v,b_v>|a_v|^{-\frac{3}{2}}|b_v|^{-3}d^\ast a_vd^\ast b_v\\
=&\int_{F_v^\times}\overline{W_{\varphi_{1,v},\psi_v}(\underline{a_v})}\omega_v(\underline{a_v})\widehat{\phi}_{1,v}(1;0,1)|a_v|^{-2}d^\ast a_v\cdot \int_{F_v^\times}\overline{W_{\varphi_{1,v},\psi_v}(\underline{b_v})}\omega_v(\underline{b_v})\widehat{\phi}_{1,v}(1;0,1)|b_v|^{-2}d^\ast b_v
\end{align*}

Therefore $W_{f_{1,v},f_{2,v},\psi_v}(-2)=\frac{c_v^\prime c_{\sigma_v,\psi_v}\zeta_{F_v}(2)}{|2|_vL(\frac{1}{2},\pi_v)}J_v(\varphi_{1,v},\phi_{1,v})\overline{J_v(\varphi_{2,v},\phi_{2,v})}$ and we accordingly have
\begin{align*}
\ml_{\pi_v,\psi_{-2,v}}^\sharp(f_{1,v},f_{2,v})=&\frac{L(1,\pi_v,\ad)}{\zeta_{F_v}(2)}W_{f_{1,v},f_{2,v},\psi_v}(-2)=\frac{c_v^\prime c_{\sigma_v,\psi_v}L(1,\pi_v,\ad)}{|2|_vL(\frac{1}{2},\pi_v)}J_v(\varphi_{1,v},\phi_{1,v})\overline{J_v(\varphi_{2,v},\phi_{2,v})}.
\end{align*}
\end{proof}

\begin{proposition}\label{csigma}
$c_{\sigma}=c_\pi$, hence $c_{\sigma}=\frac{1}{2}$.
\end{proposition}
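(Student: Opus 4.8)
The plan is to evaluate $\ell_{\pi,\psi_{-2}}(f_1)\overline{\ell_{\pi,\psi_{-2}}(f_2)}$ on theta lifts $f_i=\Theta(\phi_i,\varphi_i)$ ($i=1,2$) in two ways and match the answers. Since $\pi=\Theta(\sigma,\psi)$ is cuspidal on $PGL_2(\ba)$ it is globally generic, so $\ell_{\pi,\psi_{-2}}\neq 0$ on $\pi$; I would fix decomposable data $\phi_i=\otimes_v\phi_{i,v}$, $\varphi_i=\otimes_v\varphi_{i,v}$ for which $\ell_{\pi,\psi_{-2}}(\Theta(\phi_i,\varphi_i))\neq 0$. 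For the first evaluation I would apply Lemma~\ref{transfer_global}: since $\ell_\psi=\prod_v\ell_{\psi_v}$, the Weil representation is a restricted tensor product, the partial Fourier transform is built from local ones, and $SL_2(\ba)$ carries the Tamagawa measure, the integral there factors over the places, giving $\ell_{\pi,\psi_{-2}}(\Theta(\phi_i,\varphi_i))=\prod_v J_v(\varphi_{i,v},\phi_{i,v})$ with $J_v$ as in Lemma~\ref{transfer_local}; in particular every $J_v(\varphi_{i,v},\phi_{i,v})\neq 0$.

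For the second evaluation, set $f_{i,v}=\theta_v(\phi_{i,v},\varphi_{i,v})$, so $\Theta(\phi_i,\varphi_i)=c_{\sigma,\pi}\,\otimes_v f_{i,v}$ is a decomposable vector of $\pi$ with local components proportional to the $f_{i,v}$. Applying the global relation of section~\ref{gl2_pwf} with $\psi_{-2}$ in place of $\psi$, together with the sesquilinearity of $\ml^\sharp$, yields
\[
\ell_{\pi,\psi_{-2}}(f_1)\overline{\ell_{\pi,\psi_{-2}}(f_2)}=c_\pi\,|c_{\sigma,\pi}|^2\,\frac{\zeta_F(2)}{L(1,\pi,\ad)}\prod_v\ml^\sharp_{\pi_v,\psi_{-2,v}}(f_{1,v},f_{2,v}),
\]
and Lemma~\ref{transfer_local} rewrites each local factor as $\tfrac{c_v^\prime c_{\sigma_v,\psi_v}L(1,\pi_v,\ad)}{|2|_vL(\frac12,\pi_v)}J_v(\varphi_{1,v},\phi_{1,v})\overline{J_v(\varphi_{2,v},\phi_{2,v})}$. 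Equating with the first evaluation and cancelling the non-zero quantity $\prod_v J_v(\varphi_{1,v},\phi_{1,v})\overline{J_v(\varphi_{2,v},\phi_{2,v})}$ leaves an identity purely among global and local constants.

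It then remains to collapse that identity. Substituting $|c_{\sigma,\pi}|^2=L(\frac12,\pi)/\zeta_F(2)$, using $\prod_v c_v^\prime=1$, $\prod_v|2|_v=1$ (the product formula), $\prod_v\zeta_{F_v}(2)=\zeta_F(2)$, and the definition $c_\sigma=\frac{L(1,\pi,\ad)}{L(\frac12,\pi)\zeta_F(2)}\prod_v\frac{L(\frac12,\pi_v)\zeta_{F_v}(2)}{c_{\sigma_v,\psi_v}L(1,\pi_v,\ad)}$ from section~\ref{sl2_gwf}, the identity reduces to $1=c_\pi/c_\sigma$, i.e.\ $c_\sigma=c_\pi$; since Proposition~\ref{compare_gl2ip} (which holds verbatim for the character $\psi_{-2}$) gives $c_\pi=1$, we get $c_\sigma=1$.

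The main obstacle is bookkeeping rather than substance: one must keep all local normalizations consistent (the factorizations $\ell_\psi=\prod_v\ell_{\psi_v}$, $(,)=\prod_v(,)_v$, the spherical normalizations, and the measure constants $c_v^\prime$), check that the global integral in Lemma~\ref{transfer_global} genuinely becomes an Euler product with $J_v=1$ at almost all places, and verify that each Euler product entering the comparison has all but finitely many factors equal to $1$, so that the regrouping producing $\zeta_F(2)$ and eliminating $\prod_v|2|_v$ is legitimate. The $\psi$ versus $\psi_{-2}$ discrepancy between the $\wtilde{SL}_2$- and $PGL_2$-sides must also be tracked with care.
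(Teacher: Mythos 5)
Your proposal is correct and follows essentially the same route as the paper: evaluate $\ell_{\pi,\psi_{-2}}(f_1)\overline{\ell_{\pi,\psi_{-2}}(f_2)}$ on theta lifts once via Lemma~\ref{transfer_global} (Euler product of the $J_v$) and once via the factorization $f_i=c_{\sigma,\pi}\otimes_v\theta_v(\phi_{i,v},\varphi_{i,v})$ together with Lemma~\ref{transfer_local} and $|c_{\sigma,\pi}|^2=L(\frac12,\pi)/\zeta_F(2)$, then collapse the constants using $\prod_v c_v'=1$, $\prod_v|2|_v=1$ and the definition of $c_\sigma$ to get $c_\sigma=c_\pi$, and invoke Proposition~\ref{compare_gl2ip} for $c_\pi=1$. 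Your explicit choice of data with $\ell_{\pi,\psi_{-2}}(f_i)\neq 0$ (so the product of $J_v$'s can be cancelled) and your attention to the $\psi$ versus $\psi_{-2}$ normalization are points the paper leaves implicit, but they do not change the argument.
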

\begin{proof}
Consider decomposable vectors $\varphi_i=\otimes \varphi_{i,v}\in \sigma$ and $\phi_i=\otimes \phi_{i,v}\in \mathrm{S}(V_\ba)$, $i=1,2$. Put $f_i=\Theta(\varphi_i,\phi_i)=\otimes_v f_{i,v}$. Also set $f_{i,v}^\prime=\theta_v(\varphi_{i,v},\phi_{i,v})$, then by (\ref{decomposetheta}), $f_i=c_{\sigma,\pi}(\otimes f_{i,v}^\prime)$.

On one hand, by lemma ~\ref{transfer_global}, there is
\[
\ell_{\pi,\psi_{-2}}(f_1)\overline{\ell_{\pi,\psi_{-2}}(f_2)}=\prod_v J_v(\varphi_{1,v},\phi_{1,v})\overline{J_v(\varphi_{2,v},\phi_{2,v})}.
\]
On the other hand, we can apply lemma ~\ref{transfer_local}, the fact $\prod_v c_v^\prime=1$, and Proposition ~\ref{sl2wpfa} to get
\begin{align*}
\prod_v \ml_{\pi_v,\psi_{-2,v}}^\sharp(f_{1,v},f_{2,v})
=&|c_{\sigma,\pi}|^2\prod_v \ml_{\pi_v,\psi_{-2,v}}^\sharp(f_{1,v}^\prime,f_{2,v}^\prime)\\
=&\frac{L(\frac{1}{2},\pi)}{\zeta_F(2)}\prod_v \frac{c_v^\prime c_{\sigma_v,\psi_v}L(1,\pi_v,\ad)}{|2|_vL(\frac{1}{2},\pi_v)}J_v(\varphi_{1,v},\phi_{1,v})\overline{J_v(\varphi_{2,v},\phi_{2,v})}\\
=&L(\frac{1}{2},\pi)\prod_v \frac{c_{\sigma_v,\psi_v}L(1,\pi_v,\ad)}{\zeta_{F_v}(2)L(\frac{1}{2},\pi_v)}J_v(\varphi_{1,v},\phi_{1,v})\overline{J_v(\varphi_{2,v},\phi_{2,v})}\\
=&\frac{1}{c_\sigma}\frac{L(1,\pi,\ad)}{\zeta_F(2)}\prod_v J_v(\varphi_{1,v},\phi_{1,v})\overline{J_v(\varphi_{2,v},\phi_{2,v})}.
\end{align*}
So $\ell_{\pi,\psi_{-2}}(f_1)\overline{\ell_{\pi,\psi_{-2}}(f_2)}=c_{\sigma}\cdot \frac{\zeta_F(2)}{L(1,\pi,\ad)} \prod_v \ml_{\pi_v,\psi_{-2,v}}^\sharp(f_{1,v},f_{2,v})$. Therefore $c_\sigma=c_\pi$.
\end{proof}

\begin{theorem}
Suppose $\sigma\subset \ma_{00}(\wtilde{\SL}_2)$ and $\ell_\psi$ is non-vanishing on $\sigma$. Write $\pi=\Theta_\psi(\sigma,\psi)$, then
\[
\ell_\psi\otimes \overline{\ell_\psi}=2^{-1}\cdot \frac{L(\frac{1}{2},\pi)\zeta_F(2)}{L(1,\pi,\ad)}\prod_v\ml_{\psi_v}^\sharp.
\]
\end{theorem}

\def\cprime{$'$} \def\cprime{$'$}

\end{document}